\def\YEAR{\year}\newcount\VOL\VOL=\YEAR\advance\VOL by-1995
\def\firstpage{1}\def\lastpage{1000}
\def\received{}\def\revised{}
\def\communicated{}
\def\magnification{\afterassignment\m@g\count@}
\def\m@g{\mag=\count@\hsize6.5truein\vsize8.9truein\dimen\footins8truein}
\font\eightrm=cmr8
\font\caps=cmcsc10                    
\font\Caps=cmcsc10 scaled \magstep1   
\font\scaps=cmcsc8
\def\DocMath{{\def\th{\thinspace}\scaps Documenta Math.}}
\renewcommand{\@oddfoot}{\hfill\scaps Documenta Mathematica
    \number\VOL\  (\number\YEAR) \number\firstpage--\lastpage\hfill}
\renewcommand{\@evenfoot}{\ifnum\thepage>\lastpage\hfill\scaps
    Documenta Mathematica \number\VOL\  (\number\YEAR)\hfill\else\@oddfoot\fi}%
\renewcommand{\@evenhead}{%
    \ifnum\thepage>\lastpage\rlap{\thepage}\hfill%
    \else\rlap{\thepage}\slshape\leftmark\hfill{\caps\SAuthor}\hfill\fi}%
\renewcommand{\@oddhead}{%
    \ifnum\thepage=\firstpage{\DocMath\hfill\llap{\thepage}}%
    \else{\slshape\rightmark}\hfill{\caps\STitle}\hfill\llap{\thepage}\fi}%
\def\TSkip{\bigskip}
\newbox\TheTitle{\obeylines\gdef\GetTitle #1
\ShortTitle  #2
\SubTitle    #3
\Author      #4
\ShortAuthor #5
\EndTitle
{\setbox\TheTitle=\vbox{\baselineskip=20pt\let\par=\cr\obeylines%
\halign{\centerline{\Caps##}\cr\noalign{\medskip}\cr#1\cr}}%
	\copy\TheTitle\TSkip\TSkip%
\def\next{#2}\ifx\next\empty\gdef\STitle{#1}\else\gdef\STitle{#2}\fi%
\def\next{#3}\ifx\next\empty%
    \else\setbox\TheTitle=\vbox{\baselineskip=20pt\let\par=\cr\obeylines%
    \halign{\centerline{\caps##} #3\cr}}\copy\TheTitle\TSkip\TSkip\fi%
\centerline{\caps #4}\TSkip\TSkip%
\def\next{#5}\ifx\next\empty\gdef\SAuthor{#4}\else\gdef\SAuthor{#5}\fi%
\ifx\received\empty\relax
    \else\centerline{\eightrm Received: \received}\fi%
\ifx\revised\empty\TSkip%
    \else\centerline{\eightrm Revised: \revised}\TSkip\fi%
\ifx\communicated\empty\relax
    \else\centerline{\eightrm Communicated by \communicated}\fi\TSkip\TSkip%
\catcode'015=5}}\def\Title{\obeylines\GetTitle}
\def\Abstract{\begingroup\narrower
    \parskip=\medskipamount\parindent=0pt{\caps Abstract. }}
\def\EndAbstract{\par\endgroup\TSkip}
\long\def\MSC#1\EndMSC{\def\arg{#1}\ifx\arg\empty\relax\else
     {\par\narrower\noindent%
     2010 Mathematics Subject Classification: #1\par}\fi}
\long\def\KEY#1\EndKEY{\def\arg{#1}\ifx\arg\empty\relax\else
	{\par\narrower\noindent Keywords and Phrases: #1\par}\fi\TSkip}
\newbox\TheAdd\def\Addresses{\vfill\copy\TheAdd\vfill
    \ifodd\number\lastpage\vfill\eject\phantom{.}\vfill\eject\fi}
{\obeylines\gdef\GetAddress #1
\Address #2
\Address #3
\Address #4
\EndAddress
{\def\xs{4.3truecm}\parindent=0pt
\setbox0=\vtop{{\obeylines\hsize=\xs#1\par}}\def\next{#2}
\ifx\next\empty 
     \setbox\TheAdd=\hbox to\hsize{\hfill\copy0\hfill}
\else\setbox1=\vtop{{\obeylines\hsize=\xs#2\par}}\def\next{#3}
\ifx\next\empty 
     \setbox\TheAdd=\hbox to\hsize{\hfill\copy0\hfill\copy1\hfill}
\else\setbox2=\vtop{{\obeylines\hsize=\xs#3\par}}\def\next{#4}
\ifx\next\empty\ 
     \setbox\TheAdd=\vtop{\hbox to\hsize{\hfill\copy0\hfill\copy1\hfill}
                \vskip20pt\hbox to\hsize{\hfill\copy2\hfill}}
\else\setbox3=\vtop{{\obeylines\hsize=\xs#4\par}}
     \setbox\TheAdd=\vtop{\hbox to\hsize{\hfill\copy0\hfill\copy1\hfill}
	        \vskip20pt\hbox to\hsize{\hfill\copy2\hfill\copy3\hfill}}
\fi\fi\fi\catcode'015=5}}\gdef\Address{\obeylines\GetAddress}
\def\LOCAL{\jobname.files}
\newcommand{\Hom}{{\rm Hom}}
\newcommand{\Tr}{{\rm Tr}}
\newcommand{\cO}{ {\mathcal{O}}}
\newcommand{\fm}{{\mathfrak{m}}}
\newcommand{\fp}{{\mathfrak{p}}}
\newcommand{\fq}{{\mathfrak{q}}}
\renewcommand{\AA}{{\mathbb A}}
\newcommand{\CC}{{\mathbb C}}
\newcommand{\FF}{{\mathbb{F}}}
\newcommand{\GG}{{\mathbb G}}
\newcommand{\NN}{{\mathbb N}}
\newcommand{\PP}{{\mathbb P}}
\newcommand{\QQ}{{\mathbb Q}}
\newcommand{\RR}{{\mathbb R}}
\newcommand{\ZZ}{{\mathbb Z}}
\newcommand{\be}{\begin{equation}}
\newcommand{\ee}{\end{equation}}
\newtheorem{thm}{Theorem}[section]
\newtheorem*{thm*}{Theorem}
\newtheorem{prop}[thm]{Proposition}
\newtheorem{cor}[thm]{Corollary}
\newtheorem{lem}[thm]{Lemma}
\newtheorem{rmk}[thm]{Remark}
\newtheorem{problem}[thm]{Problem}
\newtheorem{definition}[thm]{Definition}
\begin{document}

\Title
Galois-Module Theory for Wildly Ramified\\
Covers of Curves over Finite Fields\\
\ShortTitle
Galois-Module Theory
\SubTitle

\Author
Helena Fischbacher-Weitz and Bernhard K\"ock
\ShortAuthor
\EndTitle
\vspace*{-5em}
\begin{center}
With an Appendix by \\ \vspace*{0.3em}
{\sc Bernhard K\"ock and Adriano Marmora}
\end{center}
\vspace*{5em}
\Abstract
Given a Galois cover of curves over $\FF_p$, we relate the $p$-adic valuation of epsilon constants appearing in functional equations of Artin L-functions to an equivariant Euler characteristic. Our main theorem generalises a result of Chinburg from the tamely to the weakly ramified case. We furthermore apply Chinburg's result to obtain a `weak' relation in the general case. In the Appendix, we study, in this arbitrarily wildly ramified case, the integrality of $p$-adic valuations of epsilon constants.
\EndAbstract
\MSC
11R58 (Primary) 14G10, 14G15, 11R33, 14H30 (Secondary)
\EndMSC
\KEY
Galois cover of curves; weakly ramified; epsilon constant; equivariant Euler characteristic.
\EndKEY
\Address
Helena Fischbacher-Weitz\\ Mathematical Sciences\\ University of Southampton\\ Southampton SO17 1BJ\\ United Kingdom\\ hfw091011@gmail.com
\Address
Bernhard K\"ock\\ Mathematical Sciences\\ University of Southampton\\ Southampton SO17 1BJ\\ United Kingdom\\ b.koeck@soton.ac.uk
\Address
Adriano Marmora
Institut de Recherche
Math\'ematique Avanc\'ee
Universit\'e de Strasbourg
7 rue Ren\'e Descartes
67084 Strasbourg
France
marmora@math.unistra.fr
\Address
\EndAddress

\section*{Introduction}

Relating global invariants to invariants that are created from local data is a fundamental and widely studied topic in number theory. In this paper, we will study an equivariant version of this local-global principle in the context of curves over finite fields. More precisely, our goal is to relate epsilon constants appearing in functional equations of Artin L-functions to an equivariant Euler characteristic of the underlying curve.

The exact setup considered in this paper is as follows. Let $X$ be an irreducible smooth projective curve over $\FF_p$ and let $G$ be a finite subgroup of $\mathrm{Aut}(X/k)$ where $k$ denotes the algebraic closure of $\FF_p$ in the function field of~$X$.

We begin by defining the two invariants $\chi(G, \bar{X}, \cO_{\bar{X}})$ and $E(G,X)$ that we are going to put in relation with each other.

First, the equivariant Euler characteristic of $\bar{X} := X \times_{\FF_p} \bar{\FF}_p$ is the element
\[\chi(G,\bar{X}, \cO_{\bar{X}}) := \left[H^0(\bar{X}, \cO_{\bar{X}})\right] -\left[H^1(\bar{X}, \cO_{\bar{X}})\right]\]
considered as an element of the Grothendieck group $K_0(G, \bar{\FF}_p)$ of all finitely generated modules over the group ring $\bar{\FF}_p[G]$. We recall that, if $k=\FF_p$, then $H^0(\bar{X}, \cO_{\bar{X}})$ is the trivial representation $\bar{\FF}_p$ and $H^1(\bar{X},\cO_{\bar{X}})$ is isomorphic to the dual $H^0(\bar{X}, \Omega_{\bar{X}})^*$ of the so-called {\em canonical representation} of $G$ on the space $H^0(\bar{X},\Omega_{\bar{X}})$ of global holomorphic differentials on $\bar{X}$.

Second, for any finite-dimensional complex representation $V$ of $G$, we consider the epsilon constant $\varepsilon(V)$ that appears in the functional equation of the Artin L-function associated with $G$, $X$ and the dual representation $V^*$ (see~(\ref{FunctionalEquation})). It is known that $\varepsilon(V) \in \bar{\QQ}$. We denote the standard $p$-adic valuation on $\bar{\QQ}_p^\times$ by $v_p$, we fix a field embedding $j_p: \bar{\QQ} \hookrightarrow \bar{\QQ}_p$, we identify the classical ring $K_0(\CC[G])$ of virtual complex representations with $K_0(\bar{\QQ}[G])$ and we let $j_p$ also denote the isomorphism $K_0(\bar{\QQ}[G]) \;\; \tilde{\rightarrow} \;\; K_0(\bar{\QQ}_p[G])$ induced by $j_p$. We then define the natural element $E(G,X) \in K_0(\bar{\QQ}_p[G])_\QQ := K_0(\bar{\QQ}_p[G])\otimes \QQ$ by the equations
\[\langle E(G,X), j_p(V) \rangle = -v_p(j_p(\varepsilon(V))), \quad \textrm{ for } V \textrm{ as above},\]
where $\langle \hspace{1em}, \hspace{1em} \rangle $ denotes the classical perfect (character) pairing on $K_0(\bar{\QQ}_p[G])$. We call $E(G,X)$ the {\em (virtual) epsilon representation associated with the action of~$G$ on $X$}. We point out (see Section~\ref{GeneralFormula}) that the definition of $E(G,X)$ does not depend on~$j_p$ and that it is compatible with restriction to subgroups of~$G$.

In Section~\ref{GeneralFormula} (which from the logical point of view does not depend on the earlier sections), we will prove the following general relation between  the {\em global} invariant $\chi(G,\bar{X},\cO_{\bar{X}})$ and the invariant $E(G,X)$ which is determined by {\em local} data via the Euler product definition of the Artin L-function. Let $d:K_0(\bar{\QQ}_p[G]) \rightarrow K_0(G, \bar{\FF}_p)$ denote the (surjective) decomposition map from classical modular representation theory.

\begin{thm*}[`Weak' Formula]
We have
\[d(E(G,X)) = \chi(G,\bar{X},\cO_{\bar{X}}) \quad \textrm{in } \quad K_0(G,\bar{\FF}_p)_\QQ.\]
\end{thm*}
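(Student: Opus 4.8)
The plan is to pass to Brauer characters and compare the two sides conjugacy-class by conjugacy-class. The Brauer character map embeds $K_0(G,\bar\FF_p)$ into the space of class functions on the $p$-regular conjugacy classes of $G$, compatibly with $-\otimes\QQ$, and it carries the decomposition map $d$ to the operation of restricting an ordinary virtual character to the $p$-regular elements. Writing $E(G,X)=\sum_i a_i[V_i]$ over the irreducible $\bar\QQ_p[G]$-modules, with $a_i=-v_p(j_p(\varepsilon(V_i^*)))\in\QQ$, it therefore suffices to prove, for each $p$-regular $g\in G$, the scalar identity
\[
\sum_i a_i\,\chi_{V_i}(g)\;=\;\sum_i(-1)^i\,\Tr\!\big(g\mid H^i(\bar X,\cO_{\bar X})\big)
\qquad(g\in G\ \text{$p$-regular}),
\]
where the right-hand side is read in $\overline\QQ_p$ via the chosen embedding (legitimate, since the eigenvalues of a $p$-regular $g$ on $H^i(\bar X,\cO_{\bar X})$ are roots of unity of order prime to $p$).

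I would first dispose of $g=1$. The right-hand side is then $\sum_i(-1)^i\dim_{\bar\FF_p}H^i(\bar X,\cO_{\bar X})=[k:\FF_p](1-g_{X/k})$, while the left-hand side equals $\sum_i a_i\dim V_i=-v_p(j_p(\varepsilon(\mathrm{reg}_G)))$ because $\sum_i(\dim V_i)[V_i^*]=\mathrm{reg}_G$ and $\varepsilon$ is additive; by the inductivity of $\varepsilon$-constants this is $-v_p$ of the $\varepsilon$-constant of the zeta function of $X$, which the functional equation of that zeta function also evaluates to $[k:\FF_p](1-g_{X/k})$. For $g\neq1$, the right-hand side is computed by the holomorphic Lefschetz fixed-point formula for $\cO_{\bar X}$, applicable precisely because $g$ is $p$-regular: the quotient $\bar X\to\bar X/\langle g\rangle$ is tamely ramified, and $g$ acts faithfully on each cotangent space $\fm_x/\fm_x^2$ at a fixed point through a root of unity $\zeta_x\neq1$ of order prime to $p$. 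Thus $\sum_i(-1)^i\Tr(g\mid H^i(\bar X,\cO_{\bar X}))=\sum_{x\in\bar X^g}(1-\zeta_x^{-1})^{-1}$, and the theorem is reduced to the arithmetic identity
\[
-\sum_i v_p\big(j_p(\varepsilon(V_i^*))\big)\,\chi_{V_i}(g)\;=\;\sum_{x\in\bar X^g}\frac1{1-\zeta_x^{-1}}\qquad(1\neq g\ \text{$p$-regular}).
\]

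The arithmetic side carries the content. Put $Y:=X/G$. Using Frobenius reciprocity together with the inductivity of Artin $L$-functions and $\varepsilon$-constants, and Brauer's theorem that $R_{\bar\QQ_p}(G)$ is spanned by characters induced from elementary subgroups, I would reduce the computation of $v_p(\varepsilon(V))$ to the case in which $V$ is induced from a one-dimensional character of the Galois group of an abelian subcover; there the $\varepsilon$-constant of the associated Artin $L$-function over $\FF_p$ is, up to the explicit power of $p$ coming from the conductor--discriminant formula and the genus of the quotient, a product of classical Gauss sums over the residue fields at the ramified places. The $p$-adic valuations of those Gauss sums are given by Stickelberger's theorem (equivalently the Gross--Koblitz digit-sum formula). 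Substituting this in and re-summing over the $G$-orbits of the fixed points $\bar X^g$ — using that the decomposition and inertia groups at a $p$-regular fixed point are cyclic, with a transparent description of the local ramification data — one recovers exactly the fixed-point sum $\sum_{x\in\bar X^g}(1-\zeta_x^{-1})^{-1}$, the ``trivial'' powers of $p$ cancelling against the genus and dimension terms. Matching the two sides finishes the proof.

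The main obstacle is clearly this last step: controlling the $p$-adic valuation of the (in general \emph{wildly} ramified) local $\varepsilon$-factors. For tame ramification this is the classical Gauss-sum/Stickelberger bookkeeping underlying Chinburg's work; in general one must invoke the full Deligne--Langlands formalism for local constants, normalising the auxiliary additive characters and Haar measures so that the spurious choices cancel in the global product, and then check that the wild part of the conductor feeds into both sides through exactly the same local terms once $d$ and $-\otimes\QQ$ have been applied. An alternative route — which I expect to cost roughly the same effort — would be to identify $\varepsilon(V)$ with $\det(-\mathrm{Frob}\mid H^\bullet_c(\bar Y,\mathcal{F}_V))$ for the $\ell$-adic sheaf $\mathcal{F}_V$ attached to $V$ and to read $v_p$ off its Newton slopes via the comparison between crystalline and coherent cohomology.
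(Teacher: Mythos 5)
Your reduction to Brauer characters at $p$-regular elements is a legitimate dual of the paper's argument, which instead pairs both sides of the formula against finitely generated projective $\bar\FF_p[G]$-modules and invokes the modular Artin induction theorem to reduce to cyclic $p'$-subgroups. Your observation that the holomorphic Lefschetz formula applies at a $p$-regular $g$ precisely because $\bar X\to\bar X/\langle g\rangle$ is tamely ramified is correct, and your treatment of $g=1$ via the genus and the zeta-function functional equation is fine. However, after making this observation you abandon its consequences and propose instead to control the $p$-adic valuation of the full set of (possibly wildly ramified) local $\varepsilon$-factors via Brauer induction from \emph{elementary} subgroups. This is where the proposal goes wrong: elementary subgroups may contain non-trivial $p$-subgroups, so Brauer induction does not remove wild ramification, and what you then need is essentially the content of the paper's `strong' formula, which the paper only establishes under the weak-ramification hypothesis (and which is precisely the hard part of the whole paper). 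You would be proving the easy theorem by way of the hard one, in a generality in which the hard one is still open.

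The missing step, and the step that makes the paper's proof short, is to note that evaluating a class function at a $p$-regular $g$ is the same as evaluating it after restriction to the cyclic subgroup $H:=\langle g\rangle$, which has order prime to $p$. By the compatibility of $E(G,X)$ with restriction (and of $\chi(G,\bar X,\cO_{\bar X})$ with restriction of the module structure), both sides only see the $H$-cover $X\to X/H$, which is \emph{automatically tamely ramified}. At that point one does not need to compute anything: Chinburg's tame theorem (or the special case of Theorem~\ref{FormulationMainTheorem}) gives $E(H,X)=e(\psi(H,\bar X))$, and the relation $d\circ e=c$ in the $cde$-triangle yields $d(E(H,X))=c(\psi(H,\bar X))=\chi(H,\bar X,\cO_{\bar X})$. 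In particular, neither Stickelberger's theorem nor the Deligne--Langlands local constants in the wild case, nor crystalline comparison, are needed; the entire arithmetic input is the already-established tame case. If you replace your Brauer-induction-and-Stickelberger step by this restriction argument, the rest of your write-up is a correct (if more verbose) variant of the paper's proof.
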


While we do not assume any condition on the type of ramification of the corresponding projection
\[\pi:X\rightarrow X/G =:Y\]
for this formula, it is only a `weak' formula in the sense that it does not describe $E(G,X)$ itself, but only its image in $K_0(G,\bar{\FF}_p)$, which for instance, if the order of $G$ is a power of $p$, captures only the rank of $E(G,X)$. The weakness of this formula may also be explained by recalling that two $\bar{\FF}_p[G]$-modules whose classes are equal in $K_0(G,\bar{\FF}_p)$ are not necessarily isomorphic but only have the same composition factors.

The main object of this paper is to establish a `strong' formula. For this, we assume that $\pi$ is {\em weakly ramified}, i.e.\ that the second ramification group~$G_{\fp,2}$ vanishes for all $\fp \in X$. We recall (see Theorem~1.2 on p.~4 in \cite{Pi}) that, by the Deuring-Shafarevic formula, this condition is always satisfied if $X$ is ordinary, i.~e.\ that this condition holds in a sense generically. In order to be able to precisely formulate our strong formula, we introduce the following notations.

Let $\bar{X}^\mathrm{w}$ denote the set of all points $P$ in $\bar{X}$ such that $\bar{\pi}$ is wildly ramified at~$P$. We furthermore define the subset~$\bar{Y}^\mathrm{w}:= \bar{\pi}(\bar{X}^\mathrm{w})$ of $\bar{Y}:= Y \times_{\FF_p} \bar{\FF}_p$ and the divisor $\bar{D}^\mathrm{w}:= - \sum_{P\in \bar{X}^\mathrm{w}} [P]$ on $\bar{X}$. By~\cite{Ko}, the equivariant Euler characteristic
$\chi(G,\bar{X}, \cO_{\bar{X}}(\bar{D}^\mathrm{w}))$ of~$\bar{X}$ with values in the invertible $G$-sheaf $\cO_{\bar{X}}(\bar{D}^\mathrm{w})$ is then equal to the image $c(\psi(G,\bar{X}))$ of a unique element $\psi(G,\bar{X})$ in the Grothendieck group $K_0(\bar{\FF}_p[G])$ of all finitely generated {\em projective} $\bar{\FF}_p[G]$-modules under the Cartan homomorphism $c: K_0(\bar{\FF}_p[G]) \rightarrow K_0(G,\bar{\FF}_p)$. Furthermore, for every $Q \in \bar{Y}$, we fix a point~$\tilde{Q}$ in the fibre $\bar{\pi}^{-1}(Q)$, we denote the decomposition group of $\bar{\pi}$ at~$\tilde{Q}$ by $G_{\tilde{Q}}$ and we denote the trivial representation of rank one by ${\bf 1}$. Finally, let $e:K_0(\bar{\FF}_p[G]) \rightarrow K_0(\bar{\QQ}_p[G])$ denote the third homomorphism from the classical $cde$-triangle.

The following relation between $\psi(G,\bar{X})$ and $E(G,X)$ is the main result of Section~\ref{MainTheoremSection} and of this paper.

\begin{thm*}[`Strong' formula] If $\pi$ is weakly ramified, we have
\begin{equation}\label{StrongFormula}
E(G,X) = e(\psi(G,\bar{X})) + \sum_{Q \in \bar{Y}^\mathrm{w}} \left[\mathrm{Ind}_{G_{\tilde{Q}}}^G ({\bf 1})\right] \quad \textrm{in} \quad K_0(\bar{\QQ}_p[G])_\QQ.
\end{equation}
In particular, $E(G,X)$ belongs to the integral part $K_0(\bar{\QQ}_p[G])$ of $K_0(\bar{\QQ}_p[G])_\QQ$.
\end{thm*}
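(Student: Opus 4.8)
The plan is to determine $E(G,X)$ from its defining property $\langle E(G,X),j_p(V)\rangle=-v_p(j_p(\varepsilon(V^*)))$ and to match it, representation by representation, with the right-hand side of~\eqref{StrongFormula}. The starting point is that the ``Weak Formula'' already gives~\eqref{StrongFormula} \emph{after applying} the decomposition map $d$: the short exact sequence $0\to\cO_{\bar{X}}(\bar{D}^{\mathrm{w}})\to\cO_{\bar{X}}\to\bigoplus_{P\in\bar{X}^{\mathrm{w}}}\bar{\FF}_p\to 0$, together with the fact that $\bar{X}^{\mathrm{w}}$ is a disjoint union of the $G$-orbits $G/G_{\tilde{Q}}$ ($Q\in\bar{Y}^{\mathrm{w}}$) on whose residue fields the decomposition groups $G_{\tilde{Q}}$ act trivially, yields
\[
\chi(G,\bar{X},\cO_{\bar{X}})=\chi(G,\bar{X},\cO_{\bar{X}}(\bar{D}^{\mathrm{w}}))+\sum_{Q\in\bar{Y}^{\mathrm{w}}}\bigl[\mathrm{Ind}_{G_{\tilde{Q}}}^{G}(\mathbf{1})\bigr]\quad\text{in }K_0(G,\bar{\FF}_p);
\]
combining this with~\cite{Ko}, with $c=d\circ e$, and with the Weak Formula shows that $d$ sends both sides of~\eqref{StrongFormula} to the same element. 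As $d$ is far from injective, the real content of~\eqref{StrongFormula} lies in the values $\langle E(G,X),j_p(V)\rangle$ attached to representations $V$ that detect $p$-singular conjugacy classes, and extracting these requires a genuine computation of the $p$-adic valuation of $\varepsilon(V^*)$.

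For that computation I would first reduce. By Brauer's induction theorem every virtual character of $G$ is a $\ZZ$-combination of characters induced from one-dimensional characters of subgroups, so --- using Frobenius reciprocity on the left and the compatibility of the right-hand side of~\eqref{StrongFormula} with restriction to a subgroup $H\le G$ --- it suffices to verify the pairing of~\eqref{StrongFormula} against one-dimensional $\theta$, for every $H$ (weak ramification being inherited by $\bar{X}\to\bar{X}/H$ since $H_{\fp,2}=H\cap G_{\fp,2}=0$). The restriction-compatibility of the right-hand side is itself a small lemma: $\psi(G,\bar{X})|_{H}$ and $\psi(H,\bar{X})$ differ only through the divisor $\bar{D}^{\mathrm{w}}$, which for $\bar{X}\to\bar{X}/G$ and for $\bar{X}\to\bar{X}/H$ disagrees precisely at points wild for $G$ but only tamely ramified for $H$; there the decomposition group for $H$ equals the inertia group, of order prime to $p$, so the corresponding $\bar{\FF}_p[H]$-permutation module is projective, and since $c$ is injective the resulting identity lifts along $e$ and combines with the double-coset decomposition of $\mathrm{Ind}_{G_{\tilde{Q}}}^{G}(\mathbf{1})|_{H}$ to give exactly the right-hand side of~\eqref{StrongFormula} for $\bar{X}\to\bar{X}/H$. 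It thus remains to evaluate $-v_p(\varepsilon_{\bar{X}\to\bar{X}/H}(\theta^{-1}))$ for one-dimensional $\theta$, which by inflation-invariance of $\varepsilon$-constants only involves the cyclic cover cut out by $\theta$; here weak ramification forces the local Artin conductor exponent of $\theta$ to be $1$ at tamely and $2$ at wildly ramified places.

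The heart of the matter is then the evaluation of $v_p(\varepsilon(\theta^{-1}))$. Using the functional equation of the Artin $L$-function $L(\theta^{-1},T)$ over $\FF_p$ --- equivalently, Deligne's interpretation of $\varepsilon(\theta^{-1})$ (up to sign) as a determinant of Frobenius on $\ell$-adic cohomology, which Laumon's product formula factors into local terms --- one writes $\varepsilon(\theta^{-1})$ as a power of $p$, governed through the conductor--discriminant formula by the Artin conductor of $\theta$ and the genus of $\bar{Y}$, times a product of local root numbers (Galois Gauss sums) over the ramified places. The power-of-$p$ part is matched with the $e(\psi(G,\bar{X}))$-term by relating the conductor exponents to the Riemann--Roch computation of $\chi(G,\bar{X},\cO_{\bar{X}}(\bar{D}^{\mathrm{w}}))$ from~\cite{Ko}; at tame places Stickelberger's theorem on $p$-adic valuations of Gauss sums contributes nothing beyond that, exactly as in Chinburg's treatment of the tame case; and at a place where $\theta$ is wildly ramified the local Gauss sum has conductor exponent $2$, and it is precisely the weak-ramification hypothesis that makes its $p$-adic valuation computable, the outcome being the extra summand $\bigl[\mathrm{Ind}_{G_{\tilde{Q}}}^{G}(\mathbf{1})\bigr]$ in~\eqref{StrongFormula}. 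I expect this last local computation to be the main obstacle: once one leaves the weakly ramified case the relevant Gauss sums acquire larger conductor and their valuations are no longer given by an elementary formula, which is the structural reason the ``Strong Formula'' is restricted to weakly ramified $\pi$. Finally, the displayed identity makes the ``in particular'' immediate: $\psi(G,\bar{X})\in K_0(\bar{\FF}_p[G])$ gives $e(\psi(G,\bar{X}))\in K_0(\bar{\QQ}_p[G])$, each $\bigl[\mathrm{Ind}_{G_{\tilde{Q}}}^{G}(\mathbf{1})\bigr]$ is integral, and hence so is $E(G,X)$.
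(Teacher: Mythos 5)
Your proposal follows essentially the same strategy as the paper's proof. Both reduce, via an induction theorem (you cite Brauer, the paper cites Artin; either works with $\QQ$-coefficients) combined with Frobenius reciprocity and Mackey's double-coset formula, to checking the pairing of both sides against a one-dimensional character $\chi$ of an abelian (cyclic) group; both establish the restriction-compatibility of the right-hand side by comparing $\mathrm{Res}^G_H\,\psi(G,\bar X)$ with $\psi(H,\bar X)$, noting that the discrepancy is a sum of projective permutation modules $\mathrm{Ind}^H_{H_{\tilde R}}(\mathbf 1)$ supported at points that are wild for $G$ but tame for $H$; and both then compare an explicit Deligne-style product formula for $v_p(\varepsilon(\chi))$ against an explicit formula for $\langle e(\psi(G,\bar X)),j_p\chi\rangle$ from~\cite{Ko}, with Stickelberger's formula mediating the Gauss-sum terms at tame places and the conductor-exponent-equals-two observation producing the extra $[\mathrm{Ind}^G_{G_{\tilde Q}}(\mathbf 1)]$ terms at wild places. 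Your extra reduction via inflation-invariance to the cyclic cover cut out by $\chi$ is an optional simplification the paper does not bother with, since Sections 2--3 are set up for general abelian $G$.

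Two places where your sketch glosses over something the paper has to work for. First, the final bookkeeping: the wild-place sum in the $\varepsilon$-constant formula runs over the places where \emph{$\chi$} is wildly ramified, whereas the wild-place sum in the Euler-characteristic formula (Corollary~\ref{FinalFormula}) runs over all of $Y^{\mathrm w}$; the discrepancy is exactly the places $\fq\in Y^{\mathrm w}$ with $\chi$ trivial on $G_{\tilde\fq,1}$, and the fact that this matches $\sum_{Q\in\bar Y^{\mathrm w}}\langle\mathbf 1,\mathrm{Res}^G_{G_{\tilde Q}}\chi\rangle$ hinges on Lemma~\ref{dichotomy} (for abelian $G$ and weak ramification, each $\fp$ has $e^{\mathrm t}_\fp=1$ or $e^{\mathrm w}_\fp=1$), which forces $\chi$ to be \emph{unramified} at such $\fq$. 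You never invoke this dichotomy, and without it the matching of the two wild sums is not immediate. Second, the ``Deligne/Laumon'' step hides real work: to apply the product formula one has to produce an additive adelic character vanishing on $K(Y)$ (done in the paper via residues of a meromorphic differential and the residue theorem, Proposition~\ref{residue theorem}) and to compute the Tamagawa measure of $\AA_K/K$ as $|k|^{g_Y-1}$ (Proposition~\ref{Tamagawa measure}); and the Stickelberger input needs the Lubin--Tate reformulation of Section~\ref{ClassFieldTheory} so that it can be stated with denominator $e^{\mathrm t}$ rather than $q-1$. These are the concrete technical ingredients your sketch points towards but does not supply.
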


While the `strong' formula is a generalisation of the (first) main theorem in Chinburg's seminal paper \cite{Ch} (applied to curves), the `weak' formula is basically a corollary of it. More precisely, using Artin's induction theorem for modular representation theory, one quickly sees (see Section~\ref{GeneralFormula}) that it suffices to prove the `weak' formula only in the case when $G$ is cyclic and $p$ does not divide the order of $G$. That case is even more restrictive than the tamely ramified case considered in \cite{Ch}.

In order to prove the `strong' formula, we follow an  approach different to that used in \cite{Ch}. The idea and some of the steps of this alternative approach for tamely ramified covers of curves have been sketched in Erez's beautiful survey article \cite{Er}, but the preprint [CEPT5] cited there and authored by Chinburg, Erez, Pappas and Taylor seems to have not been published.

We now give an overview of our proof of the `strong' formula (\ref{StrongFormula}). We therefore, for almost the entire rest of this introduction, assume that $\pi$ is at most weakly ramified. As already explained earlier, the left-hand side of (\ref{StrongFormula}) is compatible with restriction to any subgroup of $G$. Using Mackey's double coset formula, we will see that the added induced representations on the right-hand side ensure that also the right-hand side is compatible with restriction, in the obvious sense. We need to show that both sides of (\ref{StrongFormula}) are equal after pairing them with $j_p(\mathit{V})$ as above. As usual, the classical Artin induction theorem implies that it suffices to assume that $\mathit{G}$ is cyclic and $\mathit{V}$ corresponds to a multiplicative character~$\chi$. In that case (and in fact also in the slightly more general case when $G$ is abelian), both sides can be explicitly computed as follows.

Let $r$ denote the degree of $k$ over $\FF_p$ and let $g_{Y_k}$ denote the genus of the geometrically irreducible curve $Y/k$. Furthermore, let $Y^\mathrm{t}$ (respectively $Y^\mathrm{w}$) denote the set of all points $\fq \in Y$ such that $\pi$ is tamely ramified (respectively wildly ramified) at one (and then all) point(s) $\fp \in X$ above $\fq$. For $\fq \in Y^t$, we restrict the character $\chi$ to the inertia subgroup $I_{\tilde{\fq}}$ for some $\tilde{\fq} \in \pi^{-1}(\fq)$ and compose it with the norm residue homomorphism from local class field theory to obtain a multiplicative character $\chi_{k(\fq)}$ on the residue field $k(\fq)$ and to obtain the Gauss sum $\tau(\chi_{k(\fq)})\in \bar{\QQ}$. In Section~\ref{EulerCharacteristics}, we will prove the following explicit formula which essentially computes the right-hand side of~(\ref{StrongFormula}).

\begin{thm*}[Equivariant Euler characteristic formula]
If $\pi$ is at most weakly ramified, we have
\begin{equation}\label{EulerCharacteristicFormula}
\langle e(\psi(G,\bar{X})), j_p (\chi) \rangle = r(1 - g_{Y_k}) - \sum_{\fq \in Y^\mathrm{t}} v_p(j_p(\tau(\chi_{k(\fq)}))) - \sum_{\fq \in Y^\mathrm{w}} \deg(\fq) .
\end{equation}
\end{thm*}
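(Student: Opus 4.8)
The left-hand side of (\ref{EulerCharacteristicFormula}) is an equivariant Euler characteristic twisted by an invertible sheaf; I would compute it by the equivariant Riemann--Roch formalism, but since we may assume $G$ is abelian and we are pairing with a single character $\chi$, the cleanest route is to descend to the quotient. Write $\cL = \cO_{\bar X}(\bar D)$ for the invertible $G$-sheaf attached to a divisor $D$ on $X$ with $n_\fp \equiv -1 \pmod{|G_{\fp,1}|}$ for all $\fp$. Pairing $e(\psi(G,\bar X))$ with $j_p(\chi)$ extracts, by the very definition of $\psi$ and the compatibility of $c$, $e$ and the pairing, a $p$-adic valuation of (a power of) the determinant of Frobenius acting on the $\chi$-isotypic part of $H^\bullet(\bar X, \cL)$; equivalently, after twisting down by $\chi$ on $\bar Y$, it is $v_p$ of the leading coefficient attached to the cohomology of a suitable sheaf $\cM_\chi$ on $\bar Y$, i.e.\ essentially the Euler characteristic of $\cM_\chi$ together with the contribution of the eigenvalue of geometric Frobenius. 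This is where the number $r(1-g_{Y_k})$ will come from: it is the $\FF_p$-dimension contribution, via $\dim_{\bar\FF_p} H^0 - \dim_{\bar\FF_p} H^1 = \deg + 1 - g$ on the curve $\bar Y_k$ of genus $g_{Y_k}$, combined with the base-change $\bar Y = \coprod_{i=1}^r \bar Y_k$ that produces the factor $r$.

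**Key steps, in order.** First I would reduce to $G$ abelian and $V$ corresponding to a multiplicative character $\chi$, as already indicated in the overview, so that all isotypic decompositions are one-dimensional over $\bar\FF_p$ (resp.\ $\bar\QQ_p$) at the generic point. Second, I would identify the $\chi$-component of $\bar\pi_* \cL$ with an explicit invertible sheaf $\cM_\chi$ on $\bar Y$; here the local condition $n_\fp \equiv -1 \pmod{|G_{\fp,1}|}$ is exactly what guarantees that at each ramified point the round-down in the standard formula for $(\pi_*\cO(D))^{(\chi)}$ behaves uniformly, so that $\cM_\chi$ has a clean degree. Third, I would compute $\langle e(\psi(G,\bar X)), j_p(\chi)\rangle$ as $-v_p$ of the unit-root / leading part of the $L$-factor of $\cM_\chi$, which by the Lefschetz trace formula over $\bar\FF_p$ and the Riemann--Roch theorem on $\bar Y_k$ equals $r\cdot\chi_{\mathrm{RR}}(\bar Y_k,\cM_{\chi,k})$ up to the Frobenius eigenvalue; the Riemann--Roch term gives $r(\deg \cM_{\chi,k} + 1 - g_{Y_k})$. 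Fourth, I would evaluate $v_p(\deg \cM_{\chi,k})$ locally: at a tamely ramified $\fq \in Y^{\mathrm t}$ the local contribution to the degree, after applying the norm-residue homomorphism, is precisely $v_p(\tau(\chi_{k(\fq)}))$ by the classical computation of the local root number of a tame character as a Gauss sum (Deligne's formula / the Langlands--Deligne local constant); at a wildly ramified $\fq \in Y^{\mathrm w}$, under the weak-ramification hypothesis the local term collapses to the single integer $\deg(\fq)$ coming from the unit coefficient $n_\fp \equiv -1$ and the vanishing of $G_{\fp,2}$. Summing over $\fq$ and folding the constant $r(1-g_{Y_k})$ back in yields (\ref{EulerCharacteristicFormula}); at unramified $\fq$ the local contribution vanishes, which is why no sum over $Y\setminus(Y^{\mathrm t}\cup Y^{\mathrm w})$ appears.

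**Main obstacle.** The genuinely delicate point is the tame local computation in the third and fourth steps: matching the $p$-adic valuation of the Frobenius eigenvalue on the stalk of $\cM_\chi$ at a tame point with $v_p(\tau(\chi_{k(\fq)}))$, i.e.\ recognising the relevant Gauss sum intrinsically and getting the normalisation of the norm-residue homomorphism (and hence the direction of the character $\chi_{k(\fq)}$) exactly right. This is where one must be careful that the "$\varepsilon$-constant as Gauss sum" identity is applied with the conventions that are consistent with the definition of $E(G,X)$ in the Introduction, and that the contribution of the archimedean-free, everywhere-unramified-away-from-ramification part of the conductor is correctly accounted for by the Euler-characteristic term $r(1-g_{Y_k})$. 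I would isolate this in a separate lemma computing, for a tame multiplicative character of a local field of residue degree $\deg(\fq)$, the $p$-adic valuation of its Gauss sum, and then feed it into the global sum. The wild points, by contrast, contribute only through the combinatorics of the divisor $D$ and the hypothesis $G_{\fp,2}=0$, so once the bookkeeping is set up they should be routine.
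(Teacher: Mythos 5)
Your plan diverges from what is actually proved, and it contains two gaps that I do not see how to repair. First, the reduction to an \emph{invertible} sheaf $\cM_\chi$ on $\bar Y$ fails in the wild case: when $p$ divides $|G|$, the group algebra $\bar\FF_p[G]$ is not semisimple, so there is no $\chi$-isotypic decomposition of $\bar\pi_*\cL$; the image of the $\chi$-idempotent of the $p'$-part of $G$ has generic rank equal to $|G_p|$, not $1$, and the crucial twist $\bar D^{\mathrm w}$ is precisely what makes the equivariant Euler characteristic land in the image of the Cartan map (that is, be ``projective''), which is a statement about $\psi(G,\bar X)\in K_0(\bar\FF_p[G])$ and not about a line bundle downstairs. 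Second, and more fundamentally, you conflate the pairing $\langle e(\psi(G,\bar X)), j_p\chi\rangle$ with a $p$-adic valuation. That pairing is an integer-valued multiplicity of the character $j_p\chi$ in the virtual $\bar\QQ_p[G]$-module $e(\psi(G,\bar X))$; no Frobenius eigenvalue or leading coefficient of an $L$-factor enters its definition. Your step ``the local contribution to the degree \dots is precisely $v_p(\tau(\chi_{k(\fq)}))$'' is then a type error: degree contributions to a line bundle are integers, whereas $v_p(j_p\tau(\chi_{k(\fq)}))$ is generally only in $\frac{1}{p-1}\ZZ$. The equality with the Gauss-sum valuation only emerges \emph{after} summing over the full Frobenius orbit above $\fq$ (i.e.\ over $i=0,\dots,\deg(\fq)-1$), and recognising that $\sum_i\{\,d_\fq(\chi)p^i/e_\fq\,\}$ equals $v_p(j_p\tau(\chi_{k(\fq)}))$; that identification is a Stickelberger-type statement which needs its own proof.

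The route the paper takes is to avoid descending to a sheaf on $\bar Y$ altogether. One plugs into the pairing the explicit description of $\psi(G,\bar X, \bar D)$ from K\"ock's structure theorem as a $\ZZ$-linear combination of induced projective covers $\bigl[\mathrm{Ind}_{I_P}^G(\mathrm{Cov}(\chi_P^d))\bigr]$ plus a free part; uses the adjunction $\langle e(x),y\rangle=\langle x,d(y)\rangle$ together with Frobenius reciprocity and the fact that $\langle \mathrm{Cov}(S),T\rangle=\delta_{S,T}$ for simple $S,T$ to turn the pairing into a count of which powers $\chi_P^d$ agree with $\mathrm{Res}(\eta_p j_p\chi)$; organises the count over the $\deg(\fq)$ points of $\bar X$ above each $\fq\in Y$, producing the sums of fractional parts $\sum_{i=0}^{\deg(\fq)-1}\{d_\fq(\chi)q^i/e_\fq\}$ at tame points; and only at the very last step invokes a reformulated Stickelberger formula (proved via Lubin--Tate theory) to convert these sums into $v_p(j_p\tau(\chi_{k(\fq)}))$, which requires $q=p$. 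The wild contribution $-\sum_{\fq\in Y^{\mathrm w}}\deg(\fq)$ falls out of the free part once one uses that for the divisor $\bar D^{\mathrm w}$ one has $m_\fq=-1$ there, and the dichotomy $e_\fq^{\mathrm t}=1$ or $e_\fq^{\mathrm w}=1$ (forced by weak ramification and commutativity of $G$) ensures tame and wild points do not interfere. Your intuitions about where $r(1-g_{Y_k})$ comes from and about the wild contribution being $\deg(\fq)$ are correct in spirit, and you correctly identify the Stickelberger-type lemma as the crux; but the mechanism you propose for reaching these quantities does not hold up, and you should replace it with the projective-cover bookkeeping and the adjunction $\langle e(\cdot),\cdot\rangle=\langle\cdot,d(\cdot)\rangle$.
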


The main ingredient in the proof of this theorem is the explicit description of $\psi(G, \bar{X})$ given in \cite{Ko}. After plugging that explicit description into the left-hand side of (\ref{EulerCharacteristicFormula}), it then takes somewhat lengthy calculations to arrive at the right-hand side of (\ref{EulerCharacteristicFormula}). At the end of these calculations we use a variant of Stickelberger's formula for the valuation of Gauss sums that will be developed in Section~\ref{ClassFieldTheory} using local class field theory, particularly the explicit description of the Hilbert symbol.

On the other hand, in Section~\ref{ComputingEpsilon}, we will derive the following formula for the epsilon constant $\varepsilon(\chi)$ from the Deligne-Langlands description of $\varepsilon(\chi)$ as a product of local epsilon constants, see \cite{De}.

\begin{thm*}[Epsilon constant formula]
Let $\pi$ be at most weakly ramified. Then, up to a multiplicative root of unity, we have
\begin{equation}\label{EpsilonConstantFormula}
\varepsilon(\chi^{-1}) = |k| ^{g_{Y_k} -1} \cdot \prod \tau(\chi_{k(\fq)}) \cdot \prod |k(\fq)|
\end{equation}
where the first product runs over all $\fq \in Y$ such that $\chi$ is tamely ramified (but not unramified) at $\fq$ and the second product runs over all $\fq \in Y$ such that $\chi$ is not tamely ramified at $\fq$.
\end{thm*}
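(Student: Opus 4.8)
The plan is to reduce the computation of $\varepsilon(\chi^{-1})$ to a product of local $\varepsilon$-factors via the standard decomposition for Artin L-functions over global fields, and then evaluate each local factor explicitly using local class field theory. First I would recall that for a multiplicative character $\chi$ of $G = \Gal(X/Y)$, viewed (via the reciprocity/norm-residue map) as an idele-class character of the function field $k(Y)$, the global $\varepsilon$-constant factors as $\varepsilon(\chi^{-1}) = \prod_{\fq} \varepsilon_{\fq}(\chi_{\fq}^{-1}, \psi_{\fq}, dx_{\fq})$ over all places $\fq$ of $Y$, once an additive character $\psi = \prod \psi_{\fq}$ of the adeles and local Haar measures have been fixed; the product is finite because $\varepsilon_{\fq} = 1$ at all places where $\chi$ is unramified and $\psi_{\fq}$ has conductor matching the local different. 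Since the statement is only up to a root of unity, I am free to choose $\psi$ and the measures conveniently (e.g.\ the self-dual measure and a global differential on $Y_k$), and I may discard any contribution that is a root of unity; this is what allows the clean product shape of (\ref{EpsilonConstantFormula}).

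Next I would treat the three types of places separately. At an unramified place $\fq$, the local $\varepsilon$-factor is $q_{\fq}^{n(\psi_{\fq})/2}$-ish, i.e.\ a power of $|k(\fq)|$ coming only from the chosen additive character's conductor; collecting these over all places and using that $\sum_{\fq} \deg(\fq) \cdot (\text{ord of }\psi_{\fq})$ is essentially $2g_{Y_k}-2$ (the degree of the canonical divisor of $Y_k$) by the product formula / Riemann–Roch, gives the factor $|k|^{g_{Y_k}-1}$. At a place $\fq$ where $\chi$ is tamely ramified but ramified, the local $\varepsilon$-factor is, up to a root of unity and up to the chosen normalisation, a classical Gauss sum over the residue field $k(\fq)$ attached to the character $\chi_{k(\fq)}$ obtained by restricting $\chi$ to inertia and transporting it through the norm-residue homomorphism --- exactly the $\tau(\chi_{k(\fq)})$ appearing in the theorem; this is the standard Tate/Deligne local computation for tamely ramified quasicharacters. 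At a place where $\chi$ is wildly ramified, I would use the general fact (again Tate/Deligne) that $|\varepsilon_{\fq}(\chi_{\fq}^{-1})| = q_{\fq}^{(a(\chi_{\fq}) + n(\psi_{\fq}))/2}$ where $a(\chi_{\fq})$ is the Artin conductor exponent; the subtlety is that in the wild case we are not claiming an exact Gauss-sum expression but only the absolute-value contribution $|k(\fq)|$, and to get exactly the first power of $|k(\fq)|$ one must show $a(\chi_{\fq}) + n(\psi_{\fq})$ contributes precisely $2\deg(\fq)$ after accounting for the different --- i.e.\ that the ``extra'' conductor beyond the tame part, measured against the local different of $X/Y$, is what it should be. Here the weak-ramification hypothesis is \emph{not} assumed (this theorem is the general one), so one must work with the actual conductor–discriminant relation and the fact that for a character of a cyclic group the conductor of $\chi_{\fq}$ relates to the ramification filtration in the standard Hasse–Arf way.

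I would then assemble the three pieces: the unramified places give $|k|^{g_{Y_k}-1}$, the tame-ramified places give $\prod \tau(\chi_{k(\fq)})$, and the wild places give $\prod |k(\fq)|$, with all discrepancies (choices of $\psi$, of measures, and the roots of unity inherent in each local Gauss-sum normalisation, including the tame ones at the unramified-character-but-ramified-place boundary) absorbed into the ``up to a root of unity'' slack. I would be careful that the twist by $\chi^{-1}$ versus $\chi$ is tracked consistently, since conjugation/inversion sends a Gauss sum to (a root of unity times) itself or its conjugate, and that the global constant term does not hide an extra power of $|k|$; the cleanest bookkeeping is to fix once and for all a meromorphic differential $\omega$ on $Y_k$, use $\psi_{\fq} = \psi_0 \circ \mathrm{Res}_{\fq}(\omega \cdot \,)$ at every place, so that $n(\psi_{\fq}) = \mathrm{ord}_{\fq}(\omega)$ and $\sum_{\fq} \mathrm{ord}_{\fq}(\omega)\deg(\fq) = 2g_{Y_k}-2$ automatically, which is the identity that produces the global $|k|^{g_{Y_k}-1}$.

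The main obstacle I anticipate is the wildly ramified local computation: unlike the tame case, there is no simple closed form for the local $\varepsilon$-factor, so one must argue purely at the level of its $p$-adic valuation / absolute value via the conductor, and one must correctly match the conductor exponent $a(\chi_{\fq})$ of the character against the contribution of the local different of $\pi$ at $\fq$ so that the net power of $|k(\fq)|$ is exactly $1$; getting the normalisation constants and the different's contribution to line up (and confirming that everything else really is just a root of unity) is where the care is needed. A secondary point requiring attention is the passage from a character of the Galois group $G$ to an idele-class character of $k(Y)$ and checking that ``$\chi$ tamely/wildly ramified at $\fq$'' in the sense of the ramification of $\pi$ coincides with ``$\chi_{\fq}$ tamely/wildly ramified'' in the sense of local conductors --- this is where Frobenius reciprocity and the compatibility of the global and local reciprocity maps enter, and it is what licenses reducing to cyclic $G$ and multiplicative $\chi$ in the first place.
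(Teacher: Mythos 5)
Your overall plan matches the paper's proof: decompose $\varepsilon(\chi)$ into local factors via the Deligne--Langlands product formula (\cite[5.11.2, 5.3]{De}), fix a meromorphic differential $\omega$ on $Y$ to build the adelic additive character $\psi_\omega$, use $\deg(\omega) = 2g_{Y_k}-2$ to produce the global factor $|k|^{g_{Y_k}-1}$, and then evaluate the local $\varepsilon$-factors at unramified, tame, and wild places separately. The paper (Theorem~\ref{global epsilon} together with Propositions~\ref{Tamagawa measure}, \ref{unramified case}, \ref{tamely ramified case}, \ref{weakly ramified case}) follows exactly this route, with one extra intermediate step you do not mention: it computes the Tamagawa volume $\mathrm{vol}(\AA_K/K) = |k|^{g_Y-1}$ (Proposition~\ref{Tamagawa measure}, proved via the long exact sequence attached to $0 \to \cO_Y \to \cM_Y \to \cM_Y/\cO_Y \to 0$), which is needed to identify the global normalizing constant in Deligne's product formula.

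The genuine gap is in your wildly ramified local computation. You propose to argue only at the level of $\lvert \varepsilon_\fq\rvert$ via the conductor formula $\lvert\varepsilon_\fq\rvert = q_\fq^{(a(\chi_\fq)+n(\psi_\fq))/2}$. That cannot prove the claim ``$=$ up to a root of unity,'' since knowing the absolute value of a complex number does not determine it modulo roots of unity --- for instance, in the tame case $\varepsilon_\fq$ is a nontrivial Gauss sum $\tau$, whose absolute value is also a power of $q_\fq$ but which is \emph{not} a root of unity times $q_\fq^{1/2}$. The point you must actually prove, and which the paper establishes by an explicit $p$-adic integral computation in Proposition~\ref{weakly ramified case}, is that in the wildly ramified case the Deligne integral $\int_{\gamma\hat\cO_{Y,\fq}^\times} \chi^{-1}_{\tilde\fq}(x)\psi_\fq(x)\,\mathrm{d}x_\fq$ has \emph{no} genuine Gauss-sum content: one chooses $\gamma$ with $\chi_{\tilde\fq}(1+a) = \psi_\fq(\gamma a)$ on the last conductor layer, splits the domain over residue classes $\gamma\tilde b(1+\fm_\fq)$, observes the integrand is constant on the $b=1$ piece (a stationary-phase phenomenon), and that the $b\neq 1$ pieces vanish; this yields literally $\chi^{-1}_{\tilde\fq}(\gamma)\psi_\fq(\gamma)\,|k(\fq)|^{\mathrm{cd}_\fq - 1}$, a manifest root of unity times a power of $|k(\fq)|$. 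Your argument does not supply this, nor does it explain why wild places, unlike tame ones, contribute a pure power of $q$. A smaller but real defect: you assert ``the wild places give $\prod |k(\fq)|$'' while also saying ``the weak-ramification hypothesis is \emph{not} assumed''; these are inconsistent, since the exponent in the general case is $\mathrm{cd}_\fq(\chi)-1$, and it equals $1$ precisely when the conductor is $2$, i.e.\ when ramification is weak. Your bookkeeping (self-dual measure vs.\ the paper's $\mathrm{vol}(\hat\cO_{Y,\fq})=1$ normalization, and the phrase ``$q_\fq^{n(\psi_\fq)/2}$-ish'' at unramified places) further muddies the exponents.
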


To be able to apply the Deligne-Langlands formula we will first recall some preliminaries about the Tamagawa measure on the ring $\AA_{K(Y)}$ of adeles of the function field $K(Y)$ and about parameterising additive characters on $\AA_{K(Y)}/K(Y)$ using differentials. The epsilon constant formula above then follows from the Deligne-Langlands formula after identifying the local epsilon constants in the unramified, tamely and weakly ramified case, respectively.

The Euler characteristic formula (\ref{EulerCharacteristicFormula}) and the epsilon constant formula~(\ref{EpsilonConstantFormula}) finally imply the `strong' formula (\ref{StrongFormula}) after observing that the difference between the set $Y^\mathrm{w}$ and the set of all points $\fq \in Y$ such that $\chi$ is wildly ramified at $\fq$ is accounted for by the sum $\sum_{Q \in \bar{Y}^\mathrm{w}} \left[\mathrm{Ind}_{G_{\tilde{Q}}}^G ({\bf 1})\right]$.

Given our strong and weak formulae, it may be tempting to conjecture that $E(G,X)$ is integral, i.e., belongs to the lattice $K_0(\bar{\QQ}_p[G])$ in $K_0(\bar{\QQ}_p[G])_\QQ$, even if $\pi$ is not weakly ramified. The following theorem (see Theorem~A.2 in the Appendix) indeed provides bounds on the denominators occurring in $E(G,X)$, however we also construct examples in the Appendix showing that these bounds are the best possible in general.

\begin{thm*}
The epsilon representation $E(G,X)$ belongs to $\frac{1}{2} K_0(\bar{\QQ}_p[G])$ if \linebreak[4] $p=2$ and to $\frac{1}{p-1}K_0(\bar{\QQ}_p[G])$ if $p\ge3$.
\end{thm*}

The proof of this theorem relies on the Deligne-Langlands product formula again and on a lemma which computes the $p$-adic valuation of local epsilon constants in the arbitrarily wildly ramified case (see Lemma~A.1).

We finish this introduction by pointing the reader to current related work. For instance, the bibliography of the paper \cite{KT} contains a number of recent papers dealing with the problem of determining the canonical representation of $G$ on $H^0(X,\Omega_X)$ in the wildly ramified case. Current research about Galois module theory for weakly ramified extensions of local or number fields can be found in \cite{BC}, \cite{CV}, \cite{Gr} and \cite{Jo}. Finally, vast generalisations of Deligne's product formula and of some of the material in Section~\ref{ComputingEpsilon} to epsilon constants in $\ell$-adic and $p$-adic cohomology can be found in \cite{Lau} and \cite{Ma}, \cite{AM}, respectively.
\vspace{0.2cm}

{\bf Acknowledgements.} The second author thanks Adriano Marmora for his profound interest in the subject of this paper, for explaining $\ell$- and $p$-adic generalisations of the material in Section~2 and for pointing out a mistake in an earlier generalised version of Lemma~\ref{weakly ramified case}.

\vspace{0.3cm}

{\bf Notations.} Let $p$ be a prime and let $\QQ_p$ denote the $p$-adic completion of the field $\QQ$ of rational numbers. We fix an algebraic closure $\bar{\QQ}_p$ of $\QQ_p$ and denote the residue field of $\bar{\QQ}_p$ by $\bar{\FF}_p$, an algebraic closure of the field $\FF_p$ with $p$ elements. In particular, we have a well-defined reduction map from the ring of integers of $\bar{\QQ}_p$ to $\bar{\FF}_p$ which we denote by $\eta_p$. If $q$ is a power of $p$, the unique subfield of $\bar{\FF}_p$ with $q$ elements is denoted by~$\FF_q$. Furthermore, let $\bar{\QQ}$ denote the algebraic closure of $\QQ$ inside the field~$\CC$ of complex numbers. Throughout this paper, we fix a field embedding $j_p: \bar{\QQ} \rightarrow \bar{\QQ}_p$. The unique extension of the standard $p$-adic valuation on $\QQ$ to $\bar{\QQ}_p$ will be denoted by $v_p$ and takes values in $\QQ$.

For any finite group~$G$ and field~$F$, the Grothendieck group of all finitely generated {\em projective} modules over the group ring $F[G]$ will be denoted by $K_0(F[G])$ and the Grothendieck group of {\em all} finitely generated $F[G]$-modules by $K_0(G,F)$. The isomorphism $K_0(G,\bar{\QQ}) \rightarrow K_0(G,\bar{\QQ}_p)$ induced by the embedding $j_p$ (and other homomorphisms induced by $j_p$) will be denoted by $j_p$ as well. We have a canonical isomorphism between $K_0(G, \bar{\QQ})$ and $K_0(G,\CC)$ and identify these two groups with the classical ring of virtual characters of~$G$. The group of $n^\mathrm{th}$ roots of unity in  $F$ will be denoted by $\mu_n(F)$. If furthermore $H/F$ is a Galois extension, the corresponding trace map is denoted by $\mathrm{Tr}_{H/F}$ or just by $\mathrm{Tr}$.

For any $r \in \RR$, the integral part $\lfloor r \rfloor$ and fractional part $\{r\}$ are related by $\lfloor r \rfloor = r - \{r\}$.

A point in a scheme will always mean a closed point.

\section{A Variant of Stickelberger's Formula}\label{ClassFieldTheory}

Let $L/K$ be a finite {\em abelian} Galois extension of local fields with Galois group~$G$. The multiplicative group $k^\times$ of the residue field of $K$ and the `tame subquotient' $G_0/G_1$ of $G$ are related in two natural ways: on the one hand, the norm residue homomorphism from class field theory induces a natural epimorphism~$\gamma_{L/K}$ from $k^\times$ to $G_0/G_1$; on the other hand, the natural representation of the inertia subgroup $G_0$ of $G$ on the `cotangent space of $L$' induces a monomorphism $\chi_{L/K}$ from $G_0/G_1$ to $k^\times$. In Proposition~\ref{composition} below we compute the endomorphism $\chi_{L/K} \circ \gamma_{L/K}$ of the cyclic group $k^\times$. We use this later in Theorem~\ref{GaussSum} to reformulate Stickelberger's formula for the $p$-adic valuation of the Gauss sum~$\tau(\chi)$ associated with a multiplicative character~$\chi$ of $G_0/G_1$.

We denote the  valuation rings of $L$ and $K$ by $\mathcal{O}_L$ and $\mathcal{O}_K$, the maximal ideals by $\mathfrak{m}_L$ and $\mathfrak{m}_K$ and the residue fields by $l$ and $k$, respectively. The characteristic of $k$ and~$l$ is denoted by~$p >0$ and the cardinality of $k$ is denoted by $q=p^r$. We write $G_i = G_i(L/K)$ and $G^i=G^i(L/K)$ for the $i^\textrm{th}$ higher ramification group of $L/K$ in lower and upper numbering, respectively. Furthermore, let $e^\textrm{t} = e^\textrm{t}_{L/K} = \textrm{ord}(G_0)/\textrm{ord}(G_1)$ be the tame part of the ramification index~$e=e_{L/K}=\textrm{ord}(G_0)$ of~$L/K$, and let $e^\mathrm{w} = e^\mathrm{w}_{L/K}= \textrm{ord}(G_1)$ denote the wild part of $e$. In other words, $e^\mathrm{w}$ is the $p$-part of $e$ and $e^\mathrm{t}$ is the non-$p$-part of $e$.

We re-normalise the norm residue homomorphism $(\hspace{1em}, L/K): K^\times \rightarrow  G$ defined in Chapters~IV and~V of \cite{Ne} by composing it with the homomorphism that takes every element to its inverse and denote the resulting composition by
$$\gamma_{L/K}: K^\times \rightarrow G.$$
By definition, the map $\gamma_{L/K}$ is surjective and its composition with the canonical epimorphism $G \rightarrow G/G_0 \cong \mathrm{Gal}(l/k)$ maps every prime element of $\mathcal{O}_K$ to the inverse of the Frobenius automorphism $x \mapsto x^{q}$. Furthermore, by Theorem~V(6.2) in \cite{Ne}, it maps the group $\mathcal{O}_K^\times$ of units of $\mathcal{O}_K$ onto the subgroup~$G_0$ of $G$ and the subgroup $1+\mathfrak{m}_K$ of $\mathcal{O}_K^\times$ onto the subgroup $G^1$ of $G_1$.  Thus, the norm residue homomorphism $\gamma_{L/K}$ induces the epimorphism
\[\xymatrix{\gamma_{L/K}: k^\times = \mathcal{O}_K^\times/(1+\mathfrak{m}_K) \ar@{->>}[r] & G_0/G^1}\]
(denoted $\gamma_{L/K}$ again).
Hence $G^1 = G_1$, the tame part $e^\mathrm{t}$ of $e$ divides $|k^\times| = q-1$ and the cyclic group $k^\times$ contains all $(e^\mathrm{t})^\mathrm{th}$ roots of unity.

The one-dimensional $l$-representation $\mathfrak{m}_L/\mathfrak{m}_L^2$ of $G_0$ defines a multiplicative character from $G_0$ to $l^\times$ which we may view as a homomorphism
\[\chi_{L/K}: G_0/G_1 \rightarrow k^\times\]
because $G_1$ is a $p$-group and because $k^\times$ contains all $(e^\mathrm{t})^\mathrm{th}$ roots of unity; in fact $\chi_{L/K}$ is injective by Proposition~IV.2.7 in \cite{Se}.

\begin{prop}\label{composition} The composition $\chi_{L/K} \circ \gamma_{L/K}$ raises every element of $k^\times$ to the power $\frac{q-1}{e^\mathrm{t}}\cdot \frac{1}{e^\mathrm{w}}$.
\end{prop}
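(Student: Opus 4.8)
The plan is to reduce the statement, in two steps, to the case of a totally tamely ramified extension, where $\chi_{L/K}\circ\gamma_{L/K}$ can be written down explicitly. \emph{First} I would replace $K$ by the maximal unramified subextension $K_1:=L^{G_0}$, so that $L/K_1$ becomes totally ramified with $G_i(L/K_1)=G_i$ for all $i\ge 0$ — in particular $e^\mathrm{t}$ and $e^\mathrm{w}$ are unchanged — while the residue field of $K_1$ is $l$, of cardinality $q^f$ with $f=[K_1:K]$. The character $\chi_{L/K_1}$ of $G_0/G_1$ defined by $\mathfrak{m}_L/\mathfrak{m}_L^2$ coincides with $\chi_{L/K}$, and the norm-compatibility of the reciprocity map (cf.\ \cite{Ne}) together with the fact that on an unramified extension the norm of units reduces to the residue-field norm gives $\gamma_{L/K_1}=\gamma_{L/K}\circ\mathrm{N}_{l/k}$ on $l^\times$. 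Using the identity $q^f-1=(q-1)(1+q+\cdots+q^{f-1})$, a short congruence computation then deduces the proposition for $L/K$ from the proposition for $L/K_1$. So I may assume $L/K$ is totally ramified.

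\emph{Second}, still in the totally ramified case, I would replace $L$ by the maximal tamely ramified subextension $M:=L^{G_1}$, so that $M/K$ is totally tamely ramified of degree $e^\mathrm{t}$ and $L/M$ is totally ramified of degree $e^\mathrm{w}$. On the one hand, since $G^1=G_1$, the epimorphism $\gamma_{L/K}\colon k^\times\twoheadrightarrow G_0/G_1$ equals $\gamma_{M/K}$, by functoriality of the reciprocity map with respect to the quotient $G\to G/G_1$. On the other hand, if $\varpi_L$ and $\varpi_M$ are uniformisers of $L$ and $M$ then $\varpi_M=\eta\,\varpi_L^{e^\mathrm{w}}$ for some $\eta\in\mathcal{O}_L^\times$, and, since every $\sigma\in G_0$ acts trivially on the residue field, reducing the identity $\sigma(\varpi_M)/\varpi_M=(\sigma(\eta)/\eta)\cdot(\sigma(\varpi_L)/\varpi_L)^{e^\mathrm{w}}$ modulo $\mathfrak{m}_L$ gives $\chi_{M/K}=\chi_{L/K}^{\,e^\mathrm{w}}$. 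Hence $(\chi_{L/K}\circ\gamma_{L/K})^{e^\mathrm{w}}=\chi_{M/K}\circ\gamma_{M/K}$ as endomorphisms of $k^\times$; since $e^\mathrm{w}$ is a power of $p$ and hence invertible modulo $q-1=|k^\times|$, it now suffices to prove the $e^\mathrm{w}=1$ case, i.e.\ that $\chi_{M/K}\circ\gamma_{M/K}$ raises every element of $k^\times$ to the power $\frac{q-1}{e^\mathrm{t}}$.

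In that remaining case $\mathrm{Gal}(M/K)$ is cyclic — it embeds into $k^\times$ via $\chi_{M/K}$ — of order $n:=e^\mathrm{t}$ dividing $q-1$, and a standard argument (using $\mu_n(K)=\mu_n(k)$) shows $M=K(\varpi)$ with $\varpi^n=\pi$ a uniformiser of $K$; thus $\sigma(\varpi)=\zeta_\sigma\varpi$ with $\zeta_\sigma\in\mu_n(K)$ and $\chi_{M/K}(\sigma)=\overline{\zeta_\sigma}$. Applying Lubin–Tate theory to a Lubin–Tate tower containing $M$ — this is the step that uses the particular (re-normalised) reciprocity map — one finds $\gamma_{M/K}(u)(\varpi)/\varpi\equiv\overline{u}^{\,(q-1)/n}\pmod{\mathfrak{m}_M}$, and hence $\chi_{M/K}(\gamma_{M/K}(u))=\overline{u}^{\,(q-1)/n}$, as wanted; alternatively one can simply invoke the classical tame reciprocity law, cf.\ \cite{Ne}, \cite{Se}. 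The delicate point throughout is the bookkeeping with conventions here: one must check that the re-normalisation of $\gamma_{L/K}$ (the norm residue symbol of \cite{Ne} composed with inversion) is precisely what produces the exponent $+\frac{q-1}{e^\mathrm{t}}$ rather than its negative, and that the tamely ramified extension in question can indeed be placed inside a Lubin–Tate tower; the rest is routine.
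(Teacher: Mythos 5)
Your proof is correct but takes a genuinely different reduction path from the paper's. The paper goes \emph{upward}: by Korollar~V(5.7) in \cite{Ne} it embeds $L$ into the compositum $\tilde{K}L_n$ of an unramified extension $\tilde K$ and a Lubin--Tate extension $L_n$, reduces to $\tilde{K}L_n/K$ and then to $L_n/K$ via functoriality and a commutative diagram, and in the base case $L_n/K$ obtains $\chi_{L_n/K}\circ\gamma_{L_n/K}=\mathrm{id}$ directly from the Lubin--Tate module axiom $[u]_F(\lambda)\equiv u\lambda\pmod{\lambda^2}$, with $e^\mathrm{t}_{L_n/K}=q-1$, $e^\mathrm{w}_{L_n/K}=q^{n-1}$ accounting for the asserted exponent. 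You go \emph{downward}: first to $L/K_1$ with $K_1=L^{G_0}$, compensating for the change of residue field via $\gamma_{L/K_1}=\gamma_{L/K}\circ\mathrm{N}_{l/k}$ and the identity $N_{l/k}(x)=x^{(q^f-1)/(q-1)}$; then to $M/K$ with $M=L^{G_1}$ using $\gamma_{L/K}=\gamma_{M/K}$ and the uniformiser relation $\chi_{M/K}=\chi_{L/K}^{e^\mathrm{w}}$ (the latter is precisely the mirror of the vertical ``$e_{\tilde K L_n/L}$'' arrow in the paper's diagram); and you settle the tame totally ramified base case by Lubin--Tate or by the classical tame reciprocity law. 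Both arguments thus rest on the same two ingredients---functoriality of the re-normalised reciprocity map and a Lubin--Tate computation---but yours isolates the unramified, wild and tame layers in three separate steps, which makes the role of $e^{\mathrm w}$ more transparent, at the cost of a slightly less canonical base case: a given tame $M/K$ sits inside a Lubin--Tate tower only after choosing the prime $\pi$ appropriately, whereas the paper's base case $L_n/K$ is tailor-made for the Lubin--Tate formula. Your caveat about the bookkeeping of signs under the re-normalisation is the right thing to flag; once that is carried out carefully the outline closes.
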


Here, exponentiating with $\frac{1}{e^\mathrm{w}}$ just means the inverse map of exponentiating with $e^\mathrm{w}$, as usual. This is the identity map if $e^\mathrm{w}$ is a power of $q$ which in turn holds true for instance if $q=p$ or if $e^\mathrm{w} =1$.

\begin{proof}
We first reduce Proposition~\ref{composition} to the case when $L/K$ is tamely ramified. To this end, we consider the diagram
\[\xymatrix{k^\times \ar[rrr]^{\gamma_{L/K} \hspace*{4em}} \ar@{=}[d] &&& G_0(L/K)/G_1(L/K) \ar[rrr]^{\hspace*{4em} \chi_{L/K}} \ar@{->>}[d] &&& k^\times \ar[d]^{e_{L/L^{G_1}}= e^\mathrm{w}}\\
k^\times \ar[rrr]^{\gamma_{L^{G_1}/K}\hspace*{2em}} &&& G_0(L^{G_1}/K) \ar[rrr]^{\hspace*{2em} \chi_{L^{G_1}/K}} &&& k^\times.}
\]
The left-hand square commutes by functoriality of the norm residue homomorphism (see Satz~IV(5.8) in \cite{Ne}). From the definition of $\chi_{L/K}$ and $\chi_{L^{G_1}/K}$ we easily derive that the right-hand square commutes as well. Now, assuming that the lower horizontal composition is the $\frac{q-1}{e^\mathrm{t}}$th power map implies that the upper horizontal composition is the $\left(\frac{q-1}{e^\mathrm{t}} \cdot \frac{1}{e^\mathrm{w}}\right)$th power map, as desired.

Next we claim that we may moreover assume that $L/K$ is totally ramified. Similarly to above, this follows from the commutativity of the diagram
\[\xymatrix{l^\times \ar[rrr]^{\gamma_{L/L^{G_0}} \hspace*{2em}} \ar[d]^N &&& G_0(L/L^{G_0})\ar[rrr]^{\hspace*{2em} \chi_{L/L^{G_0}}} \ar@{=}[d] &&& l^\times\\
k^\times \ar[rrr]^{\gamma_{L/K}\hspace*{2em}} &&& G_0(L/K) \ar[rrr]^{\hspace*{2em} \chi_{L/K}} &&& k^\times\ar@{^{(}->}[u].}
\]
and from the fact that norm map $N: l^\times \rightarrow k^\times$ is the $\frac{q^{[l:k]}-1}{q-1}$th power map and is in particular surjective.

Thus, we may assume that $L/K$ is totally and tamely ramified. In this case, $L/K$ is a cyclic extension and in fact a Kummer extension of the form $L=K(\pi_L)$ where $\pi_L$ is the $e$th root of suitably chosen prime element of $K$. Now, let $a \in k^\times$. Then, by definition of the Hilbert symbol (see Satz~V(3.1) in \cite{Ne}), the element $\chi_{L/K}(\gamma_{L/K}(a))$ is equal to the Hilbert symbol associated with the pair $(a, \pi_L^{-1})$ (the inverse is due to the re-normalisation of the norm residue homomorphism, see above). By the explicit description of the Hilbert symbol given in Satz~V(3.4) of \cite{Ne} this is equal to $a^{\frac{q-1}{e}}$, as was to be shown.
\end{proof}

To explain (the proof of) our variant of Stickelberger's formula it is convenient to introduce the following notation. For any $d \in \ZZ$ we define
\[S_{L/K}(d):=\left\{\left\{\frac{dp^i}{e^\mathrm{t}}\right\}: i= 0, \ldots, r-1\right\},\]
where $\{x\}= x - \lfloor x \rfloor$ denotes the fractional part of any $x \in \RR$. Although we have used set brackets in the definition of $S_{L/K}(d)$, we rather consider $S_{L/K}(d)$ as an unordered tuple, i.e., multiple entries of the same rational number are allowed. As $e^\mathrm{t}$ divides $q-1 = p^r - 1$, we have
\[S_{L/K}(dp^N) = S_{L/K}(d) \quad \textrm{for any } N \in \NN.\]

Let now $\bar{\chi}: G_0/G_1 \rightarrow \bar{\FF}_p^\times$ be a multiplicative character. (We will later define the character $\chi: G_0/G_1 \rightarrow \bar{\QQ}^\times$ corresponding to $\bar{\chi}$.) Furthermore we fix a field embedding $\beta: k \hookrightarrow \bar{\FF}_p$ of $k$ into the algebraic closure $\bar{\FF}_p$ of $\FF_p$. As $\chi_{L/K}$ is injective, there exists a unique integer $d({\bar{\chi}}) \in \{0, \ldots, e^\mathrm{t} - 1\}$ such that ${\bar{\chi}}$ is the $d({\bar{\chi}})^\mathrm{th}$ power of the composition
\[\xymatrix{G_0/G_1 \ar@{^(->}[rr]^{\chi_{L/K}} && k^\times \ar@{^(->}[r]^\beta & \bar{\FF}_p^\times}.\]
While $d({\bar{\chi}})$ depends on the embedding $\beta$, the unordered tuple $S_{L/K}(d({\bar{\chi}}))$ does not. Indeed, any other such embedding is equal to $\beta \circ F^N= \beta^{(p^N)}$ for some $N \in \NN$, where $F: k \rightarrow k$, $x \mapsto x^p$, denotes the Frobenius homomorphisms. Thus, choosing a different embedding amounts to multiplying $d({\bar{\chi}})$ with a power of $p$ and therefore does not change $S_{L/K}(d({\bar{\chi}}))$.

Furthermore, there exists a unique integer $c({\bar{\chi}}) \in \{0, \ldots, q-2\}$ such that the composition
\[\xymatrix{k^\times \ar@{->>}[rr]^{\gamma_{L/K}\hspace*{1.5em}} && G_0/G_1  \ar[r]^{\bar{\chi}} &\bar{\FF}_p^\times}\]
is the $c({\bar{\chi}})^\mathrm{th}$ power of $\beta$.

\begin{cor}\label{EqualityTuples}
We have the following equality of unordered tuples:
\[S_{L/K}(d({\bar{\chi}})) = \left\{\left\{\frac{c({\bar{\chi}})p^i}{q-1} \right\}: i=0, \ldots, r-1\right\}.\]
\end{cor}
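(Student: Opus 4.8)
The plan is to combine the defining properties of $c({\bar{\chi}})$ and $d({\bar{\chi}})$ via Proposition~\ref{composition} into a single congruence, and then read off the claimed equality of unordered tuples. Write $c = c({\bar{\chi}})$ and $d = d({\bar{\chi}})$, and let $m$ be an inverse of $e^\mathrm{w}$ modulo $q-1$, which exists because $e^\mathrm{w}$ (being the $p$-part of $e$) is coprime to $q-1$; as explained after the statement of Proposition~\ref{composition}, raising to the power $\tfrac{1}{e^\mathrm{w}}$ on the cyclic group $k^\times$ of order $q-1$ means raising to the power $m$. For $x \in k^\times$ I would evaluate $\bar{\chi}(\gamma_{L/K}(x))$ in two ways. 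By the definition of $c$ it equals $\beta(x)^{c}$. By the definition of $d$ it equals $\beta\bigl(\chi_{L/K}(\gamma_{L/K}(x))\bigr)^{d}$, and Proposition~\ref{composition} identifies $\chi_{L/K}(\gamma_{L/K}(x))$ with $x^{\,m(q-1)/e^\mathrm{t}}$, so this equals $\beta(x)^{\,m d (q-1)/e^\mathrm{t}}$. Since $\beta$ restricts to an injective homomorphism from $k^\times$ into $\bar{\FF}_p^\times$, comparing exponents yields
\[ c \equiv \frac{q-1}{e^\mathrm{t}}\, m\, d \pmod{q-1}. \]

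From this congruence, multiplying through by $p^i$ and then dividing by $q-1$, I obtain $\frac{c p^i}{q-1} \equiv \frac{m d\, p^i}{e^\mathrm{t}} \pmod 1$ for every $i \geq 0$, hence $\left\{\frac{c p^i}{q-1}\right\} = \left\{\frac{m d\, p^i}{e^\mathrm{t}}\right\}$; so the right-hand tuple in the corollary is exactly $S_{L/K}(md)$. It then remains to see that $S_{L/K}(md) = S_{L/K}(d)$. Reducing $m e^\mathrm{w} \equiv 1 \pmod{q-1}$ modulo $e^\mathrm{t}$ (permissible as $e^\mathrm{t} \mid q-1$) shows $m$ is an inverse of $e^\mathrm{w}$ modulo $e^\mathrm{t}$ as well. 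Since $e^\mathrm{w}$ is a power of $p$ and $p^r \equiv 1 \pmod{e^\mathrm{t}}$ (because $e^\mathrm{t} \mid q-1 = p^r - 1$), this inverse is congruent modulo $e^\mathrm{t}$ to some nonnegative power $p^N$ of $p$; hence $md \equiv d p^N \pmod{e^\mathrm{t}}$. As the $i$-th entry $\left\{\frac{d' p^i}{e^\mathrm{t}}\right\}$ of $S_{L/K}(d')$ depends only on $d'$ modulo $e^\mathrm{t}$, we get $S_{L/K}(md) = S_{L/K}(d p^N)$, which equals $S_{L/K}(d)$ by the identity $S_{L/K}(d p^N) = S_{L/K}(d)$ recorded just above the corollary. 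Chaining these equalities gives $S_{L/K}(d({\bar{\chi}})) = S_{L/K}(md) = \left\{\left\{\frac{c({\bar{\chi}}) p^i}{q-1}\right\}: i = 0, \ldots, r-1\right\}$, which is the assertion.

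The whole argument is essentially a matter of bookkeeping; the only points requiring care are the precise meaning of the exponent $\tfrac{1}{e^\mathrm{w}}$ coming from Proposition~\ref{composition} and keeping the two moduli $q-1$ and $e^\mathrm{t}$ straight. I do not expect any genuine obstacle beyond that.
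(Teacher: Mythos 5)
Your proof is correct and follows essentially the same route as the paper's: both use Proposition~\ref{composition} to derive a congruence $c(\bar{\chi}) \equiv \frac{q-1}{e^\mathrm{t}}\,(e^\mathrm{w})^{-1}\,d(\bar{\chi}) \pmod{q-1}$, and then exploit the observation that the inverse of $e^\mathrm{w}$ is a power of $p$ modulo $e^\mathrm{t}$ together with the invariance $S_{L/K}(dp^N)=S_{L/K}(d)$. The only cosmetic difference is that the paper picks the explicit representative $q^N/e^\mathrm{w}$ (a power of $p$ outright, since $q=p^r$ and $e^\mathrm{w}=p^s$), which lets one cite $S_{L/K}(dp^N)=S_{L/K}(d)$ immediately, whereas you work with an abstract inverse $m$ and then supply the short extra argument that $m$ is congruent to a nonnegative power of $p$ modulo $e^\mathrm{t}$; the mathematical content is identical.
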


\begin{proof}
By definition of $c({\bar{\chi}})$ and of $d({\bar{\chi}})$ and by Proposition~\ref{composition} we have
\begin{align*}
\beta^{c({\bar{\chi}})} = {\bar{\chi}} \circ \gamma_{L/K} = (\beta \circ \chi_{L/K})^{d({\bar{\chi}})} \circ \gamma_{L/K} = \beta^{d({\bar{\chi}}) \cdot \frac{q-1}{e^\mathrm{t}}\cdot \frac{1}{e^\mathrm{w}}}.
\end{align*}
Hence
\[c({\bar{\chi}}) \equiv  \frac{d({\bar{\chi}}) (q-1)}{e^\mathrm{t}} \cdot \frac{q^N}{e^\mathrm{w}} \quad \mathrm{mod} \quad q-1\]
with $N$ chosen big enough so $e^\mathrm{w}$ divides $q^N$. This implies
\[\left\{ \frac{c({\bar{\chi}})}{q-1} \right\} = \left\{  \frac{d({\bar{\chi}})}{e^\mathrm{t}} \cdot \frac{q^N}{e^\mathrm{w}} \right\}\]
and finally
\[S_{L/K}(d({\bar{\chi}})) = S_{L/K} \left(d({\bar{\chi}}) \frac{q^N}{e^\mathrm{w}}\right) = \left\{ \left\{ \frac{c({\bar{\chi}}) p^i}{q-1} \right\}: i=0, \ldots, r-1\right\},\]
as was to be shown.
\end{proof}

We recall that $\eta_p : \mu_{e^t}(\bar{\QQ}_p) \; \stackrel{\sim}{\longrightarrow}\; \mu_{e^t}(\bar{\FF}_p^\times)$ denotes the reduction map modulo $p$ and that we have fixed an emebedding $j_p: \bar{\QQ} \hookrightarrow \bar{\QQ}_p$. There obviously exists a unique character $\chi: G_0/G_1 \rightarrow \bar{\QQ}^\times$ such that $\eta_p j_p \chi = \bar{\chi}$. Composing with the norm residue homomorphism defines the multiplicative character
\[\chi_k := \chi \circ \gamma_{L/K}: k^\times \rightarrow \bar{\QQ}^\times.\]
Let $\zeta_p := e^{\frac{2\pi i}{p}} \in \bar{\QQ} \subset \CC$. We define the additive character
\[ \psi_k: k \rightarrow \bar{\QQ}^\times, \quad x \mapsto \zeta_p^{\mathrm{Tr}(x)} = \exp\left(\frac{2\pi i}{p}{\mathrm{Tr}_{k/\FF_p}(x)}\right),\]
where $\mathrm{Tr}: k \rightarrow \FF_p$ denotes the trace map. Furthermore we define the Gauss sum
\[\tau (\chi):= \sum _{x \in k^\times} \chi_k(x)^{-1} \cdot \psi_k(x) \in \bar{\QQ}.\]
Stickelberger's formula computes the $p$-adic valuation of $\tau(\chi)$. It is usually given in terms of fractions involving the denominator $q-1$, see the proof of Theorem~\ref{GaussSum} below. By definition, our~$\chi_k$ factorises through $G_0/G_1$; this allows us to give the following variant, using fractions involving the denominator $e^\mathrm{t}$.

\begin{thm}\label{GaussSum} We have:
\begin{equation}\label{GaussSumValuation}
v_p\left(j_p \left(\tau(\chi) \right)\right) = \sum_{i=0}^{r-1} \left\{\frac{d(\bar{\chi}) p^i}{e^\mathrm{t}}\right\}.
\end{equation}
\end{thm}

\begin{proof}
By Corollary~\ref{EqualityTuples}, the right-hand side of (\ref{GaussSumValuation}) is equal to
\[\sum_{i=0}^{r-1} \left\{\frac{c({\bar{\chi}})p^i}{q-1} \right\}.\]
Let $s(c(\bar{\chi}))$ denote the sum of digits of the $p$-adic expansion of $c(\bar{\chi})$. Then the previous term is equal to
\[\frac{s(c(\bar{\chi}))}{p-1}\]
by the first two lines of the proof of the Lemma on page~96 in \cite[IV, \S 3]{La}. This in turn is equal to the left-hand side of (\ref{GaussSumValuation}) by Theorem~9 in \cite[IV, \S 3]{La} or by Theorem~27 in \cite{Fr}. (Note that our $\tau(\chi)$ is equal to $\tau(\chi^{-1})$ in \cite{La}, that the distinguished character $\chi_\varphi$ in \cite{La} corresponds to our $\beta^{-1}$ via composing with $\eta_p j_p$ and that the $p$-adic valuation of the number $\omega -1$ in \cite{La} is $(p-1)^{-1}$. Similar remarks apply when Theorem~27 in \cite{Fr} is applied.)
\end{proof}

\section{Computing Epsilon Constants}\label{ComputingEpsilon}

Let $X$ be a smooth projective curve over some finite field~$k$ of characteristic~$p$. We assume that $X$ is geometrically irreducible over ${k}$, i.e.\ that ${k}$ is algebraically closed in the function field $K(X)$ of $X$. Furthermore, let $G$ be a finite subgroup of $\mathrm{Aut}(X/k)$ of order $n$. The goal of this section is to explicitly describe the epsilon constant $\varepsilon(V)$ up to a multiplicative root of unity when $G$ is abelian and the representation~$V$ of $G$ is given by a multiplicative character $\chi: G \rightarrow \bar{\QQ}^\times$, see Theorem~\ref{global epsilon} and Corollary~\ref{EpsilonConstant}. The main ingredient here is Deligne's product formula for epsilon constants as developed in~\cite{De}.

We first recall the notion of epsilon constants. Let
\[\pi: X \rightarrow X/G =: Y\]
denote the canonical projection. The Grothendieck L-function associated with $X$, $G$ and a finite-dimen\-sional $\bar{\QQ}$-representation~$V$ of~$G$ is
\[L(V,t) := \prod_{\fq \in Y} \det\left(1-\mathrm{Frob}(\tilde{\fq})\, t^{\deg(\fq)} \vert V^{I_{\tilde{\fq}}}\right)^{-1};\]
here, $\mathrm{deg}(\fq) := [k(\fq): k]$ denotes the degree of $\fq$, $\tilde{\fq} \in X$ is a (fixed) pre-image of $\fq \in Y$, $I_{\tilde{\fq}}$ denotes the inertia subgroup of $G$ at $\tilde{\fq}$ and $\mathrm{Frob}(\tilde{\fq}) \in G$ denotes a geometric Frobenius automorphism at $\tilde{\fq}$, i.e.\ $\mathrm{Frob}(\tilde{\fq})$ induces the inverse of the usual Frobenius automorphism of the field extension $k(\tilde{\fq})/k(\fq)$. We recall that replacing the geometric with the arithmetic Frobenius automorphism in this definition amounts to defining the Artin L-function which in turn is equal to the Grothendieck L-function $L(V^*,t)$ corresponding to the contragredient representation $V^*$ of $V$, see \cite[Exercise~V.2.21(h)]{Mi}. The Grothendieck L-function satisfies the functional equation
\begin{equation}\label{FunctionalEquation}
L(V,t) = \varepsilon(V) \; t^a \; L(V^*, \frac{1}{|k|t})
\end{equation}
with some $a \in \NN$ and with some $\varepsilon(V) \in \bar{\QQ}$ by \cite[Theorem~VI.13.3]{Mi}. The number $\varepsilon(V)$ is called the epsilon constant associated with $V$ (and $X$, $G$).

Let $K:= K(Y)$ denote the function field of $Y$ and let $\AA_K$ denote the ring of adeles of $K$. We start by explicitly describing additive characters $\psi: \AA_K \rightarrow \bar{\QQ}^\times$ and by giving the Tamagawa measure on $\AA_K$.

For any $\mathfrak{q} \in Y$ let $\hat{\cO}_{Y,\mathfrak{q}}$ denote the completion of the local ring $\cO_{Y,\mathfrak{q}}$ at~$\mathfrak{q}$, let $k(\fq)$ denote its residue field and let $K_\mathfrak{q}$ denote the field of fractions of~$\hat{\cO}_{Y,\fq}$, i.e.\ $K_\mathfrak{q}$ is the completion of the function field $K:=K(Y)$ of $Y$ at $\mathfrak{q}$. The ring $\AA_K$ of adeles of $K$ is then the restricted product, over all $\mathfrak{q} \in Y$, of the fields $K_\mathfrak{q}$ relative to the subrings $\hat{\cO}_{Y,\mathfrak{q}}$. We embed $K$ into $\AA_K$ diagonally and endow $\AA_K$ with its usual topology.

In the next few paragraphs we construct a non-trivial continuous additive character $\psi: \AA_K \to \bar{\QQ}^\times$ that is trivial on $K$. By Proposition~7-15 on p.~270 in \cite{RV}, any other such character~$\tilde{\psi}$ is then given by $\tilde{\psi}(x) = \psi(ax)$ for some unique $a \in K$. While there is a so-called standard character if $K$ is a number field, see Exercise~4 on p.~299 in \cite{RV}, it seems not to be possible to single out a standard character if $K$ is a function field; in particular, the construction outlined in Exercises~5 and~6 on pp.~299-300 in \cite{RV} seems to be flawed. There is however a natural and standard way to parameterize all characters~$\psi$ as above with meromorphic differentials.

Let $\Omega_K$ denote the module of differentials of $K$ over ${k}$, a vector space of dimension~1 over~$K$. For each $\mathfrak{q} \in Y$ let $\Omega_{K_\mathfrak{q}}$ denote $K_\mathfrak{q} \otimes_K \Omega_K$, a vector space over $K_\mathfrak{q}$ of dimension~1.
For each $\mathfrak{q} \in Y$ let
\[\textup{res}_\mathfrak{q}: \Omega_{K_\mathfrak{q}} \to {k}\]
denote the residue map at $\mathfrak{q}$ defined e.g.\ in \cite{Ta2}, see also Theorem~7.14.1 on p.~247 in \cite{Ha}. It can be computed as follows. Let $\pi_\mathfrak{q} \in \cO_{Y,\mathfrak{q}}$ be a local parameter and let $x\,d\pi_\mathfrak{q} \in \Omega_{K_\mathfrak{q}}$. We write $x= \sum_{k = - \infty}^{\infty} \tilde{a}_k \pi_\mathfrak{q}^k$ with `digits' $\tilde{a}_k \in \cO_{Y,\fq}$ representing $a_k \in k(\mathfrak{q})$. Then we have
\[\textup{res}_\mathfrak{q}(x\, d\pi_\mathfrak{q}) = \Tr_{k(\mathfrak{q})/{k}}(a_{-1}) \quad \textup{ in } \quad {k}. \]
For each meromorphic differential $\omega \in \Omega_K$ we now define the additive character
\[\psi_\omega: \AA_K \to \bar{\QQ}^\times, \quad (x_\mathfrak{q})_{\mathfrak{q} \in Y} \mapsto \exp\left(\frac{2\pi i}{p} \Tr_{{k}/\FF_p}\left(\sum_{\mathfrak{q} \in Y} \textup{res}_\mathfrak{q}(x_\mathfrak{q}\omega)\right)\right).\]
Note that the sum on the right-hand side is finite because $x_\mathfrak{q} \in \hat{\cO}_{Y,\mathfrak{q}}$ for almost all $\mathfrak{q}$ and because $\omega$ has at most finitely many poles.

\begin{prop}\label{residue theorem} For each $\omega \in \Omega_K$ the additive character $\psi_\omega$ is trivial on~$K$.
\end{prop}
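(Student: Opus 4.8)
The plan is to reduce the statement to the classical residue theorem for the curve $Y$, which says that for any meromorphic differential $\omega \in \Omega_K$ the sum $\sum_{\fq \in Y} \textup{res}_\fq(f\omega)$ vanishes for every $f \in K$. First I would note that $\psi_\omega$ is visibly a group homomorphism from $\AA_K$ into $\bar\QQ^\times$, since $x \mapsto \sum_\fq \textup{res}_\fq(x_\fq\omega)$ is additive and $\Tr_{k/\FF_p}$ is additive, so it suffices to check that $\psi_\omega$ kills the diagonal image of $K$. Fix $a \in K$ and consider its diagonal image $(a)_{\fq \in Y} \in \AA_K$; here $a$ is an honest element of each completion $K_\fq$, and $a\omega$ is a meromorphic differential with only finitely many poles, so the sum $\sum_{\fq \in Y}\textup{res}_\fq(a\omega)$ is finite and well defined.

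The key step is then to invoke the residue theorem: $\sum_{\fq \in Y}\textup{res}_\fq(a\omega) = 0$ in $k$. This is exactly the statement referenced via \cite{Ta2} and Theorem~7.14.1 on p.~247 in \cite{Ha} (for a smooth projective curve over a perfect field); one only needs $Y$ projective, which holds by hypothesis. Since the inner sum is $0$ in $k$, applying $\Tr_{k/\FF_p}$ gives $0$ in $\FF_p$, hence $\exp(\frac{2\pi i}{p}\cdot 0) = 1$, so $\psi_\omega((a)_\fq) = 1$. This proves $\psi_\omega$ is trivial on $K$.

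The only genuinely delicate point is bookkeeping about what $\textup{res}_\fq(a\omega)$ means for $a \in K$: one must check that the local residues computed in the completions $K_\fq$ agree with the intrinsic global residues of the rational differential $a\omega \in \Omega_K$, so that the residue theorem as stated for $\Omega_K$ applies. This compatibility is part of the standard theory of residues (the residue map $\textup{res}_\fq$ on $\Omega_{K_\fq}$ restricts, along $\Omega_K \hookrightarrow \Omega_{K_\fq}$, to the residue at $\fq$ of a rational differential), so I would simply cite it from \cite{Ta2} or \cite{Ha} rather than reprove it. I expect the continuity of $\psi_\omega$ to be immediate, since $\psi_\omega$ is trivial on the open subgroup $\prod_\fq \hat\cO_{Y,\fq}$ of $\AA_K$ — indeed $\textup{res}_\fq(x_\fq\omega) = 0$ whenever $x_\fq \in \hat\cO_{Y,\fq}$ and $\omega$ is regular at $\fq$, which covers almost all $\fq$, and for the finitely many poles of $\omega$ one shrinks to a smaller open subgroup. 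Thus the main obstacle is not any estimate or construction but simply making sure the global/local residue identification is cleanly cited; the rest is formal.
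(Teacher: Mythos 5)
Your proposal is correct and follows exactly the same route as the paper's proof, which consists of a single sentence citing the residue theorem; you have simply unpacked the reduction (homomorphism property, finiteness of the sum, applying $\Tr_{k/\FF_p}$ to the vanishing residue sum) that the paper leaves implicit. One small citation slip: the residue theorem itself is Theorem 7.14.2 on p.~248 in \cite{Ha}, not Theorem 7.14.1, which is the existence/characterisation of the residue map that you correctly used when identifying local with global residues.
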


\begin{proof}
This follows from the residue theorem, see Corollary on p.~155 in \cite{Ta2} or Theorem 7.14.2 on p.~248 in \cite{Ha}.
\end{proof}

The quotient $\AA_K/K$ is compact by Theorem~5-11 on p.~192 in \cite{RV}. The following proposition computes its volume. Let $g_Y$ denote the genus of $Y$.

\begin{prop}\label{Tamagawa measure}
For each $\mathfrak{q} \in Y$ let $dx_\mathfrak{q}$ be that Haar measure on the additive group $K_\mathfrak{q}$ for which the volume of  $\hat{\cO}_{Y,\mathfrak{q}}$ is equal to $1$. Then the volume of $\AA_K/K$ with respect to the measure $\prod_{\mathfrak{q} \in Y} {\rm d}x_\mathfrak{q}$ on~$\AA_K$ is equal to~$|k|^{g_Y-1}$.
\end{prop}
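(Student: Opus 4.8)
The plan is to reduce this to the Riemann--Roch theorem on $Y$ via the standard adelic description of line bundle cohomology; this is the ``Mittag--Leffler type argument'' alluded to in the introduction. First I would, for each divisor $D=\sum_{\fq}n_\fq[\fq]$ on $Y$, introduce the open compact subgroup
\[\AA_K(D):=\prod_{\fq\in Y}\bigl\{x_\fq\in K_\fq: v_\fq(x_\fq)\ge -n_\fq\bigr\}\subseteq\AA_K\]
(a well-defined restricted product, since $n_\fq=0$ for almost all $\fq$). Its local component at $\fq$ is $\pi_\fq^{-n_\fq}\hat\cO_{Y,\fq}$, which has $dx_\fq$-volume $|k(\fq)|^{n_\fq}=|k|^{n_\fq\deg\fq}$; hence the $\prod_\fq dx_\fq$-volume of $\AA_K(D)$ is $|k|^{\deg D}$.

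Next I would invoke the two classical identifications: $H^0(Y,\cO_Y(D))=K\cap\AA_K(D)$ (the Riemann--Roch space $L(D)$) and $H^1(Y,\cO_Y(D))\cong\AA_K/(K+\AA_K(D))$, the latter being exactly the statement that a prescribed system of principal parts bounded by $D$ is the system of a global rational function if and only if the associated adele class vanishes. Choosing $D$ with $\deg D>2g_Y-2$ makes $H^1$ vanish, i.e.\ $\AA_K=K+\AA_K(D)$. Then a set $F\subseteq\AA_K(D)$ of coset representatives for the finite subgroup $L(D)$ is also a set of coset representatives for $K$ in $\AA_K$: surjectivity follows from $\AA_K=K+\AA_K(D)$ together with $L(D)\subseteq K$, and injectivity modulo $K$ from the fact that for $f_1,f_2\in F$ the difference $f_1-f_2$ already lies in $\AA_K(D)$, so $f_1-f_2\in K$ forces $f_1-f_2\in\AA_K(D)\cap K=L(D)$ and hence $f_1=f_2$. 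Therefore $F$ is a fundamental domain for $K$ in $\AA_K$ and
\[\mathrm{vol}(\AA_K/K)=\mathrm{vol}(F)=\frac{\mathrm{vol}(\AA_K(D))}{|L(D)|}.\]

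Finally, since $Y$ is geometrically irreducible over $k$ one has $H^0(Y,\cO_Y)=k$, so $L(D)$ is a $k$-vector space and $|L(D)|=|k|^{\dim_k L(D)}$; and as $\deg D>2g_Y-2$, Riemann--Roch (with vanishing speciality term) gives $\dim_k L(D)=\deg D+1-g_Y$. Plugging in,
\[\mathrm{vol}(\AA_K/K)=\frac{|k|^{\deg D}}{|k|^{\deg D+1-g_Y}}=|k|^{g_Y-1},\]
which is the asserted value. I do not expect a serious difficulty here; the one point needing care is the measure bookkeeping on the quotient --- confirming that $\mathrm{vol}(\AA_K/K)$, defined via a fundamental domain for $\prod_\fq dx_\fq$, equals $\mathrm{vol}(\AA_K(D))/|L(D)|$ --- which the fundamental-domain argument above handles, together with the mild point that the adelic cohomology identifications and Riemann--Roch must be cited in a form valid over the finite, not algebraically closed, base field $k$; both are standard (e.g.\ Serre, \emph{Groupes alg\'ebriques et corps de classes}). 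Compactness of $\AA_K/K$ (already noted) also drops out, since $\AA_K(D)$ is compact and surjects onto it.
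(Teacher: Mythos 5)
Your proof is correct, but it takes a different route from the paper. You choose an auxiliary divisor $D$ of large degree, so that $H^1(Y,\cO_Y(D))=0$ and $\AA_K=K+\AA_K(D)$; a fundamental domain for the finite group $L(D)$ in the compact set $\AA_K(D)$ is then a fundamental domain for $K$ in $\AA_K$, and Riemann--Roch with vanishing speciality term does the counting. The paper instead works directly with $D=0$: it builds a $3\times 3$ diagram out of the exact sequence $0\to k\to K\to\oplus_\fq K/\cO_{Y,\fq}\to H^1(Y,\cO_Y)\to 0$ (itself obtained from the long exact cohomology sequence of $0\to\cO_Y\to\cM_Y\to\cM_Y/\cO_Y\to 0$), concludes that $\AA_K/K$ is a disjoint union of $|H^1(Y,\cO_Y)|=|k|^{g_Y}$ translates of $(\prod_\fq\cO_{Y,\fq})/k$, and then computes the volume of an explicit fundamental domain for the latter to be $|k|^{-1}$. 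Both are instances of the same adelic Riemann--Roch bookkeeping $\operatorname{vol}(\AA_K/K)=|H^1(D)|\cdot\operatorname{vol}(\AA_K(D))/|H^0(D)|$, evaluated at different $D$. What your choice buys is that you never have to track the cokernel $H^1$ explicitly or exhibit its coset structure inside $\AA_K/K$; the cost is that you invoke the full Riemann--Roch theorem (including vanishing of $H^1$ in high degree) as a black box, whereas the paper only needs $\dim_k H^0(Y,\cO_Y)=1$ and $\dim_k H^1(Y,\cO_Y)=g_Y$, which it re-derives from Serre duality and the definition of the genus. Either way the measure bookkeeping on the quotient is handled the same way --- by an explicit fundamental domain and translation invariance of the Haar measure.
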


\begin{proof}
This is Corollary~1 in Chapter~VI on p.~100 in Weil's book \cite{We}. The first arXiv version of this article contains a modern proof using cohomological methods.
\end{proof}

Recall that $\pi:X \to Y$ is a non-constant finite morphism between geometrically irreducible smooth projective curves over ${k}$ such that the corresponding extension $K(X)/K$ of function fields is a Galois extension with Galois group~$G$ of order $n$. Henceforth, we assume that $G$ is abelian, we fix a multiplicative character
\[\chi: G \to \bar{\QQ}^\times\]
and we denote the corresponding epsilon constant by $\varepsilon(\chi)$.

As above, for each $\fq \in Y$, let ${\rm d} x_\fq$ be that Haar measure on the additive group~$K_\fq$ for which the volume of $\hat{\cO}_{Y,\fq}$ is equal to $1$. Furthermore we define an additive character $\psi_\fq$ on $K_\fq$ that is trivial on $\hat{\cO}_{Y,\fq}$ but not anymore on $\fm_\fq^{-1}$ where $\fm_\fq$ denotes the maximal ideal of $\hat{\cO}_{Y,\fq}$. To this end we fix a generator $\pi_\fq \in \cO_{Y,\fq}$ of the ideal $\fm_\fq$ and, given any $x \in K_\fq$, we write $x= \sum_{k=-\infty}^\infty \tilde{a}_k \pi_\fq^k$ with `digits' $\tilde{a}_k \in \hat{\cO}_{Y,\fq}$ representing $a_k \in k(\fq)$; we then define
\[\psi_\fq: K_\fq \to \bar{\QQ}^\times, \quad x \mapsto \psi_\fq(x):= \exp\left(\frac{2\pi i}{p} \Tr_{k(\fq)/\FF_p}(a_{-1})\right).\]
The restriction of the character $\chi$ to the decomposition group $G_{\tilde{\fq}}$ of some $\tilde{q} \in X$ lying above $\fq$ is denoted by $\chi_{\tilde{\fq}}$. Let $\varepsilon(\chi_{\tilde{\fq}}, \psi_{\fq}, {\rm d} x_\fq)$ denote the local epsilon constant associated with $\chi_{\tilde{\fq}}$, $\psi_\fq$ and ${\rm d} x_\fq$, as defined in \S 4 of~\cite{De}.

For any two complex numbers $w,z$ we write $w \sim z$ if there exists a root of unity $\zeta$ (i.e.\ $\zeta \in \exp(2\pi i \QQ)$) such that $w = \zeta z$. Note that this equivalence relation is finer than the equivalence relation $\sim$ defined in the Appendix of~$\cite{De}$.

\begin{thm}\label{global epsilon}
We have
\[\varepsilon(\chi) \sim |k|^{g_Y-1} \prod_{\mathfrak{q} \in Y} \varepsilon(\chi_{\tilde{\fq}}, \psi_{\fq}, {\rm d} x_\fq).\]
\end{thm}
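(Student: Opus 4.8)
The plan is to deduce Theorem \ref{global epsilon} from the Deligne--Langlands formula expressing the global $\varepsilon$-constant as a product of local ones. Concretely, in \cite{De} (Section 5.11 and the discussion in \S 3--5) the global $\varepsilon$-constant $\varepsilon(\chi)$ associated with $X$, $G$ and $\chi$ equals a product $\prod_{\fq \in Y} \varepsilon(\chi_{\tilde{\fq}}, \psi_\fq', {\rm d}x_\fq')$ of local $\varepsilon$-constants, taken with respect to \emph{some} fixed nontrivial additive character $\psi' = (\psi_\fq')_\fq$ of $\AA_K$ that is trivial on $K$ and \emph{some} fixed Haar measure on $\AA_K$; moreover this product is independent, up to the precise compatibility built into the local $\varepsilon$-factors, of the choices of $\psi'$ and of the global measure, but each individual local factor does depend on the local character $\psi_\fq'$ and local measure ${\rm d}x_\fq'$. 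So the first step is to invoke Proposition \ref{residue theorem}: choosing $\omega \in \Omega_K$ to be a fixed nonzero meromorphic differential, the character $\psi_\omega$ is a legitimate global choice, and its local component at $\fq$ is $x \mapsto \exp(\tfrac{2\pi i}{p}\Tr_{k/\FF_p}(\mathrm{res}_\fq(x\omega)))$. Thus $\varepsilon(\chi) = \prod_{\fq \in Y} \varepsilon(\chi_{\tilde{\fq}}, (\psi_\omega)_\fq, {\rm d}x_\fq)$, using the product measure $\prod_\fq {\rm d}x_\fq$ of Proposition \ref{Tamagawa measure}; one should double-check the normalisation convention in \cite{De} for how the global measure enters (it typically contributes a factor that is the covolume of $\AA_K/K$, which by Proposition \ref{Tamagawa measure} is $|k|^{g_Y-1}$ — this is precisely where that factor in the statement comes from).

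The second step is to compare the local character $(\psi_\omega)_\fq$ with the auxiliary local character $\psi_\fq$ defined just before the theorem. Writing $\omega = u_\fq\, {\rm d}\pi_\fq$ in $\Omega_{K_\fq}$ with $u_\fq \in K_\fq^\times$, one has $\mathrm{res}_\fq(x \omega) = \mathrm{res}_\fq(x u_\fq\, {\rm d}\pi_\fq)$, which is the $\Tr_{k(\fq)/k}$ of the $(-1)$st $\pi_\fq$-adic coefficient of $x u_\fq$; composing with $\Tr_{k/\FF_p}$ and using transitivity of the trace, $(\psi_\omega)_\fq(x) = \psi_\fq(u_\fq x)$. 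Hence $(\psi_\omega)_\fq$ differs from $\psi_\fq$ by the translation $x \mapsto u_\fq x$, where $n_\fq := v_\fq(u_\fq) = v_\fq(\omega)$ is the order of $\omega$ at $\fq$ (zero for all but finitely many $\fq$). The standard behaviour of local $\varepsilon$-constants under such a twist (see \cite[\S 5.4--5.5]{De}) gives $\varepsilon(\chi_{\tilde{\fq}}, (\psi_\omega)_\fq, {\rm d}x_\fq) = \chi_{\tilde{\fq}}(u_\fq)\cdot \|u_\fq\|_\fq^{-1}\cdot \varepsilon(\chi_{\tilde{\fq}}, \psi_\fq, {\rm d}x_\fq)$ up to the conventions (the factor $\chi_{\tilde{\fq}}(u_\fq)$ is to be read via the norm-residue/reciprocity identification, and $\|\cdot\|_\fq$ is the normalised absolute value, so $\|u_\fq\|_\fq = |k(\fq)|^{-n_\fq}$).

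The third step is to take the product over all $\fq$ and collect the correction factors. The $\chi_{\tilde{\fq}}(u_\fq)$-type factors multiply to $\chi$ evaluated on the idele $(u_\fq)_\fq$, and since $\omega$ is a global differential this idele is, up to the principal idele coming from $\omega$ itself, a well-defined class; by the product formula and the fact that $\chi$ is a character of the idele class group (this is exactly the global reciprocity input underpinning the Deligne--Langlands formula), this contributes only a root of unity — in fact the ambiguity here is precisely of the type we are allowed to discard under $\sim$. Similarly $\prod_\fq \|u_\fq\|_\fq^{-1} = \prod_\fq |k(\fq)|^{n_\fq} = \deg(\mathrm{div}(\omega))$-worth of factors, and again by the product formula $\prod_\fq \|u_\fq\|_\fq = 1$, so this contributes nothing. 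Therefore $\varepsilon(\chi) \sim |k|^{g_Y-1}\prod_{\fq \in Y}\varepsilon(\chi_{\tilde{\fq}},\psi_\fq,{\rm d}x_\fq)$, which is the claim.

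The main obstacle I expect is bookkeeping of normalisations: pinning down exactly how \cite{De} packages the global measure into the product formula (whether the $|k|^{g_Y-1}$ appears there or gets absorbed into the local terms), getting the sign/root-of-unity conventions of the local $\varepsilon$-twist formula right, and verifying that the character-twist correction $\prod_\fq\chi_{\tilde{\fq}}(u_\fq)$ really is a root of unity rather than an arbitrary complex number — this last point is where one must carefully use that $\chi$ comes from a \emph{finite-order} character of $G$ (so takes values in roots of unity) together with the compatibility of the local norm-residue maps with the global Artin map. Since we only need the statement up to $\sim$, I expect all of these ambiguities to be harmless, but they are the part of the argument that requires care rather than the structural skeleton above.
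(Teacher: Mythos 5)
Your skeleton matches the paper's argument: invoke the Deligne--Langlands product formula with the Tamagawa measure from Proposition~\ref{Tamagawa measure} and the global character $\psi_\omega$ from Proposition~\ref{residue theorem}, then twist each local $\varepsilon$-constant to replace $\psi_{\omega,\fq}$ by $\psi_\fq$. However, your bookkeeping of the two power-of-$|k|$ factors contains two errors that happen to cancel, so the argument as written does not actually establish the statement.

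First, since the Tamagawa measure (covolume $1$) equals $|k|^{1-g_Y}\prod_\fq\mathrm{d}x_\fq$, formulas (5.11.2) and (5.3) of \cite{De} give
\[\varepsilon(\chi) = |k|^{1-g_Y}\prod_{\fq\in Y}\varepsilon\bigl(\chi_{\tilde{\fq}},\psi_{\omega,\fq},\mathrm{d}x_\fq\bigr),\]
with factor $|k|^{1-g_Y}$, not $|k|^{g_Y-1}$ as you claim. Second, and more seriously, your assertion that $\prod_\fq\|u_\fq\|_\fq=1$ ``by the product formula'' is false: writing $\omega=u_\fq\,\mathrm{d}\pi_\fq$ locally, the elements $u_\fq$ are \emph{different} elements of $K^\times$ for different $\fq$ (they depend on the chosen uniformisers $\pi_\fq$), so $(u_\fq)_\fq$ is not a principal id\`ele and the product formula does not apply to it. Indeed you notice in the same sentence that $\prod_\fq|k(\fq)|^{n_\fq}$ amounts to $\deg(\mathrm{div}(\omega))$ worth of factors, which already contradicts the claim that it equals $1$. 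What is true is that
\[\prod_{\fq\in Y}|k(\fq)|^{n_\fq}=|k|^{\sum_\fq\deg(\fq)\,n_\fq}=|k|^{\deg(\omega)}=|k|^{2g_Y-2},\]
using $\deg(\omega)=2g_Y-2$. Combining the correct factor $|k|^{1-g_Y}$ with $|k|^{2g_Y-2}$ yields $|k|^{g_Y-1}$, which is where the exponent in the theorem actually comes from. Your two mistakes fortuitously cancel, so you land on the right formula, but neither intermediate step is correct. A minor further remark: the paper discards the factors $\underline{\chi}_{\tilde{\fq}}(y_\fq)$ simply because each one individually is a root of unity (as $\chi$ has finite order) and the product is finite; no appeal to global reciprocity or the id\`ele-class interpretation is needed for this point.
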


\begin{proof}[Proof (see also Remarque~(3.1.3.6) in \cite{Lau} for parts of this proof)]
We fix a non-zero meromorphic differential $\omega \in \Omega_K$. By Proposition~\ref{residue theorem}, the differential $\omega$ determines a non-trivial continuous additive character
\[\psi_\omega: \AA_K \to \bar{\QQ}^\times\]
that is trivial on $K$. By Proposition~\ref{Tamagawa measure}, the measure $q ^{1-g_Y} \cdot \prod_{\fq \in Y} {\rm d} x_\fq$ is then a Tamagawa measure on $\AA_K$, i.e.\ the volume of $\AA_K/K$ is equal to $1$. By (5.11.2), (5.3) and Th\'eor\`eme~7.11(iii) in \cite{De}, the epsilon constant~$\varepsilon(\chi)$ can be decomposed as a product of local epsilon constants as follows:
\[\varepsilon(\chi) = |k|^{1-g_Y} \prod_{\fq \in Y} \varepsilon(\chi_{\tilde{\fq}}, \psi_{\omega,\fq}, {\rm d} x_\fq);\]
here the local additive character $\psi_{\omega,\fq}$ is given by
\[\psi_{\omega,\fq}: K_\fq \to \bar{\QQ}^\times, \quad x \mapsto \exp\left(\frac{2\pi i}{p} \Tr_{{k}/\FF_p}(\textup{res}_\fq(x\omega))\right).\]
We now write $\omega = y_\fq {\rm d} \pi_\fq$ with some $y_\fq \in K^\times \subset K_\fq^\times$. Then we obviously have
\[\psi_{\fq}(x) = \psi_{\omega,\fq}(y_{\fq}^{-1}x) \quad \text{ for } x \in K_\fq.\]
Recall that the surjective norm-residue homomorphism
\[\gamma_{\tilde{\fq}}: K_\fq^\times \twoheadrightarrow G_{\tilde{\fq}}\]
(see Section~\ref{ClassFieldTheory}) maps the group $\hat{\cO}_{Y,\fq}^\times$ of units onto the inertia subgroup $I_{\tilde{\fq}} = G_{\tilde{q},0}$ of $G_{\tilde{\fq}}$.
Let now
\[\xymatrix{ \underline{\chi}_{\tilde{\fq}} :  K_\fq^\times \ar[r]^{\hspace{1em} \gamma_{\tilde{\fq}}} & G_{\tilde{\fq}} \ar[r]^{\chi_{\tilde{\fq}}} & \bar{\QQ}^\times}\]
denote the composition of $\chi_{\tilde{\fq}}$ with the norm-residue homomorphism and let $n_\fq$ denote the valuation of $y_\fq$ at $\fq$. By Formula~(5.4) in \cite{De} we then have
\[\varepsilon(\chi_{\tilde{\fq}}, \psi_{\omega,\fq}, {\rm d}x_\fq) = \underline{\chi}_{\tilde{\fq}} (y_\fq)\; |k(\fq)|^{n_\fq} \; \varepsilon(\chi_{\tilde{\fq}}, \psi_\fq, {\rm d}x_\fq) \sim |k(\fq)|^{n_\fq} \; \varepsilon(\chi_{\tilde{\fq}}, \psi_\fq, {\rm d}x_\fq).\]
In fact we have equality here whenever $\pi$ is unramified at $\tilde{\fq}$ and $\omega$ does not have a pole or zero at $\fq$; in particular we have equality for almost all $\fq \in Y$. Moreover we have
\[\sum_{\fq \in Y} \deg(\fq) n_\fq = {\rm deg}(\omega) = 2g_Y-2\]
by Example~1.3.3 on p.~296 in \cite{Ha}. Thus we obtain
\[\varepsilon(\chi) \sim |k| ^{g_Y-1} \prod_{\fq \in Y} \varepsilon(\chi_{\tilde{\fq}}, \psi_\fq, {\rm d}x_\fq),\]
as was to be shown.
\end{proof}

The following lemmas compute $\varepsilon(\chi_{\tilde{q}},\psi_\fq, {\rm d}x_\fq)$ in three different cases. Recall that the character $\chi$ is said to be {\em unramified}, {\em tamely ramified} or {\em weakly ramified} at $\fq$ if $\chi$ is trivial on the inertia subgroup $I_{\tilde{\fq}}=G_{\tilde{\fq},0}$, the first ramification group $G_{\tilde{\fq},1}$ or the second ramification group $G_{\tilde{\fq},2}$, respectively.

The first lemma concerns the unramified case and in particular tells us that the product in Theorem~\ref{global epsilon} is indeed a finite product.

\begin{lem}\label{unramified case}
If $\chi$ is unramified at $\fq$, then
\[\varepsilon(\chi_{\tilde{\fq}}, \psi_\fq, {\rm d}x_\fq) =1.\]
\end{lem}

\begin{proof}
This is stated in Section~5.9 of \cite{De}.
\end{proof}

We now assume that $\chi$ is tamely ramified at $\fq$. Then the character $\underline{\chi}_{\tilde{\fq}}: K_\fq^\times \to \bar{\QQ}^\times$ (defined in the proof of Theorem~\ref{global epsilon}) is trivial on $1+\fm_\fq$ by Theorem~V(6.2) in \cite{Ne} and hence induces a multiplicative character
\[\chi_{k(\fq)}: k(\fq)^\times \to \bar{\QQ}^\times\]
on the finite field $k(\fq)$. Furthermore, as in Section~\ref{ClassFieldTheory}, we introduce the standard additive character
\[\psi_{k(\fq)}: k(\fq) \to \bar{\QQ}^\times, \quad x \mapsto \exp\left(\frac{2\pi i}{p} \Tr_{k(\fq)/\FF_p}(x)\right).\]
and the Gauss sum
\[\tau(\chi_{k(\fq)}) := \sum_{x \in k(\fq)^\times} \chi_{k(\fq)}^{-1}(x)\psi_{k(\fq)}(x)\]
associated with $\chi_{k(\fq)}$ and $\psi_{k(\fq)}$.

\begin{lem}\label{tamely ramified case}
If $\chi$ is tamely ramified at $\fq$, then
\[\varepsilon(\chi_{\tilde{\fq}}, \psi_\fq, {\rm d}x_\fq) \sim \tau(\chi_{k(\fq)}).\]
\end{lem}

\begin{proof} By the previous proposition we may assume that $\chi$ is not unramified at~$\fq$.
Let the character $\psi'_\fq$ be defined by
\[\psi'_\fq(x) = \psi_\fq(\pi_\fq^{-1} x) \quad \text{for } x \in K_\fq.\]
Furthermore let $\overline{{\rm d}x_\fq}$ denote that Haar measure on $K_\fq$ for which the volume of $\fm_\fq$ is equal to $1$, i.e.\ $\overline{{\rm d}x_\fq} = |k(\fq)|\; {\rm d}x_\fq$. By Formulas~(5.3) and~(5.4) and Section~5.10 in \cite{De} we then have
\[\varepsilon(\chi_{\tilde{\fq}},\psi_\fq, {\rm d}x_\fq) = \underline{\chi}_{\tilde{\fq}}(\pi_\fq) \varepsilon(\chi_{\tilde{\fq}},\psi'_\fq, \overline{{\rm d}x_\fq}) \sim \varepsilon(\chi_{\tilde{\fq}},\psi'_\fq, \overline{{\rm d}x_\fq}) = \tau(\chi_{k(\fq)})\]
because $\psi'_\fq$ obviously induces $\psi_{k(\fq)}$ and the character $\chi_{k(\fq)}$ is non-trivial.
\end{proof}

In the final lemma we treat the weakly ramified case.

\begin{lem}\label{weakly ramified case}
If $\chi$ is weakly but not tamely ramified at $\fq$, then
\[\varepsilon(\chi_{\tilde{\fq}}, \psi_\fq, {\rm d}x_\fq) \sim |k(\fq)|.\]
\end{lem}

This is a special case of Lemma~A.1 in the Appendix. Rather than studying that much more general result the reader may prefer the following fairly quick proof.

\begin{proof}
The isomorphisms
\[\fm_\fq/\fm_\fq^2\;\; \tilde{\to}\;\; (1+\fm_\fq)/(1+\fm_\fq^2), \quad \overline{a} \mapsto \overline{1+a},\]
and
\[\fm_\fq^{-2}/\fm_\fq^{-1} \;\; \tilde{\to}\;\;\ \Hom\left(\fm_\fq/\fm_\fq^2, \bar{\QQ}^\times\right), \quad \overline{\gamma} \mapsto (\overline{a} \mapsto \psi_\fq(\overline{\gamma a})),\]
show that there exists a $\gamma \in \fm_\fq^{-2} \backslash \fm_\fq^{-1}$, unique modulo $\fm_\fq^{-1}$, such that
\[\chi_{\tilde{\fq}}(1+a) = \psi_\fq(\gamma a) \quad \text{for all } a \in \fm_\fq.\]
By Formula~(3.4.3.2) in \cite{De} we have
\[\varepsilon(\chi_{\tilde{\fq}}, \psi_\fq, {\rm d}x_\fq) = \int_{\gamma \hat{\cO}_{Y,\fq}^\times} \chi_{\tilde{\fq}}^{-1}(x) \psi_\fq(x) {\rm d}x_\fq = \sum_{b \in k(\fq)^\times} \int_{\gamma \tilde{b}(1+\fm_\fq)} \chi_{\tilde{\fq}}^{-1}(x) \psi_\fq(x) {\rm d}x_\fq\]
where $\tilde{b} \in \hat{\cO}_{Y,\fq}^\times$ represents $b \in k(\fq)^\times$. For all $b \in k(\fq)^\times$ and $a \in \fm_\fq$ we have
\[\chi_{\tilde{\fq}}^{-1}(\gamma \tilde{b}(1+a)) \psi_\fq(\gamma \tilde{b} (1+a))) =  \chi_{\tilde{\fq}}^{-1}(\gamma \tilde{b}) \psi_\fq(\gamma \tilde{b}) \psi_\fq(\gamma(\tilde{b}-1)a).\]
If $\tilde{b}=1$, the function $\chi_{\tilde{\fq}}^{-1}(x) \psi_\fq(x)$ is therefore constant on $\gamma \tilde{b} (1+\fm_\fq)$ and we hence obtain
\[\begin{aligned}
\MoveEqLeft\int_{\gamma \tilde{b} (1+\fm_\fq)} \chi_{\tilde{\fq}}^{-1}(x) \psi_\fq(x) {\rm d}x_\fq
= \chi_{\tilde{\fq}}^{-1}(\gamma) \psi_\fq(\gamma) \mathrm{vol}(\gamma \tilde{b} + \gamma \fm_\fq)\\
&\hspace*{16em} = \chi_{\tilde{\fq}}^{-1}(\gamma) \psi_\fq(\gamma) |k(\fq)|.
\end{aligned}\]
If $b \not= 1$, the substitution $x = \gamma \tilde{b}(1+a)$ gives ${\rm d}x_\fq = |k(\fq)|^2 {\rm d}a$ and hence
\[\int_{\gamma \tilde{b}(1+\fm_\fq)} \chi_{\tilde{\fq}}^{-1}(x) \psi_\fq(x) {\rm d} x_\fq
= \chi_{\tilde{\fq}}^{-1}(\gamma \tilde{b}) \psi_\fq(\gamma \tilde{b}) |k(\fq)|^2 \int_{\fm_\fq} \psi_\fq(\gamma (\tilde{b} -1) a) {\rm d} a =0\]
because $\psi_\fq$ is a non-trivial additive character on $\fm_\fq^{-1}/\hat{\cO}_{Y,\fq}$. Hence
\[\varepsilon(\chi_{\tilde{\fq}}, \psi_\fq, {\rm d}x_\fq) = \chi_{\tilde{\fq}}^{-1}(\gamma)\psi_\fq(\gamma)|k(\fq)| \sim |k(\fq)|,\]
as was to be shown.
\end{proof}

\begin{cor}\label{EpsilonConstant}
If $\chi$ is weakly ramified at all $\fq \in Y$, we have
\[\varepsilon(\chi) \sim |k| ^{g_Y -1} \cdot \prod \tau(\chi_{k(\fq)}) \cdot \prod |k(\fq)|\]
where the first product runs over all $\fq \in Y$ such that $\chi$ is tamely ramified (but not unramified) at $\fq$ and the second product runs over all $\fq \in Y$ such that $\chi$ is not tamely ramified at $\fq$.
\end{cor}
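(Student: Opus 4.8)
The plan is to combine Theorem~\ref{global epsilon} with the three propositions computing the local $\varepsilon$-constants. Theorem~\ref{global epsilon} already gives
\[\varepsilon(\chi) \sim |k|^{g_Y-1} \prod_{\fq \in Y} \varepsilon(\chi_{\tilde{\fq}}, \psi_\fq, {\rm d}x_\fq),\]
so the only remaining task is to evaluate each local factor according to which of the three mutually exclusive and exhaustive cases ($\chi$ unramified at $\fq$, tamely ramified at $\fq$, wildly ramified at $\fq$) applies, and to observe that the product is genuinely finite.

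First I would note that by Proposition~\ref{unramified case}, every $\fq$ at which $\chi$ is unramified contributes a factor of~$1$; since $\chi$ is ramified at only finitely many~$\fq$ (the cover $\pi$ is ramified at only finitely many points), the product over all $\fq \in Y$ reduces to a finite product over the ramified~$\fq$. Next, for each $\fq$ at which $\chi$ is tamely ramified but not unramified, Proposition~\ref{tamely ramified case} gives $\varepsilon(\chi_{\tilde{\fq}}, \psi_\fq, {\rm d}x_\fq) \sim \tau(\chi_{k(\fq)})$; and for each $\fq$ at which $\chi$ is wildly ramified, Proposition~\ref{weakly ramified case} gives $\varepsilon(\chi_{\tilde{\fq}}, \psi_\fq, {\rm d}x_\fq) \sim |k(\fq)|^{\mathrm{cd}_\fq(\chi)-1}$. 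Substituting these into the product and using that the relation $\sim$ is multiplicative (a product of finitely many roots of unity is a root of unity) yields exactly the claimed formula.

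Since all the substantive work has been carried out in Theorem~\ref{global epsilon} and Propositions~\ref{unramified case}, \ref{tamely ramified case} and~\ref{weakly ramified case}, there is no real obstacle here; the proof is essentially bookkeeping. The only point deserving a word of care is the finiteness of the product, which is why I would invoke Proposition~\ref{unramified case} explicitly and recall that $\pi$ has only finitely many ramification points, so that the displayed product over $Y$ collapses to a product over the finite ramification locus before the case distinction is applied.

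\begin{proof}
By Theorem~\ref{global epsilon} we have
\[\varepsilon(\chi) \sim |k|^{g_Y-1} \prod_{\fq \in Y} \varepsilon(\chi_{\tilde{\fq}}, \psi_\fq, {\rm d}x_\fq).\]
Each $\fq \in Y$ falls into exactly one of the following three cases: $\chi$ is unramified at $\fq$, or $\chi$ is tamely ramified (but not unramified) at $\fq$, or $\chi$ is wildly ramified at $\fq$. Since $\pi$ is ramified at only finitely many points of $Y$, all but finitely many $\fq$ fall into the first case, and by Proposition~\ref{unramified case} each of them contributes the factor~$1$; hence the product above is a finite product over the ramified $\fq$. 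For $\fq$ in the second case, Proposition~\ref{tamely ramified case} gives $\varepsilon(\chi_{\tilde{\fq}}, \psi_\fq, {\rm d}x_\fq) \sim \tau(\chi_{k(\fq)})$, and for $\fq$ in the third case, Proposition~\ref{weakly ramified case} gives $\varepsilon(\chi_{\tilde{\fq}}, \psi_\fq, {\rm d}x_\fq) \sim |k(\fq)|^{\mathrm{cd}_\fq(\chi)-1}$. As a product of finitely many roots of unity is again a root of unity, substituting these into the displayed formula yields
\[\varepsilon(\chi) \sim |k|^{g_Y-1} \cdot \prod \tau(\chi_{k(\fq)}) \cdot \prod |k(\fq)|^{\mathrm{cd}_\fq(\chi)-1},\]
where the first product runs over all $\fq \in Y$ such that $\chi$ is tamely ramified (but not unramified) at $\fq$ and the second product runs over all $\fq \in Y$ such that $\chi$ is wildly ramified at $\fq$, as was to be shown.
\end{proof}
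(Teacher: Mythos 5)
Your proof is correct and takes essentially the same approach as the paper, which simply cites Theorem~\ref{global epsilon} together with Propositions~\ref{unramified case}, \ref{tamely ramified case} and~\ref{weakly ramified case}; you have merely spelled out the case distinction, the finiteness of the product, and the multiplicativity of the relation $\sim$.
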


\begin{proof}
This immediately follows from Theorem~\ref{global epsilon} and Lemmas~\ref{unramified case}, \ref{tamely ramified case} and~\ref{weakly ramified case}.
\end{proof}

\section{Computing Equivariant Euler Characteristics}\label{EulerCharacteristics}

Let $k$ and $\pi: X \rightarrow X/G =: Y$ be as in the previous section; in particular, $G$ is assumed to be abelian. We moreover assume in this section that $\pi$ is at most weakly ramified and we fix $q$ such that $\FF_q \subseteq k$. (In the next sections, $q$ will be equal to $p$, but almost all computations in this section can also be done for $\FF_q = k$ and may be of independent interest.)

From the Euler characteristic of a certain equivariant invertible module on $\bar{X} := X \times_{\FF_q} \bar{\FF}_p$ we obtain  a distinguished virtual representation in $K_0(G, \bar{\QQ}_p)$, see below.
The object of this section to find the multiplicity of each irreducible character of $G$ in this representation; see Theorem~\ref{MainFormula} and Corollary~\ref{q=p} for a precise formulation of our results. To prove this we apply the main theorem of~\cite{Ko} and repeatedly re-interpret the resulting terms in a lengthy `metamorphosis'.

For each $\mathfrak{p} \in X$ let $e_\mathfrak{p}^\mathrm{w}$ and $e_\mathfrak{p}^\mathrm{t}$ denote the wild part (i.e., $p$-part) and tame part (i.e., non-$p$-part) of the ramification index~$e_\mathfrak{p}$ of $\pi$ at $\mathfrak{p}$, respectively. Furthermore, let $G_\mathfrak{p} := \{\sigma \in G: \sigma(\mathfrak{p}) = \mathfrak{p}\}$, $I_\mathfrak{p} := \mathrm{ker}(G_\mathfrak{p} \rightarrow \mathrm{Aut}(k(\mathfrak{p})))$ and $G_{\mathfrak{p},1}$ denote the decomposition group, inertia group and first ramification group at~$\mathfrak{p}$, respectively. We recall that the exponent of $G_{\mathfrak{p},1}$ is $p$ and that $e_\mathfrak{p} = \mathrm{ord}(I_\mathfrak{p})$, $e_\mathfrak{p}^\mathrm{w} = \mathrm{ord}(G_{\mathfrak{p},1})$ and $e_\mathfrak{p}^\mathrm{t} = \mathrm{ord} (I_\mathfrak{p}/G_{\mathfrak{p},1})$. If $e_\mathfrak{p}^\mathrm{w} = 1$, the map $\pi$ is said to be {\em tamely ramified} at $\mathfrak{p}$, otherwise we say it is {\em wildly ramified} at $\mathfrak{p}$.

Our assumptions imply the following somewhat unexpected dichotomy.

\begin{lem}\label{dichotomy}
For each $\mathfrak{p} \in X$ we have $e_\mathfrak{p}^\mathrm{t}=1$ or $e_\mathfrak{p}^\mathrm{w} = 1$.
\end{lem}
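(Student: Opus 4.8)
The plan is to reduce the statement to the complete local ring $\cO$ of $X$ at $\mathfrak{p}$ and then to exploit that $G$ (hence the decomposition group $G_\mathfrak{p}$ and its inertia subgroup $I_\mathfrak{p}$) is abelian, together with the hypothesis $G_{\mathfrak{p},2} = 0$. Let $\fm$ denote the maximal ideal of $\cO$, fix a uniformiser $t$, and recall that $I_\mathfrak{p}$ acts trivially on the residue field $k(\mathfrak{p})$. Then the one-dimensional $k(\mathfrak{p})$-vector space $\fm/\fm^2$ carries a $k(\mathfrak{p})$-linear action of $I_\mathfrak{p}$, described by a character $\theta: I_\mathfrak{p} \to k(\mathfrak{p})^\times$, $\sigma \mapsto \overline{\sigma(t)/t}$; this is the map $\chi_{L/K}$ of Section~\ref{ClassFieldTheory} applied to the local extension at $\mathfrak{p}$. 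Its kernel consists of those $\sigma$ with $\sigma(t) - t \in \fm^2$, i.e.\ $\ker\theta = G_{\mathfrak{p},1}$, so $e_\mathfrak{p}^\mathrm{t} = [I_\mathfrak{p}:G_{\mathfrak{p},1}]$ equals the order of the image of $\theta$. It therefore suffices to show that $\theta$ is trivial whenever $G_{\mathfrak{p},1} \neq 1$.

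So assume $G_{\mathfrak{p},1} \neq 1$. Alongside $\theta$ I use the classical injective homomorphism $\theta_1: G_{\mathfrak{p},1}/G_{\mathfrak{p},2} \hookrightarrow \fm/\fm^2$, $\tau \mapsto \overline{\tau(t)/t - 1}$ (Proposition~IV.2.7 in \cite{Se}); because $G_{\mathfrak{p},2} = 0$, this is in fact injective on all of $G_{\mathfrak{p},1}$. The crucial point is the compatibility of $\theta$ and $\theta_1$: for $\sigma \in I_\mathfrak{p}$ and $\tau \in G_{\mathfrak{p},1}$,
\[ \theta_1(\sigma\tau\sigma^{-1}) = \theta(\sigma)\cdot\theta_1(\tau). \]
I would derive this from the independence of $\theta_1$ of the chosen uniformiser: computing $\theta_1(\tau)$ with $\sigma(t)$ in place of $t$ gives $\overline{\tau(\sigma(t))/\sigma(t)-1}$, which, writing $w := \sigma^{-1}\tau\sigma$, equals $\overline{\sigma(w(t)/t - 1)} = \theta(\sigma)\cdot\theta_1(w)$ since $\sigma$ scales $\fm/\fm^2$ by $\theta(\sigma)$; comparing with the value $\theta_1(\tau)$ obtained from $t$ itself and then replacing $\sigma$ by $\sigma^{-1}$ yields the displayed identity. (Alternatively one may quote the standard refinement $[\sigma,\tau] \equiv (\theta(\sigma)-1)\,\theta_1(\tau) \bmod G_{\mathfrak{p},2}$ of the commutator computations in \cite{Se}, IV~\S 2, and read off the conclusion directly.)

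Now the hypothesis that $G$ is abelian comes in: $\sigma\tau\sigma^{-1} = \tau$, so the identity reduces to $\theta_1(\tau) = \theta(\sigma)\,\theta_1(\tau)$ for all $\sigma \in I_\mathfrak{p}$ and $\tau \in G_{\mathfrak{p},1}$. Choosing $\tau \neq 1$, injectivity of $\theta_1$ gives $\theta_1(\tau) \neq 0$ in the $k(\mathfrak{p})$-line $\fm/\fm^2$, hence, $k(\mathfrak{p})$ being a field, $\theta(\sigma) = 1$ for every $\sigma \in I_\mathfrak{p}$. Thus $\theta$ is trivial, i.e.\ $G_{\mathfrak{p},1} = \ker\theta = I_\mathfrak{p}$, so that $e_\mathfrak{p}^\mathrm{t} = 1$. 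This proves the lemma.

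The only genuine obstacle is making the equivariance identity precise; both the reduction to it and the final deduction from abelianness are routine definition-chases. Note that the weak-ramification hypothesis enters only via $G_{\mathfrak{p},2} = 0$, which is precisely what is needed for $\theta_1$ to be injective on all of $G_{\mathfrak{p},1}$.
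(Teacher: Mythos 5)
Your argument is correct and follows essentially the same route as the paper: the paper cites Proposition~9 in \cite{Se}, IV~\S 2, for the conjugation formula $\theta_1(\sigma\tau\sigma^{-1}) = \theta(\sigma)\,\theta_1(\tau)$ and the resulting freeness of the $I_\mathfrak{p}/G_{\mathfrak{p},1}$-action on $G_{\mathfrak{p},1}\setminus\{\mathrm{id}\}$ (valid because $G_{\mathfrak{p},2}=0$ makes $\theta_1$ injective), whereas you re-derive that same identity by hand via the uniformiser-independence of $\theta_1$. Both proofs then conclude identically by observing that, since $G$ is abelian, the conjugation action is trivial, forcing either $G_{\mathfrak{p},1}=1$ or $I_\mathfrak{p}=G_{\mathfrak{p},1}$.
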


\begin{proof}
As $\pi$ is weakly ramified, the conjugation action of $I_\fp/G_{\fp,1}$ on $G_{\fp,1}\backslash \{\mathrm{id}\}$ is free by Proposition~9 in \S 2, Ch.\ IV of \cite{Se}. As $G$ is abelian, this can only happen if $e_\mathfrak{p}^\mathrm{t}=1$ or $e_\mathfrak{p}^\mathrm{w} = 1$.
\end{proof}

Let
\[\bar{\pi}: \bar{X} := X \times_{\FF_q} \bar{\FF}_p \rightarrow Y \times_{\FF_q} \bar{\FF}_p =: \bar{Y}\]
denote the morphism induced by $\pi$.  Let $\bar{X}^\mathrm{w}$ denote the set of $P \in \bar{X}$ such that $\bar{\pi}$ is wildly ramified at~$P$, let $\bar{Y}^\mathrm{w}:= \pi(\bar{X}^\mathrm{w})$ and let the divisor $\bar{D}^\mathrm{w}$ on $\bar{X}$ be defined by
\[\bar{D}^\mathrm{w}:= -\sum_{P \in \bar{X}^\mathrm{w}} [P].\]
For later use, we here also introduce the set $\bar{Y}^\mathrm{t}$ of points $Q\in \bar{Y}$ such that $\bar{\pi}$ is tamely ramified but not unramified above $Q$. The subset $X^\mathrm{w}$ of $X$ and the subsets $Y^\mathrm{w}$ and $Y^\mathrm{t}$ of $Y$ are similarly defined.

By Theorem~2.1(a) in \cite{Ko}, applied to each irreducible component of $\bar{X}$, the equivariant Euler characteristic
\[\chi(G, \bar{X}, \mathcal{O}_{\bar{X}}(\bar{D}^\mathrm{w})) := \left[H^0(\bar{X}, \mathcal{O}_{\bar{X}}(\bar{D}^\mathrm{w}))\right] - \left[H^1(\bar{X}, \mathcal{O}_{\bar{X}}(\bar{D}^\mathrm{w}))\right] \in K_0(G, \bar{\FF}_p)\]
of $\bar{X}$ with values in the $G$-equivariant invertible $\mathcal{O}_{\bar{X}}$-module $\mathcal{O}_{\bar{X}}(\bar{D}^\mathrm{w})$ lies in the image of the (injective) Cartan homomorphism
\[c: K_0(\bar{\FF}_p[G]) \rightarrow K_0(G, \bar{\FF}_p).\]
We define $\psi(G, \bar{X}) \in K_0(\bar{\FF}_p[G])$ by
\[c(\psi(G, \bar{X})) = \chi(G, \bar{X}, \mathcal{O}_{\bar{X}}(\bar{D}^\mathrm{w})).\]

Let
\[\xymatrix{& K_0(G,\bar{\QQ}_p) \ar_d[rd] \\  K_0(\bar{\FF}_p[G]) \ar^c[rr] \ar_e[ru] && K_0(G, \bar{\FF}_p)
}\]
be the so-called $cde$-triangle and let
\[\langle \hspace{1em}, \hspace{1em}  \rangle: K_0(G, \bar{\QQ}_p) \times K_0(G,\bar{\QQ}_p) \rightarrow \ZZ\]
denote the classical character pairing
(given by
\[\langle [M], [N]\rangle = \dim_{\bar{\QQ}_p} \Hom_{\bar{\QQ}_p[G]}(M,N)\]
for any finitely generated $\bar{\QQ}_p[G]$-modules $M$, $N$).

Let $\chi:G \rightarrow \bar{\QQ}^\times$ be a multiplicative character of $G$, which we also consider as a map to the group $\mu_n(\bar{\QQ})$ of $n^\textrm{th}$ roots of unity in $\bar{\QQ}$ and also as an element of~$K_0(G,\bar{\QQ})$. The main theorem of this section, see Theorem~\ref{MainFormula} below, will give a formula for $\langle e(\psi(G,\bar{X})), j_p \chi \rangle$, i.e., the multiplicity of the character $j_p\chi$ in the virtual $\bar{\QQ}_p$-representation $e(\psi(G, \bar{X}))$ of $G$. To state the theorem we need to introduce further notations.

Let $r$ denote the degree of $k$ over $\FF_q$ and let $g_{Y_k}$ denote the genus of the geometrically irreducible curve $Y/k$. For each $\fp \in X$ let
\[\chi_\fp :I_\fp \rightarrow k(\fp)^\times\]
denote the multiplicative character corresponding to the obvious representation of $I_\fp$ on the one-dimensional cotangent space of $X$ at $\fp$. Note that, in contrast to the previous section, the restriction of $\chi$ to the decomposition group $G_\fp$ is not denoted by $\chi_\fp$ but by $\mathrm{Res}^G_{G_\fp}(\chi)$ in this and the next sections.

For each $\fq \in Y$, we let $\mathrm{deg}(\fq) := [k(\fq): \FF_q]$ denote the degree of $\fq$, we fix a point $\tilde{\fq} \in \pi^{-1}(\fq) \subset X$ and a field embedding $\alpha_{\tilde{\fq}}: k(\tilde{\fq}) \hookrightarrow \bar{\FF}_p$ (fixing $\FF_q$) and we write $e_\fq:= e_{\tilde{\fq}}, e_\fq^\mathrm{t} := e_{\tilde{\fq}}^\mathrm{t}$, etc. If $\pi$ is tamely ramified at $\tilde{\fq}$, the character $\chi_{\tilde{\fq}}$ is injective by Proposition~IV.2.7 in \cite{Se} and we define $d_\fq(\chi)$ to be the unique integer $d \in \{0, \ldots, e_\fq - 1\}$ such that the composition
\[\mathrm{Res}^G_{I_{\tilde{\fq}}} (\eta_p j_p \chi): \xymatrix{I_{\tilde{\fq}} \ar@{^(->}[r]& G \ar[r]^\chi & \mu_n(\bar{\QQ}) \ar[r]^{j_p} & \mu_n(\bar{\QQ}_p) \ar[r]^{\eta_p} & \mu_n(\bar{\FF}_p) }\]
is the $d^\textrm{th}$ power of the composition
\[\xymatrix{I_{\tilde{\fq}} \ar[r]^{\chi_{\tilde{\fq}}} & k(\tilde{\fq})^\times \ar[r]^{\alpha_{\tilde{\fq}}} & \bar{\FF}_p.}\]

\begin{thm}\label{MainFormula}
We have
\begin{equation}\label{Formula}
\begin{aligned}
 \langle e(\psi(G, \bar{X})), j_p \chi \rangle
= r(1- g_{Y_k})- \sum_{\mathfrak{q} \in Y^\mathrm{t}} \sum_{i=0}^{\deg(\fq)-1} \left\{\frac{d_\fq(\chi)  
q^i}{e_\mathfrak{q}} \right\} - \sum_{\fq \in Y^\mathrm{w}} \deg(\fq). 
\end{aligned}
\end{equation}
\end{thm}

\begin{proof}
We have a well-defined pairing
\[\langle \hspace{1em}, \hspace{1em} \rangle: K_0(\bar{\FF}_p[G]) \times K_0(G, \bar{\FF}_p) \rightarrow \ZZ\]
given by
\[\langle [P], [M] \rangle = \dim_{\bar{\FF}_p} \Hom_{\bar{\FF}_p[G]}(P, M)\]
for any finitely generated projective $\bar{\FF}_p[G]$-module $P$ and any finitely generated $\bar{\FF}_p[G]$-module $M$. By \cite[15.4b)]{Se2} we have
\[\langle e(x), y \rangle = \langle x, d(y) \rangle\]
for all $x \in K_0(\bar{\FF}_p[G])$ and $y \in K_0(G, \bar{\QQ}_p)$.

Hence the left-hand side of (\ref{Formula}) is equal to
\begin{equation}\label{eins}
\langle \psi(G, \bar{X}), d(j_p \chi) \rangle.
\end{equation}
For any point $P$ of $\bar{X}$ let $I_P$ denote the inertia group of $\bar{\pi}$ at $P$. Recall that $I_P$ is equal to the decomposition group $G_P = \{\sigma \in G: \sigma(P) =P\}$ and that $e_P = \mathrm{ord}(I_P)$ is the ramification index. The multiplicative character $\chi_P: I_P \rightarrow \bar{\FF}_P^\times$ afforded by the representation of $I_P$ on the 1-dimensional cotangent space at $P$ is given by
\[ \chi_P(\sigma) t_P \equiv \sigma(t_P) \quad \mathrm{mod } \quad  (t_P^2)\]
where $\sigma \in I_P$ and $t_P$ is a local parameter at $P$. For any $d \in \ZZ$, the $d^\mathrm{th}$~(tensor) power of $\chi_P$ is denoted by $\chi_P^d$.

By Theorems~4.3 and 4.5 in \cite{Ko}, applied to each of the $r$ irreducible components of $\bar{X}$, we have
\begin{equation}
\begin{aligned}\label{GaloisStructureTheorem}
\MoveEqLeft{\psi(G, \bar{X})}\\
= &- \frac{1}{n} \sum_{P \in \bar{X}} \sum_{d=1}^{e_P^\mathrm{t} -1} e_P^\mathrm{w} \, d \left[\mathrm{Ind}_{I_P}^G\left(\mathrm{Cov}(\chi_P^d)\right)\right]
 + \left(r(1-g_{Y_k}) - |\bar{Y}^\mathrm{w}| \right) [\bar{\FF}_p[G]]
\end{aligned}
\end{equation}
in $K_0(\bar{\FF}_p[G])$; here,
``Cov'' means ``projective cover of''. (Note that the integers $l_P$ and $m_P$ occurring in Theorem~4.5 of \cite{Ko} are as follows for the divisor $\bar{D}^\mathrm{w}$: $l_P=0$ for all $P \in \bar{X}$, $m_P=-1$ for all $P \in \bar{X}^\mathrm{w}$ and $m_P=0$ else. Also note that, despite the fraction $\frac{1}{n}$, the first of the two summands above is an element of $K_0(\bar{\FF}_p[G])$, see Theorem~4.3 in~\cite{Ko}.)

By definition of the decomposition map $d$, the element $d(j_p \chi)$ is equal to the class of the 1-dimensional $\bar{\FF}_p$-representation of $G$ given by the composition
\[\xymatrix{G \ar[r]^\chi & \mu_n(\bar{\QQ}) \ar[r]^{j_p} & \mu_n(\bar{\QQ}_p) \ar[r]^{\eta_p} &\mu_n(\bar{\FF}_p).}\]
Hence (\ref{eins}) is equal to
\begin{equation}
\begin{aligned}\label{zwei}
&{- \frac{1}{n} \sum_{P \in \bar{X}} \sum_{d=1}^{e_P^\mathrm{t} -1} e_P^\mathrm{w} \, d \left\langle \mathrm{Ind}_{I_P}^G \left(\mathrm{Cov}(\chi_P^d)\right), \eta_p j_p \chi \right\rangle}\\
&+ \left(r(1-g_{Y_k}) - |\bar{Y}^\mathrm{w}| \right) \left\langle \bar{\FF}_p[G], \eta_p j_p \chi \right\rangle.
\end{aligned}
\end{equation}
If $H$ is a subgroup of $G$, $P$ a finitely generated projective $\bar{\FF}_p[H]$-module and $M$ a finitely generated $\bar{\FF}_p[G]$-module, then we have
\[\begin{aligned}
\MoveEqLeft {\langle \mathrm{Ind}_H^G(P), M \rangle} =  \dim_{\bar{\FF}_p} \Hom_{\bar{\FF}_p[G]}\left( \bar{\FF}_p[G] \otimes_{\bar{\FF}_p[H]} P, M \right)\\
& =  \dim_{\bar{\FF}_p} \Hom_{\bar{\FF}_p[H]} \left( P, \mathrm{Res}_H^G(M)\right) = \langle P, \mathrm{Res}^G_H(M)\rangle.
\end{aligned}\]
Hence (\ref{zwei}) is equal to
\begin{equation}
\begin{aligned}\label{drei}
&{- \frac{1}{n} \sum_{P \in \bar{X}} \sum_{d=1}^{e_P^\mathrm{t} -1} e_P^\mathrm{w} \, d \left\langle \mathrm{Cov}(\chi_P^d), \mathrm{Res}^G_{I_P}(\eta_p j_p \chi)\right\rangle }
 +  r(1-g_{Y_k} ) - |\bar{Y}^\mathrm{w}|.
\end{aligned}
\end{equation}
It is well-known that, if $R$ is the group ring of a finite group over a field and if $J$ denotes the Jacobson radical of $R$, then $JM=0$ and $\mathrm{Cov}(M)/J \mathrm{Cov}(M) \cong M$ for any simple $R$-module $M$. As the characters $\mathrm{Res}^G_{I_P}(\eta_p j_p \chi)$ and $\chi_P^d$ are simple, we hence obtain
\[\begin{aligned}
\MoveEqLeft {\left\langle \mathrm{Cov}(\chi_P^d), \mathrm{Res}_{I_P}^G(\eta_p j_p \chi) \right\rangle}\\
&=\dim_{\bar{\FF}_p} \Hom_{\bar{\FF}_p[I_P]} \left(\mathrm{Cov}(\chi_P^d), \mathrm{Res}_{I_P}^G(\eta_p j_p \chi) \right)\\
&= \dim_{\bar{\FF}_p} \Hom_{\bar{\FF}_p[I_P]} \left(\chi_P^d, \mathrm{Res}_{I_P}^G(\eta_p j_p \chi) \right)\\
&=\left\{ \begin{array}{ll} 1 & \mathrm{if} \quad \chi_P^d = \mathrm{Res}_{I_P}^G(\eta_p j_p \chi)\\ 0 & \mathrm{else} \end{array}\right.
\end{aligned}\]
for all $P \in \bar{X}$ and $d \in \ZZ$.

Let $P \in \bar{X}$. As the first ramification group $G_{P,1}$ of $\bar{\pi}$ at $P$ is a $p$-group, every character from $I_P$ to $\bar{\FF}_p^\times$ factorises modulo $G_{P,1}$. The induced character $\bar{\chi}_P: I_P/G_{P,1} \rightarrow \bar{\FF}_p^\times$ is injective by Proposition~IV.2.7 in \cite{Se} and hence generates the group $\Hom(I_P/G_{P,1}, \bar{\FF}_p^\times)$. In particular, there is unique integer $d(\chi_P, \chi) \in \{0, \ldots, e_P^\mathrm{t}-1\}$ such that
\[\chi_P^{d(\chi_P,\chi)} = \mathrm{Res}_{I_P}^G(\eta_p j_p \chi).\]
For instance, $d(\chi_P, \chi) = 0$ if $e_P^\mathrm{t} = 1$.

All this allows us to rewrite (\ref{drei}) in the following way:
\begin{equation}
\begin{aligned}\label{vier}
&{-\frac{1}{n} \sum_{P \in \bar{X}} e_P^\mathrm{w} \, d(\chi_P, \chi)}
 + r(1 - g_{Y_k}) - |\bar{Y}^\mathrm{w}|.
\end{aligned}
\end{equation}
For each point $\mathfrak{p} \in X$, we have a canonical bijection between the set
\[\Hom_{\FF_q}(k(\mathfrak{p}), \bar{\FF}_p)\]
of $\FF_q$-embeddings of the residue field $k(\mathfrak{p})$ into $\bar{\FF}_p$ and the fibre in $\bar{X}$ above~$\mathfrak{p}$.
Then, if $P \in \bar{X}$ lies above $\mathfrak{p} \in X$ and $\alpha: k(\mathfrak{p}) \rightarrow \bar{\FF}_p$ corresponds to~$P$,  we have $I_P = I_\mathfrak{p}$ and the character $\chi_P$ is equal to the composition
\[\xymatrix{I_{\mathfrak{p}} \ar[r]^{\chi_\mathfrak{p}} & k(\mathfrak{p})^\times \ar[r]^\alpha & \bar{\FF}_p^\times}.\] Hence (\ref{vier}) is equal to
\begin{equation}
\begin{aligned}\label{fuenf}
&{- \frac{1}{n} \sum_{\mathfrak{p} \in X} \; \sum_{\alpha \in \Hom_{\FF_q}(k(\mathfrak{p}), \bar{\FF}_p)} e_\mathfrak{p}^\mathrm{w} \, d(\alpha \circ \chi_\mathfrak{p}, \chi)}
+ r(1- g_{Y_k}) - \sum_{\mathfrak{q} \in Y^\mathrm{w}} \mathrm{deg}(\mathfrak{q}).
\end{aligned}
\end{equation}

For each $\sigma \in G$ and $\mathfrak{p} \in X$, the character $\chi_{\sigma(\mathfrak{p})}$ is equal to the composition
\[\xymatrix{I_{\sigma(\mathfrak{p})} \ar[r]^\sim & I_\mathfrak{p} \ar[r]^{\chi_\mathfrak{p}} & k(\mathfrak{p})^\times \ar[r]^\sigma & k(\sigma(\mathfrak{p}))^\times}\]
where the first map is given by conjugation with $\sigma$ and the last map is the inverse of the isomorphism induced by the local homomorphism $\sigma^{\#}: \mathcal{O}_{X,\sigma(\mathfrak{p})} \rightarrow \mathcal{O}_{X, \mathfrak{p}}$. It is also easy to see that the character $\mathrm{Res}^G_{I_{\sigma(\mathfrak{p})}}(\eta_p j_p \chi)$ is equal to the composition
\[\xymatrix{I_{\sigma(\mathfrak{p})} \ar[r]^\sim & I_\mathfrak{p} \ar@{->}[rrr]^{\mathrm{Res}_{I_{\mathfrak{p}}}^G(\eta_p j_p \chi)} &&& \bar{\FF}_p^\times}\]
where again the first map is given by conjugation with $\sigma$. In particular we have
\[d(\alpha' \circ \chi_{\sigma(\mathfrak{p})}, \chi) = d(\alpha' \circ \sigma \circ \chi_\mathfrak{p}, \chi)\]
for every $\alpha' \in \Hom_{\FF_q}(k(\sigma(\mathfrak{p})), \bar{\FF}_p)$. Furthermore, if $\alpha'$ runs through the set $\Hom_{\FF_q}(k(\sigma(\mathfrak{p})), \bar{\FF}_p)$, then $\alpha = \alpha' \circ \sigma$ runs through $\Hom_{\FF_q}(k(\mathfrak{p}), \bar{\FF}_p)$.

For each $\mathfrak{q} \in Y$, we have $\pi^{-1}(\mathfrak{q}) = \{ \sigma (\tilde{\mathfrak{q}}): \sigma \in G/G_{\tilde{\mathfrak{q}}} \}$ and hence $|\pi^{-1}(\mathfrak{q})| = \frac{n}{e_\mathfrak{q} f_\mathfrak{q}}$ where $f_\mathfrak{q} = f_{\tilde{\mathfrak{q}}} = [k(\tilde{\mathfrak{q}}): k(\mathfrak{q})]$ denotes the inertia degree of $\pi$ at $\tilde{\mathfrak{q}}$. Furthermore, we recall that $d(\alpha \circ \chi_{\tilde{q}}, \chi) = 0$ if $\fq \not\in Y^\mathrm{t}$ (by Lemma~\ref{dichotomy}). Thus (\ref{fuenf}) is equal to
\begin{equation}
\begin{aligned}\label{sechs}
&{- \sum_{\mathfrak{q} \in Y^\mathrm{t}} \; \sum_{\alpha \in \Hom_{\FF_q}(k(\tilde{\mathfrak{q}}), \bar{\FF}_p)} \frac{1}{e_\mathfrak{q} f_\mathfrak{q}} \; d(\alpha \circ \chi_{\tilde{\mathfrak{q}}}, \chi)}
 + r(1 - g_{Y_k}) - \sum_{\mathfrak{q} \in Y^\mathrm{w}} \mathrm{deg}(\mathfrak{q}).
\end{aligned}
\end{equation}

Now let $\fq \in Y^\mathrm{t}$. The norm residue homomorphism induces a surjective homomorphism
\[\gamma_\mathfrak{q}: k(\mathfrak{q})^\times \rightarrow I_{\tilde{\mathfrak{q}}},\]
see Section~\ref{ClassFieldTheory}. We recall that this implies that $e_\mathfrak{q}$ divides $|k(\mathfrak{q})| - 1$ and that the multiplicative group $k(\mathfrak{q})^\times$ contains all $e_\mathfrak{q}^\mathrm{th}$ roots of unity. As in Section~\ref{ClassFieldTheory}, we now consider the character $\chi_{\tilde{\mathfrak{q}}}: I_{\tilde{\mathfrak{q}}} \rightarrow k(\tilde{\mathfrak{q}})^\times$ as a map
\[\chi_{\tilde{\mathfrak{q}}}: I_{\tilde{\mathfrak{q}}} \rightarrow k(\mathfrak{q})^\times\]
from $I_{\tilde{\mathfrak{q}}}$ to the subgroup $k(\mathfrak{q})^\times$ of $k(\tilde{\mathfrak{q}})^\times$. All $\FF_q$-embeddings $\alpha: k(\tilde{\mathfrak{q}}) \hookrightarrow \bar{\FF}_p$ which extend a given $\FF_q$-embedding $\beta:k(\mathfrak{q}) \hookrightarrow \bar{\FF}_p$ therefore yield the same composition with $\chi_{\tilde{\mathfrak{q}}}$. For each $\beta$ there are $f_\mathfrak{q}$ such extensions $\alpha$. Therefore (\ref{sechs}) is equal to
\begin{equation}\label{sieben}
\begin{aligned}
& -\sum_{\mathfrak{q} \in Y^\mathrm{t}} \frac{1}{e_\mathfrak{q}} \; \sum_{\beta \in \Hom_{\FF_q}(k(\mathfrak{q}), \bar{\FF}_p)} d(\beta \circ \chi_{\tilde{\mathfrak{q}}}, \chi)
+ r(1- g_{Y_k}) - \sum_{\mathfrak{q} \in Y^\mathrm{w}} \mathrm{deg}(\mathfrak{q}).
\end{aligned}
\end{equation}

For each $\mathfrak{q} \in Y^\mathrm{t}$ let $\beta_\mathfrak{q}$ denote the embedding
\[\beta_\fq: \xymatrix{k(\fq) \ar@{^(->}[r] & k(\tilde{\fq}) \ar[r]^{\alpha_{\tilde{\fq}}} & \bar{\FF}_p}.\]
All other embeddings of $k(\fq)$ into $\bar{\FF}_p$ are then given by
\[\xymatrix{k(\mathfrak{q}) \ar[r]^{F^i} & k(\mathfrak{q}) \ar[r]^{\beta_\mathfrak{q}} & \bar{\FF}_p}, \quad i=1, \ldots, \deg(\fq),\]
where $F: k(\mathfrak{q}) \rightarrow k(\mathfrak{q})$, $x \mapsto x^q$, denotes the Frobenius automorphism of $k(\mathfrak{q})$ over $\FF_q$. Thus we can rewrite (\ref{sieben}) as
\begin{equation}\label{acht}
\begin{aligned}
& - \sum_{\mathfrak{q} \in Y^\mathrm{t}} \frac{1}{e_\mathfrak{q}} \sum_{i=1}^{\deg(\fq)} d((\beta_\mathfrak{q} \circ \chi_{\tilde{\mathfrak{q}}})^{q^i}, \chi)
+ r(1- g_{Y_k}) - \sum_{\mathfrak{q} \in Y^\mathrm{w}} \mathrm{deg}(\mathfrak{q}).
\end{aligned}
\end{equation}

For a moment let $\chi_0 = \beta_\mathfrak{q} \circ \chi_{\tilde{\mathfrak{q}}}$, $\chi_1 = \mathrm{Res}_{I_{\tilde{\mathfrak{q}}}}^G(\eta_p j_p \chi)$ and $d= d(\beta_\mathfrak{q} \circ \chi_{\tilde{\mathfrak{q}}}, \chi)$. The identity $\chi_0^d = \chi_1$ defining $d$ implies
\[(\chi_0^{q^i})^{dq^{\deg(\fq) -i}} = \chi_1^{q^{\deg(\fq)}} = \chi_1^{|k(\mathfrak{q})|} = \chi_1\]
because $e_\mathfrak{q}$ divides $|k(\mathfrak{q})| - 1$. Thus we obtain
\[d((\beta_\mathfrak{q} \circ \chi_{\tilde{\mathfrak{q}}})^{q^i}, \chi) \equiv d(\beta_\mathfrak{q} \circ \chi_{\tilde{\mathfrak{q}}}, \chi) q^{\deg(\fq) - i} \quad \textrm{mod} \quad e_\mathfrak{q}.\]
When $i$ runs from $1$ to $\deg(\fq)$, then $\deg(\fq) - i$ runs from $\deg(\fq) -1 $ to $0$. Finally, if $a$ and $m$ are any positive integers, then the residue class of $a$ modulo $m$ in $\{0, \ldots, m-1\}$ is given by $m\{\frac{a}{m}\}$. Therefore (\ref{acht}) is equal to
\begin{equation}\label{neun}
\begin{aligned}
&- \sum_{\mathfrak{q} \in Y^\mathrm{t}} \sum_{i=0}^{\deg(\fq)-1} \left\{\frac{d(\beta_\mathfrak{q} \circ \chi_{\tilde{\mathfrak{q}}}, \chi) q^i}{e_\mathfrak{q}} \right\}
+ r(1- g_{Y_k}) - \sum_{\mathfrak{q} \in Y^\mathrm{w}} \mathrm{deg}(\mathfrak{q}).
\end{aligned}
\end{equation}

As obviously $d(\beta_\fq \circ \chi_{\tilde{\fq}}, \chi) = d_\fq(\chi)$ for every $\fq \in Y^\mathrm{t}$, the term (\ref{neun}) is equal to the right-hand side of (\ref{Formula}) and the proof of Theorem~\ref{MainFormula} is finished.
\end{proof}

\begin{rmk} {\rm
The proof above works not only for $\bar{D}^\mathrm{w}$, but for any divisor $\bar{D} = \sum_{P \in \bar{X}} n_p [P]$ on $\bar{X}$ such that $n_P \equiv -1$ mod $e_P^\mathrm{w}$ and then yields the following more general result:
\begin{equation*}
\begin{aligned}
 \langle e(\psi(G, \bar{X}, \bar{D})), j_p \chi \rangle
= r(1- g_{Y_k}) - \sum_{\fq \in Y^\mathrm{t}} \sum_{i=0}^{\deg(\fq) - 1} g_i(l_\fq,\chi) 
+ \sum_{\fq \in Y} \deg(\fq)m_\mathfrak{q};
\end{aligned}
\end{equation*}
here, $l_P$ and $m_P$ are given by the equation $n_P = (e_P^\mathrm{w} - 1) + (l_P + m_P e_P^\mathrm{t}) e_P^\mathrm{w}$ and $g_i(l_\fq,\chi)$ denotes the unique rational number in the interval $[-\frac{l_\fq}{e_\fq}, 1-\frac{l_\fq}{e_\fq}[$ that is congruent to $\frac{d_\fq(\chi) q^i}{e_\fq}$ modulo $\ZZ$. For details see the first arXiv version of this paper. }
\end{rmk}

For $\fq \in Y^\mathrm{t}$, let the Gauss sum $\tau(\chi_{k(\fq)}) \in \bar{\QQ}$ be defined as in the previous section.

\begin{cor}\label{q=p}
If $q=p$, we have
\[\begin{aligned}
\MoveEqLeft \langle e(\psi(G, \bar{X}), j_p\chi\rangle
=  \; r(1- g_{Y_k}) - \sum_{\fq \in Y^\mathrm{t}} v_p(j_p(\tau(\chi_{k(\fq)})))- \sum_{\fq \in Y^\mathrm{w}} \deg(\fq).
\end{aligned}\]
\end{cor}

\begin{proof}
This follows immediately from Theorem~\ref{MainFormula} (see also (\ref{neun})) and Theorem~\ref{GaussSum}.
\end{proof}

\section{The Main Theorem}\label{MainTheoremSection}

We keep the assumptions about $k$, $X$ and $G$ introduced in the previous section and continue to assume that $\pi$ is at most weakly ramified, but we no longer assume that $G$ is abelian.

The object of this section is to prove our `strong' formula stated in the introduction and in  Theorem~\ref{FormulationMainTheorem} below; it relates the epsilon constants associated with $X$ and finite-dimensional complex representations of $G$ to an equivariant Euler characteristic of $\bar{X} := X \times_{\FF_p} \bar{\FF}_p$. The main ingredients in its proof are Artin's induction theorem, Corollary~\ref{q=p} and Corollary~\ref{EpsilonConstant}.

\begin{rmk}{\em
Note that, in this section, our base field is $\FF_p$ rather than $\FF_q$ or $k$. Euler characteristics of the geometrically irreducible curve $X/k$ (or of $X \times_{k} \bar{\FF}_p/\bar{\FF}_p)$ are finer invariants than the corresponding Euler characteristics of $X/\FF_p$ and, as already explained in Remark~5.3 of \cite{Ch}, it seems not to be possible to relate these finer invariants to epsilon constants associated with $X$ and~$G$. In this paper, this becomes apparent in the transition from Theorem~\ref{MainFormula} to Corollary~\ref{q=p}. More concretely, if for instance $\fq \in Y^\mathrm{t}$ is a $k$-rational point, the sum $\sum_{i=0}^{\deg(\fq)-1} \left\{\frac{d_\fq(\chi) q^i}{e_\fq}\right\}$ occurring in (\ref{neun}) is reduced to $\frac{d_\fq(\chi)}{e_\fq}$ and seems not to be related to the $p$-adic valuation of the corresponding Gauss sum.
}\end{rmk}

We use notations similar to those introduced in the previous chapter; most notably, the element $\psi(G,\bar{X}) \in K_0(\bar{\FF}_p[G])$ is defined by the equation
\[c(\psi(G, \bar{X})) = [H^0(\bar{X}, \cO_{\bar{X}}(\bar{D}^\mathrm{w}))] - [H^1(\bar{X}, \cO_{\bar{X}}(\bar{D}^\mathrm{w}))] \quad \textrm{ in } \quad K_0(G,\bar{\FF}_p)\]
where $\bar{D}^\mathrm{w}$ is the divisor
\[\bar{D}^\mathrm{w} = \bar{D}^\mathrm{w}(G,X) = - \sum_{P \in \bar{X}^\mathrm{w}}[P].\]
For every representation $V$ of $G$ and for every subgroup $H$ of $G$, let $V^H$ denote the subspace of $V$ fixed by $H$.

Let $V$ be a $\bar{\QQ}$-representation of $G$. After applying $j_p:\bar{\QQ} \hookrightarrow \bar{\QQ}_p$ to epsilon constant $\varepsilon(V)$, we may consider its $p$-adic valuation $v_p(j_p(\varepsilon(V)))$. The following theorem computes this $p$-adic valuation in terms of the element $\psi(G, \bar{X})$ introduced above.

\begin{thm}\label{FormulationMainTheorem}
For every finite-dimensional $\bar{\QQ}$-representation $V$ of $G$ we have
\begin{equation}\label{MainTheorem}
- v_p(j_p(\varepsilon(V))) = \langle e(\psi(G,\bar{X})), j_pV\rangle + \sum_{Q \in \bar{Y}^\mathrm{w}} \dim_{\bar{\QQ}}(V^{G_{\tilde{Q}}}).
\end{equation}
In particular, the $p$-adic valuation of $j_p(\varepsilon(V))$ is an integer.
\end{thm}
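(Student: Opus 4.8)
The plan is to reduce, by Artin's induction theorem, to the case in which $G$ is cyclic and $V$ is a one-dimensional character, and then to read off both sides of~(\ref{MainTheorem}) from the $\varepsilon$-constant formula (Corollary~\ref{EpsilonConstant}) and the equivariant Euler characteristic formula (Corollary~\ref{FinalFormula}). First I would note that~(\ref{MainTheorem}) is equivalent to the equality $E(G,X)=F(G,X)$ in $K_0(\bar\QQ_p[G])_\QQ$, where $F(G,X):=e(\psi(G,\bar X))+\sum_{Q\in\bar Y^\mathrm{w}}[\mathrm{Ind}_{G_{\tilde Q}}^G({\bf 1})]$: once the $\varepsilon$-constant conventions of the introduction and of Section~\ref{ComputingEpsilon} are matched, $\langle E(G,X),j_pV\rangle=-v_p(j_p(\varepsilon(V)))$, while $\langle[\mathrm{Ind}_{G_{\tilde Q}}^G({\bf 1})],j_pV\rangle=\dim_{\bar\QQ}(V^{G_{\tilde Q}})$ by Frobenius reciprocity, and the character pairing is perfect. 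As recalled in the introduction, $\mathrm{Res}^G_H E(G,X)=E(H,X)$ for every subgroup $H$, and Mackey's double coset formula applied to the terms $\mathrm{Ind}_{G_{\tilde Q}}^G({\bf 1})$ shows that likewise $\mathrm{Res}^G_H F(G,X)=F(H,X)$. Since $X\to X/H$ is again weakly ramified ($H\cap G_{\fp,2}=1$) and has the same field of constants $k$ (as $H$ fixes $k$), Artin's induction theorem reduces the theorem to the case $G$ cyclic, hence abelian.

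Assume now $G$ abelian. As the one-dimensional characters span $K_0(G,\bar\QQ)$, it suffices to take $V=\chi\colon G\to\bar\QQ^\times$. For the left-hand side I would apply Corollary~\ref{EpsilonConstant}: since $\pi$ is weakly ramified, $\mathrm{cd}_\fq(\chi)=2$ at every $\fq$ at which $\chi$ is wildly ramified (the norm-residue homomorphism carries $1+\fm_\fq^2$ onto the trivial higher ramification group), so that local factor is just $|k(\fq)|$, and using $v_p(j_p(|k|))=r$ and $v_p(j_p(|k(\fq)|))=\deg(\fq)$ one obtains
\[
-v_p(j_p(\varepsilon(\chi)))=r(1-g_{Y_k})-\sum_{\fq}v_p(j_p(\tau(\chi_{k(\fq)})))-\sum_{\fq}\deg(\fq),
\]
where the first sum runs over the $\fq$ at which $\chi$ is tamely but not unramified, and the second over the $\fq$ at which $\chi$ is wildly ramified.

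For the right-hand side I would use Corollary~\ref{FinalFormula} (applicable since the base field is $\FF_p$, so $q=p$, and $G$ is abelian), which gives
\[
\langle e(\psi(G,\bar X)),j_p\chi\rangle=r(1-g_{Y_k})-\sum_{\fq\in Y^\mathrm{w}}\deg(\fq)-\sum_{\fq\in Y^\mathrm{t}}v_p(j_p(\tau(\chi_{k(\fq)}))),
\]
and then compute the correction term directly: the fibre of $\bar Y\to Y$ over $\fq$ has $\deg(\fq)$ points, the decomposition group of $\bar\pi$ at each of them is $I_{\tilde\fq}$, and $\dim_{\bar\QQ}(\chi^{I_{\tilde\fq}})$ is $1$ if $\chi$ is unramified at $\fq$ and $0$ otherwise, so $\sum_{Q\in\bar Y^\mathrm{w}}\dim_{\bar\QQ}(V^{G_{\tilde Q}})$ equals $\sum\deg(\fq)$ over the $\fq\in Y^\mathrm{w}$ at which $\chi$ is unramified. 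It remains to reconcile the index sets. By Lemma~\ref{dichotomy}, if $\chi$ is wildly ramified at $\fq$ then $e_\fq^\mathrm{w}\neq1$ and $e_\fq^\mathrm{t}=1$ (so $I_{\tilde\fq}=G_{\tilde\fq,1}$), whence $Y^\mathrm{w}$ is the disjoint union of $\{\fq:\chi\text{ wildly ramified}\}$ and $\{\fq\in Y^\mathrm{w}:\chi\text{ unramified}\}$; this makes the two degree-sums collapse to $-\sum\deg(\fq)$ over the $\fq$ at which $\chi$ is wildly ramified. Similarly, if $\chi$ is tamely but not unramified at $\fq$ then $e_\fq^\mathrm{t}\neq1$, hence $e_\fq^\mathrm{w}=1$ and $\fq\in Y^\mathrm{t}$; and the $\fq\in Y^\mathrm{t}$ at which $\chi$ is unramified contribute trivial Gauss sums $\tau(\chi_{k(\fq)})=-1$ of valuation zero, so the two Gauss-sum sums agree. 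Substituting, the right-hand side of~(\ref{MainTheorem}) reduces to the displayed value of $-v_p(j_p(\varepsilon(\chi)))$, which finishes the reduced case and hence the theorem.

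The final integrality claim is then immediate: the right-hand side of~(\ref{MainTheorem}) is an integer, being a character pairing of genuine (virtual) $\bar\QQ_p$-representations plus a sum of dimensions of fixed subspaces. I expect the main obstacle to be the first step — checking that the added terms $\sum_{Q\in\bar Y^\mathrm{w}}[\mathrm{Ind}_{G_{\tilde Q}}^G({\bf 1})]$ make $F(G,X)$ compatible with restriction, i.e.\ that the Mackey double coset decomposition of $\mathrm{Res}^G_H\mathrm{Ind}_{G_{\tilde Q}}^G({\bf 1})$ matches the wildly ramified points of $X\to X/H$ together with the behaviour of $\psi$ under restriction coming from \cite{Ko}; the remaining work is bookkeeping with ramification groups, conductors and Gauss sums already prepared in Sections~\ref{ClassFieldTheory}, \ref{ComputingEpsilon} and~\ref{EulerCharacteristics}.
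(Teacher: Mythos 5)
Your overall strategy --- rephrase the theorem as the identity $E(G,X)=F(G,X)$ in $K_0(\bar{\QQ}_p[G])_\QQ$, invoke compatibility with restriction to reduce via Artin induction to cyclic $G$, and then compute both sides from Corollary~\ref{EpsilonConstant} and Corollary~\ref{FinalFormula} --- is exactly the paper's. The cyclic computation is done correctly, including the key bookkeeping: the conductor is $2$ at wildly ramified points in the weakly ramified case, the fibre of $\bar Y\to Y$ over $\fq$ has $\deg(\fq)$ points with $G_{\tilde Q}=I_{\tilde\fq}$, and Lemma~\ref{dichotomy} forces the index sets $\{\fq:\chi\text{ wild}\}$, $\{\fq:\chi\text{ tame not unram.}\}$ and $Y^\mathrm{w}$, $Y^\mathrm{t}$ to differ only by points contributing zero (trivial Gauss sums or the $\sum_{Q\in\bar Y^\mathrm{w}}\dim V^{G_{\tilde Q}}$ correction). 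A minor omission: the cyclic computation needs Chinburg's Remark~5.4 of~\cite{Ch} to identify the $\varepsilon$-constant of $X/\FF_p$ with that of $X/k$ before Corollary~\ref{EpsilonConstant} (which is proved over the constant field $k$) can be applied, and you should say so.

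The genuine gap is in the restriction compatibility of $F(G,X)$, which you yourself flag at the end as the ``main obstacle'' but then treat as settled in one line (``Mackey's double coset formula \dots shows that likewise $\mathrm{Res}^G_H F(G,X)=F(H,X)$''). Mackey alone does not suffice. The wildly ramified locus of $\bar\pi_H\colon\bar X\to\bar X/H$ is strictly smaller than the preimage in $\bar X/H$ of $\bar Y^\mathrm{w}$: there is a nonempty set $\mathcal{S}$ of points $R\in\bar X/H$ over $\bar Y^\mathrm{w}$ at which $\bar\pi_H$ is only \emph{tamely} ramified (because $H\cap G_{\tilde R,1}=1$ even though $G_{\tilde R,1}\neq1$). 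Mackey turns $\sum_{Q\in\bar Y^\mathrm{w}}\mathrm{Res}^G_H\mathrm{Ind}^G_{G_{\tilde Q}}(\mathbf{1})$ into a sum indexed by $\bar\pi_H(\bar X^\mathrm{w})$, not by $(\bar X/H)^\mathrm{w}$, so there is an excess term $\sum_{R\in\mathcal{S}}\mathrm{Ind}^H_{H_{\tilde R}}(\mathbf{1})$. This must be cancelled by the discrepancy between $\mathrm{Res}^G_H\psi(G,\bar X)=\psi(H,\bar X,D^\mathrm{w}(G,\bar X))$ and $\psi(H,\bar X)=\psi(H,\bar X,D^\mathrm{w}(H,\bar X))$, because the divisor $D^\mathrm{w}(G,\bar X)$ still has coefficient $-1$ at points of $\mathcal{S}$ whereas $D^\mathrm{w}(H,\bar X)$ has coefficient $0$ there. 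Working out, via the explicit formula of \cite[Theorem~4.5]{Ko} (quoted as equation~(\ref{GaloisStructureTheorem}) in Section~\ref{EulerCharacteristics}), that
\[
\mathrm{Res}^G_H\psi(G,\bar X)=\psi(H,\bar X)-\sum_{R\in\mathcal{S}}\bigl[\mathrm{Ind}^H_{H_{\tilde R}}(\mathbf{1})\bigr]
\]
is precisely what makes the cancellation happen, and it is the one nontrivial idea the proof needs beyond the calculations you have already sketched. Until this is supplied, the reduction to cyclic $G$ is not justified, so the argument is incomplete at its structural core.
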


\begin{rmk}{\em
Using the notation $E(G,X)$ introduced in the next section, Theorem~\ref{FormulationMainTheorem} can obviously be reformulated in the following succinct way.}

We have:
\[E(G,X) = e(\psi(G,\bar{X})) + \sum_{Q \in \bar{Y}^\mathrm{w}} \left[\mathrm{Ind}_{G_{\tilde{Q}}}^G ({\bf 1})\right] \quad \textrm{in} \quad K_0(\bar{\QQ}_p[G])_\QQ,\]
where $\bf 1$ means the trivial representation of rank one. In particular, $E(G,X)$ belongs to the integral part $K_0(\bar{\QQ}_p[G])$ of $K_0(\bar{\QQ}_p[G])_\QQ$.
\end{rmk}

\begin{proof}
By Artin's induction theorem \cite[Corollaire of Th\'eor\`eme 17]{Se2}, every element of $K_0(\bar{\QQ}[G])_\QQ$ can be written as a rational linear combination of representations induced from multiplicative characters of cyclic subgroups of~$G$. It therefore suffices to prove the following three statements.
\begin{enumerate}
\item[(a)] When applied to the direct sum $V_1 \oplus V_2$ of two finite-dimensional $\bar{\QQ}$-representations $V_1$ and $V_2$ of $G$, each side of equation (\ref{MainTheorem}) is equal to the sum of the corresponding values for $V_1$ and $V_2$.
\item[(b)] Each side of equation~(\ref{MainTheorem}) is invariant under induction with respect to~$V$.
\item[(c)] Equation~(\ref{MainTheorem}) is true if $G$ is a cyclic group and $V$ corresponds to a multiplicative character $\chi$ of $G$.
\end{enumerate}
Statement~(a) is obvious for the right-hand side of (\ref{MainTheorem}). For the left-hand side, this follows from (5.2) and (5.11.2) in \cite{De}.

It is well-known that the L-function $L(V,t)$ is invariant under induction with respect to $V$. This immediately implies that the left-hand side of (\ref{MainTheorem}) is invariant under induction. We now prove that the right-hand side of (\ref{MainTheorem}) is invariant under induction as well. Let $H$ be a subgroup of $G$ and let $\bar{\pi}_H: \bar{X} \rightarrow \bar{X}/H$ denote the corresponding projection. Furthermore, let $W$ be a finite-dimensional $\bar{\QQ}$-representation of~$H$. Then the right-hand side of (\ref{MainTheorem}) applied to $V=\mathrm{Ind}_H^G(W)$ is equal to
\begin{equation}
\langle e(\psi(G, \bar{X})), j_p \mathrm{Ind}_H^G(W) \rangle + \sum_{Q \in \bar{Y}^\mathrm{w}} \langle \mathbf{1}, \mathrm{Res}^G_{G_{\tilde{Q}}}(\mathrm{Ind}^G_H(W))\rangle
\end{equation}
where $\mathbf{1}$ means the trivial representation of dimension~1. By Frobenius reciprocity \cite[Th\'eor\`eme~13]{Se2}, this is equal to
\begin{equation}\label{22}
\langle e(\mathrm{Res}^G_H(\psi(G, \bar{X}))), j_p W \rangle + \sum_{Q \in \bar{Y}^\mathrm{w}} \langle \mathrm{Res}^G_H(\mathrm{Ind}^G_{\tilde{Q}}(\mathbf{1})), W \rangle.
\end{equation}
For each $Q \in \bar{Y}$ we have a canonical bijection
\[ \{H\sigma G_{\tilde{Q}}: \sigma \in G \} \rightarrow \bar{\pi}_H(\bar{\pi}^{-1}(Q)), \quad H\sigma G_{\tilde{Q}} \mapsto \bar{\pi}_H(\sigma(\tilde{Q})).\]
Therefore, Mackey's double coset formula \cite[Theorem~44.2]{CR1} implies that (\ref{22}) is equal to
\begin{equation}\label{23}
\begin{aligned}
\MoveEqLeft \langle e(\mathrm{Res}^G_H(\psi(G, \bar{X})), j_p W \rangle + \sum_{R \in \bar{\pi}_H(\bar{X}^\mathrm{w})} \langle \mathrm{Ind}^H_{H_{\tilde{R}}}(\mathbf{1}), W \rangle\\
&= \langle e(\mathrm{Res}^G_H(\psi(G, \bar{X})), j_p W \rangle + \sum_{R \in \bar{\pi}_H(\bar{X}^\mathrm{w})} \dim_{\bar{\QQ}}(W^{H_{\tilde{R}}}),
\end{aligned}
\end{equation}
where, of course, $\tilde{R}$ is a point in $\bar{\pi}_H^{-1}(R)$ and $H_{\tilde{R}} := G_{\tilde{R}} \cap H$. Let $\mathcal{S}$ denote the set of all points $R \in \bar{X}/H$ such that $\bar{\pi}: \bar{X} \rightarrow \bar{X}/G = Y$ is wildly ramified at $\tilde{R}$, but $\bar{\pi}_H: \bar{X} \rightarrow \bar{X}/H$ is tamely ramified at $\tilde{R}$. Then for each $R \in \mathcal{S}$, the coefficient of the divisor $\bar{D}^\mathrm{w}$ at $\tilde{R}$ is $-1=(e_R(H)-1) + (-1) e_R(H)$, where $e_R(H)$  denotes the ramification index of $\bar{\pi}_H$ at $\tilde{R}$. Using \cite[Theorem~4.5]{Ko}, we therefore obtain:
\begin{equation*}
\begin{aligned}
\MoveEqLeft \mathrm{Res}_H^G(\psi(G, \bar{X})) 
= \psi(H, \bar{X}) + \sum_{R \in \mathcal{S}} \left(\sum_{d=1}^{e_R(H)-1}\left[\mathrm{Ind}_{H_{\tilde{R}}}^H(\chi_{\tilde{R}}^{-d})\right] - \left[\bar{\FF}_p[H]\right]\right)\\
& = \psi(H, \bar{X}) - \sum_{R \in \mathcal{S}} \left[\mathrm{Ind}^H_{H_{\tilde{R}}}(\mathbf{1})\right],
\end{aligned}
\end{equation*}
where $\chi_{\tilde{R}}$ also denotes the restriction of $\chi_{\tilde{R}}: G_{\tilde{R}} \rightarrow \bar{\FF}_p$ to $H_{\tilde{R}}$. Therefore (\ref{23}) is equal to
\begin{equation}
\begin{aligned}
\MoveEqLeft \langle e(\psi(H, \bar{X})), j_p W \rangle - \sum_{R \in \mathcal{S}} \dim_{\bar{\QQ}}(W^{H_{\tilde{R}}}) + \sum_{R \in \bar{\pi}_H(\bar{X}^\mathrm{w})} \dim_{\bar{\QQ}} (W^{H_{\tilde{R}}})\\
&= \langle e(\psi(H, \bar{X})), j_p W \rangle + \sum_{R \in (\bar{X}/H)^\mathrm{w}}\dim_{\bar{\QQ}}(W^{H_{\tilde{R}}}),
\end{aligned}
\end{equation}
where $(\bar{X}/H)^\mathrm{w}$ denotes the set of all $R \in \bar{X}/H$ such that $\bar{\pi}_H$ is not tamely ramified at $\tilde{R}$. This finishes the proof of statement~(b).

We finally prove statement~(c). Let $G$ be cyclic and let $\chi$ be a multiplicative character of $G$. Let $r$ denote the degree of $k$ over $\FF_p$. By \cite[Remark~5.4]{Ch}, the epsilon constant of $X/\FF_p$ is the same as the epsilon constant of $X$ considered as a scheme over $k$.  Hence, by Corollary~\ref{EpsilonConstant} we have
\begin{equation}\label{24}
 - v_p(j_p(\varepsilon(\chi))) = r(1- g_{Y_k}) - \sum v_p(j_p(\tau(\chi_{k(\fq)}))) - \sum \deg(\fq).
\end{equation}
Here the first sum runs over all $\fq \in Y$ such that $\chi$ is tamely ramified at $\fq$ and the second sum runs over all $\fq \in {Y}$ such that $\chi$ is not tamely ramified at~$\tilde{q}$. These two sets differ from $Y^\mathrm{t}$ and $Y^\mathrm{w}$ by the set of those $\fq \in \bar{Y}^\mathrm{w}$ such that $\chi$ vanishes on $G_{\tilde{\fq},1}$. For such $\fq$ we have $G_{\tilde{\fq}} = G_{\tilde{\fq},1}$ by Lemma~\ref{dichotomy} and the corresponding Gauss sum is trivial. Hence (\ref{24}) is equal to
\begin{equation}
r(1-g_{Y_k}) - \sum_{\fq \in Y^\mathrm{t}} v_p(j_p(\tau(\chi_\fq))) - \sum_{\fq \in Y^\mathrm{w}} \deg(\fq) + \sum_{\fq \in Y^\mathrm{w}: \mathrm{Res}^G_{G_{\tilde{\fq}}}(\chi) = \mathbf{1}} \deg(\fq)
\end{equation}
which in turn is equal to
\begin{equation}
\langle e(\psi(G, \bar{X})), j_p \chi \rangle + \sum_{Q \in \bar{Y}^\mathrm{w}} \langle \mathbf{1}, \mathrm{Res}^G_{G_{\tilde{Q}}} (\chi) \rangle
\end{equation}
by Corollary~\ref{q=p}, as was to be shown.
\end{proof}

\section{A Weak, but General Formula}\label{GeneralFormula}

Let $k$, $X$ and $G$ be as in the previous section.
Without assuming any condition on the type of ramification of the associated projection $\pi: X \rightarrow Y$ we give, in Theorem~\ref{WeakTheorem} below, a relation between the equivariant Euler characteristic $\chi(G,\bar{X},\cO_{\bar{X}})$ of $\bar{X} := X \times_{\FF_p} \bar{\FF}_p$ and epsilon constants associated with~$X$ and finite-dimensional complex representations of $G$. After reducing to the so-called domestic case (i.e., when $p$ does not divide the order of $G$) using Artin's induction theorem for modular representation theory, this relation follows from Theorem~5.2 in \cite{Ch}. We call this relation `weak' because it does not capture any equivariant information given by the $p$-part of $G$.

We begin by explaining how epsilon constants define a natural element $E(G,X)$ in $K_0(\bar{\QQ}_p[G])_\QQ$.

\begin{definition}{\em
By \cite[(5.2)]{De}, associating with every finite-dimensional $\bar{\QQ}$-representation $V$ of $G$ the $p$-adic valuation $v_p(j_p(\varepsilon(V)))$ of the epsilon constant $j_p(\varepsilon(V))$ defines a homomorphism from $K_0(\bar{\QQ}[G])_\QQ := K_0(\bar{\QQ}[G]) \otimes \QQ$ to $\QQ$. As the classical character pairing
$\langle \hspace{1em}, \hspace{1em}\rangle: K_0(\bar{\QQ}_p[G]) \times K_0(\bar{\QQ}_p[G]) \rightarrow \ZZ$
is non-degenerate, there is a unique element $E(G,X) \in K_0(\bar{\QQ}_p[G])_\QQ$ such that
\[\langle E(G,X), j_p(V) \rangle = - v_p(j_p(\varepsilon(V))) \]
for all finite-dimensional $\bar{\QQ}$-representations $V$ of $G$. We call $E(G,X)$ the {\em (virtual) epsilon representation associated with the action of $G$ on $X$}.
}\end{definition}

It follows for instance from the definition of $L(V,t)$ that, for every $\alpha \in \mathrm{Aut}(\bar{\QQ})$, we have $\varepsilon(\alpha(V)) = \alpha(\varepsilon(V))$ and that therefore $E(G,X)$ does not depend on the embedding $j_p$. Furthermore, it follows from Frobenius reciprocity and from the definition of Artin L-functions and epsilon constants that, when restricted to a subgroup~$H$ of~$G$, the element $E(G,X)$ becomes $E(H,X)$.

Recall that
\[d: K_0(\bar{\QQ}_p[G]) \rightarrow K_0(G, \FF_p)\]
denotes the decomposition map.

\begin{thm}\label{WeakTheorem}
We have
\begin{equation}
d(E(G,X)) = \chi(G,\bar{X}, \cO_{\bar{X}}) \quad \textrm{ in } \quad K_0(G,\bar{\FF}_p)_\QQ.
\end{equation}
In particular, $d(E(G,X))$ lies in the integral part $K_0(G, \bar{\FF}_p)$ of $K_0(G,\bar{\FF}_p)_\QQ$.
\end{thm}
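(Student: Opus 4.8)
The plan is to prove the asserted equality in $K_0(G,\bar{\FF}_p)_\QQ$ by reducing, via Artin's induction theorem in modular representation theory, to the case in which $G$ is cyclic of order prime to~$p$, and then to invoke Chinburg's theorem \cite{Ch} for curves (equivalently, the tame case of the `strong' formula, Theorem~\ref{FormulationMainTheorem}) together with the relation $d\circ e = c$ in the $cde$-triangle.

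First I would set up the reduction. The Cartan pairing
\[\langle\ ,\ \rangle: K_0(\bar{\FF}_p[G])\times K_0(G,\bar{\FF}_p)\to\ZZ, \qquad \langle[P],[M]\rangle=\dim_{\bar{\FF}_p}\Hom_{\bar{\FF}_p[G]}(P,M),\]
is perfect — in the bases given by the projective covers of the simple modules, respectively by the simple modules, its matrix is the identity — and $\mathrm{Ind}_H^G$ is left adjoint to $\mathrm{Res}^G_H$ with respect to it. By Artin's induction theorem for the Grothendieck group $K_0(\bar{\FF}_p[G])$ of projectives (see \cite{Se2}), the classes induced from cyclic subgroups of order prime to~$p$ span $K_0(\bar{\FF}_p[G])_\QQ$; hence an element of $K_0(G,\bar{\FF}_p)_\QQ$ vanishes as soon as all its restrictions to such subgroups vanish. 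Now both sides of the identity are compatible with restriction: decomposition maps commute with restriction and $\mathrm{Res}^G_C E(G,X)=E(C,X)$ (as recalled in the introduction), so that $\mathrm{Res}^G_C d(E(G,X))=d(E(C,X))$; and $\mathrm{Res}^G_C\chi(G,\bar{X},\cO_{\bar{X}})=\chi(C,\bar{X},\cO_{\bar{X}})$, since the $C$-action on $H^i(\bar{X},\cO_{\bar{X}})$ is just the restriction of the $G$-action. Thus it suffices to prove the theorem with such a $C$ in place of $G$, and we may assume that $G$ is cyclic of order prime to~$p$.

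In that case every inertia group of $\pi: X\to Y$ is a subgroup of $G$, hence of order prime to~$p$, so $\pi$ is tamely ramified everywhere, $\bar{X}^\mathrm{w}=\emptyset$, the divisor $\bar{D}^\mathrm{w}$ is trivial and $\cO_{\bar{X}}(\bar{D}^\mathrm{w})=\cO_{\bar{X}}$; in particular $\psi(G,\bar{X})$ is defined and $c(\psi(G,\bar{X}))=\chi(G,\bar{X},\cO_{\bar{X}})$ by \cite{Ko}. Chinburg's theorem \cite{Ch}, applied to the tame cover $\bar{X}\to\bar{Y}$, then gives $E(G,X)=e(\psi(G,\bar{X}))$ in $K_0(\bar{\QQ}_p[G])_\QQ$ — this is exactly the tame case of the `strong' formula (Theorem~\ref{FormulationMainTheorem}), in which the weak-ramification hypothesis is automatic and the correction sum over $\bar{Y}^\mathrm{w}$ is empty. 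Applying the decomposition map and the relation $d\circ e=c$ of the $cde$-triangle yields
\[d(E(G,X))=d\bigl(e(\psi(G,\bar{X}))\bigr)=c(\psi(G,\bar{X}))=\chi(G,\bar{X},\cO_{\bar{X}}),\]
which is the claimed identity; and since the right-hand side is the class of an actual $\bar{\FF}_p[G]$-module, the supplementary statement that $d(E(G,X))$ lies in the integral part $K_0(G,\bar{\FF}_p)$ is immediate.

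I expect the main work to be bookkeeping rather than anything conceptual. One must pin down the normalisations: the passage between the Artin and the Grothendieck $L$-function (hence between $V$ and $V^*$) and between geometric and arithmetic Frobenius, the precise form in which Chinburg's result is available for curves over a finite field, and the compatibility of $\varepsilon(V)$ with restriction to subgroups (which rests on Frobenius reciprocity for induced $L$-functions and the product decomposition of $\varepsilon$-constants). A second point to verify is the exact shape of the modular Artin theorem invoked, namely that cyclic subgroups of order prime to~$p$ — not merely $p$-elementary subgroups — suffice after tensoring with~$\QQ$; should only $p$-elementary subgroups $C\times Q$ (with $Q$ a $p$-group) be available, one adds the observation that the simple $\bar{\FF}_p[C\times Q]$-modules, and hence the relevant Grothendieck groups, are inflated from $C$, which returns us to the order-prime-to-$p$ cyclic case.
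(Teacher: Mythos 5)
Your proposal is correct and follows essentially the same route as the paper: non-degeneracy of the Cartan pairing plus modular Artin induction reduces to cyclic subgroups of order prime to $p$, whereupon the cover is tame, Chinburg's theorem gives $E = e(\psi)$, and $d\circ e = c$ finishes. The paper phrases the reduction as pairing against inductions of projectives and invoking Frobenius reciprocity, while you phrase it as joint injectivity of restriction on $K_0(G,\bar{\FF}_p)_\QQ$ — these are dual and equivalent; your concern about $p$-elementary versus cyclic subgroups is moot since the paper's cited form of modular Artin induction (\cite[Th\'eor\`eme~40]{Se2}) already uses one-dimensional \emph{projective} representations of cyclic subgroups, which forces order prime to $p$.
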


\begin{proof}
As the canonical pairing
$\langle \hspace{1em}, \hspace{1em} \rangle: K_0(\bar{\FF}_p[G]) \times K_0(G, \bar{\FF}_p) \rightarrow \ZZ$
(see the beginning of the proof of Theorem~\ref{MainFormula}) is non-degenerate as well \cite[\S 14.5(b)]{Se2}, it suffices to show that
\begin{equation}\label{Characterequation}
\langle P, d(E(G,X)) \rangle = \langle P, \chi(G,\bar{X}, \cO_{\bar{X}}) \rangle
\end{equation}
for all finitely generated projective $\bar{\FF}_p[G]$-modules $P$. By Artin's induction theorem for modular representation theory \cite[Th\'eor\`eme~40]{Se2}, every element in $K_0(\bar{\FF}_p[G])_\QQ$ can be written as a rational linear combination of representations induced from one-dimensional projective representations of cyclic subgroups of~$G$. Furthermore, by Frobenius reciprocity, both sides of (\ref{Characterequation}) are invariant under induction with respect to $P$. As in the proof of Theorem~\ref{FormulationMainTheorem}, we may therefore assume that $G$ is cyclic and that $P$ corresponds to a character $\chi:G \rightarrow \bar{\FF}_p^\times$. The fact that $P$ is projective moreover implies that $p$ does not divide the order of $G$. In particular, the projection $\pi$ is tamely ramified and we conclude from Theorem~\ref{FormulationMainTheorem} (or actually already from Theorem~5.2 in \cite{Ch}) that $E(G,X) = e (\psi(G, \bar{X}))$. We therefore have
\[d(E(G,X)) = d(e(\psi(G, \bar{X}))) = c( \psi(G, \bar{X})) = \chi(G,\bar{X}, \cO_{\bar{X}}),\]
as was to be shown.
\end{proof}

\begin{rmk}{\em
If $\pi$ is weakly ramified, Theorem~\ref{WeakTheorem} can also be derived from Theorem~\ref{FormulationMainTheorem}. Details are contained in the first arXiv version of this article.
}\end{rmk}

We end with the following problem.

\begin{problem} {\em
Describe $E(G,X)$ within $K_0(\bar{\QQ}_p[G])_\QQ$ in terms of global geometric invariants of $\bar{X}$ in a way that generalises Theorem~\ref{FormulationMainTheorem} from the weakly ramified situation to the general situation considered in this section.
}\end{problem}

\vspace*{1em}

\begin{center}
{\bf \large Appendix\\
\vspace*{0.2em}
Integrality of epsilon representations\\ of wildly ramified Galois covers}\\
\vspace*{0.5em}
{\sc Bernhard K\"ock} and {\sc Adriano Marmora}
\end{center}
\nopagebreak \vspace{1em}\nopagebreak
Let $q=p^f$ with $f\ge1$, let $X$ be a geometrically irreducible smooth projective curve over $\FF_q$ and let $G$ be a finite subgroup of $\mathrm{Aut}(X/\FF_q)$. Theorem~4.2 \pagebreak states that the corresponding  epsilon representation $E(G,X)$ (introduced in Definition 5.1) is integral, i.e.\ belongs to the lattice $K_0(\bar{\QQ}_p[G])$ in $K_0(\bar{\QQ}_p[G])_\QQ$, if the canonical projection $\pi:X \rightarrow Y:=X/G$ is at most weakly ramified. Theorem~5.2 states that its image $d(E(G,X))$ in $K_0(G,\bar{\FF}_p)_\QQ$ {\em always} belongs to the lattice $K_0(G,\bar{\FF}_p)$.

In this appendix we study the question to which extent is $E(G,X)$ integral in the case $\pi$ is arbitrarily wildly ramified. We will show that $E(G,X)$ becomes integral after multiplying it by $2$ if $p=2$ and after multiplying it by $p-1$ if $p\ge3$, see Theorem~A.2. Furthermore we will give an example of a Galois cover~$\pi$ such that the denominator $p-1$ does occur in $E(G,X)$, see Proposition~A.3, and an another one such that the denominator $2$ does occur in $E(G,X)$ if $p=2$, see end of this appendix.\\

To begin with, we temporarily fix  a multiplicative character $\chi:G \rightarrow \bar{\QQ}^\times$ and consider the corresponding local epsilon constant $\varepsilon(\chi_{\tilde{\fq}}, \psi_\fq, {\rm d}x_\fq)$ at some $\fq \in Y$ (where $\chi_{\tilde{\fq}}:G_{\tilde{\fq}} \rightarrow \bar{\QQ}^\times$, $\psi_\fq : K_\fq \rightarrow \bar{\QQ}^\times$ and ${\rm d}x_\fq$ are defined as in Section~2, see the proof of and the two paragraphs before Theorem~2.3).  We recall that the Artin conductor $\mathrm{a}(\chi_{\tilde{\mathfrak{q}}})$ of $\chi$ at $\fq$ is the smallest integer $i \ge 0$ such that $\chi$ vanishes on the higher ramification group $G_{\tilde{\fq}}^i$ in upper numbering for one (any) point $\tilde{\fq} \in X$ above $\fq$. The following lemma generalises Lemma~2.6.\\

\noindent{\bf Lemma A.1}. {\em If $\mathrm{a}(\chi_{\tilde{\fq}}) \ge 2$ (i.e., $\chi$ is not tamely ramified at $\fq $), we have
\[\varepsilon(\chi_{\tilde{\fq}}, \psi_\fq, {\rm d}x_\fq)^2 \sim q^{\mathrm{a}(\chi_{\tilde{\fq}})};\]
in particular, the $p$-adic valuation $v_p(j_p(\varepsilon(\chi_{\tilde{\fq}}, \psi_\fq, {\rm d}x_\fq)))$ does not depend on the embedding $j_p:\bar{\QQ} \hookrightarrow \bar{\QQ}_p$ and is equal to $f \,\mathrm{a}(\chi_{\tilde{\fq}})/2$.}

\begin{proof}
If $p\geq 3$, this follows from Proposition~8.7(ii) in \cite{AbbesSaito}. To explain this and to be able to use the notation introduced in {\itshape loc.~cit.}, we temporarily redefine $\psi:=\psi_\fq$ and $\chi:= \underline{\chi}_{\tilde{\fq}}$ (where $\underline{\chi}_{\tilde{\fq}}:K_\fq^\times \rightarrow \bar{\QQ}^\times$ is defined as in the proof of Theorem~2.3). Using further notation introduced in {\itshape loc.~cit.}, we have $n+1=\mathrm{sw}(\chi)+1=\mathrm{a}(\chi)$,
$G^2_{\psi_k}\sim q$  (by {\itshape loc.~cit.}~(8.5.3)) and $\mathrm{ord(c)}=-\mathrm{a}(\chi)$ (by {\itshape loc.~cit.}~Section~8.6 and Proposition~8.7(ii); note that $\mathrm{ord}(\psi)=0$ by our assumption on $\psi$). Hence {\itshape loc.~cit.}~(8.7.3) implies
\begin{align*}
&\varepsilon(\chi,\psi)^2=\varepsilon_0(\chi,\psi)^2 \\
&\hspace*{3.4em} =\chi(c)^{-2}\psi(c)^2 q^{-2\mathrm{ord}(c)}(G^2_{\psi_k})^{-\mathrm{a}(\chi)}
 \sim q^{-2\mathrm{ord}(c)-\mathrm{a}(\chi)}=q^{\mathrm{a}(\chi)},
\end{align*}
as was to be shown.

If  $p=2$, Lemma~A.1 follows from Theorem~48.4 on p.~301 in \cite{Bushnell_Henniart}; note that $\varepsilon(\rho, 1/2, \psi) = \varepsilon(\rho, \psi) q^{-\frac{\mathrm{a}(\chi)}{2}}$ in {\em loc.\ cit.} since $\mathrm{ord}(\psi)=0$.
\end{proof}

\noindent {\bf Theorem A.2}.
The epsilon representation $E(G,X)$ belongs to $\frac{1}{2} K_0(\bar{\QQ}_p[G])$ if $p=2$ and to $\frac{1}{p-1}K_0(\bar{\QQ}_p[G])$ if $p\ge3$.

\begin{proof} By definition of $E(G,X)$, we need to show that, for every complex representation $V$ of $G$, the valuation $v_p(j_p(\varepsilon(V)))$ belongs to $\frac{1}{2} \ZZ$ if $p=2$ and to $\frac{1}{p-1} \ZZ$ if $p \ge 3$. As in the  proof of Theorem~4.2, using Brauer's induction theorem, we may assume that $V$ corresponds to a multiplicative character~$\chi$ of $G$.
As $\chi$ factorises via~$G^\mathrm{ab}$, we may furthermore assume that $G$ is abelian. By using Deligne's product formula, similarly to Corollary~2.7, we derive from Theorem~2.3 and Lemmas 2.4 and 2.5 the relation
\begin{equation}\label{EpsilonConstantFormulaAgain}
\varepsilon(\chi) \sim q^{g_Y -1} \cdot \prod \tau(\chi_{k(\fq)}) \cdot \prod \varepsilon(\chi_{\tilde{\fq}}, \psi_\fq, {\rm d}x_\fq)
\end{equation}
where the first product runs over all $\fq \in Y$ such that $\chi$ is tamely ramified at $\fq$ and the second product runs over all $\fq \in Y$ such that $\chi$ is not tamely ramified at $\fq$. By Stickelberger's formula (see the version given in the proof of Theorem~1.3), the valuations of the Gauss sums $\tau(\chi_{k(\fq)})$ belong to $\frac{1}{p-1}\ZZ$ and, by Lemma~A.1, the valuations of the local epsilon constants in the second product belong to $\frac{1}{2}\ZZ$. This proves Theorem~A.2 both when $p=2$ and when~$p \ge 3$.
\end{proof}

\begin{center}{{\bf First Example}} \end{center}

Over the next two or three pages, we construct an example of a Galois cover of curves, $\pi:X \rightarrow \PP^1_{\FF_q}$, for which the denominator $p-1$ does occur in $E(G,X)$. Let $L_\mathrm{t} =\FF_q(u)$ be the Kummer extension of the field of rational functions $\FF_q(t)$ obtained by adjoining a primitive $(q-1)^\textrm{th}$ root $u$ of $t$. Let $L_\mathrm{w}$ be the Artin-Schreier extension of $\FF_q(t)$ obtained by adjoining a root of the polynomial $T^p-T-t$. Finally, let $L$ denote the compositum of $L_\mathrm{t}$ and $L_\mathrm{w}$, i.e., the Artin-Schreier extension of $L_\mathrm{t}=\FF_q(u)$ obtained by adjoining a root of the polynomial $T^p-T-u^{q-1}$. Then $L_\mathrm{t}$, $L_\mathrm{w}$ and $L$ are cyclic Galois extensions of $\FF_q(t)$ with Galois groups $\FF_q^\times$, $\FF_p$ and $G:= \FF_q^\times \times \FF_p$, respectively. We now define $\pi: X \rightarrow \PP^1_{\FF_q}$ to be the Galois cover of smooth projective curves corresponding to the extension $L|\FF_q(t)$. It follows from Example~2.5 in \cite{Ko} that $\pi$ is not weakly ramified at $\infty$ if $q >2$.

We fix a non-trivial character $\varphi: \FF_p \rightarrow \bar{\QQ}_p^\times$ and let $\omega: \FF_q^\times \rightarrow \bar{\QQ}_p^\times$ denote the Teichm\"uller character. We write $\varphi$ and $\omega$ also for the characters of $G$ obtained by composing with the canonical projections $G\rightarrow \FF_p$ and $G \rightarrow \FF_q^\times$, respectively. The characters $\omega^i\varphi^j$, $i=0, \ldots, q-2$, $j=0, \ldots, p-1$, then form a complete system of irreducible representations of $G$ and hence a $\ZZ$-basis of $K_0(\bar{\QQ}_p[G])$.\\

\noindent {\bf Proposition A.3}. {\em We have
\[E(G,X)= -\sum_{i=1}^{q-2} \frac{s_p(i)}{p-1} \sum_{j=1}^{p-1} [\omega^i\varphi^j] \quad \textrm{ in } \quad K_0(\bar{\QQ}_p[G])_\QQ\]
where $s_p(i)$ denotes the sum of $p$-adic digits of $i$. In particular, the coefficient of $[\omega\varphi^j]$ is equal to $-\frac{1}{p-1}$ for every $j \in \{1, \ldots, p-1\}$.
}

\begin{proof} For brevity, we write $\chi^{i,j}$ for the unique character $G \rightarrow \bar{\QQ}^\times$ such that $j_p \circ \chi^{i,j} = \omega^i\varphi^j$. It suffices to show that the formula in Proposition~A.3 is true after pairing it with $[\omega^i\varphi^j]$ for $i=0, \ldots, q-2$, $j=0, \ldots, p-1$. By definition of $E(G,X)$, this amounts to showing the equation
\begin{equation}\label{ExamplePAdicValuation}
v_p(j_p(\varepsilon(\chi^{i,j}))) = \left\{\begin{aligned}
&0 && \textrm{ if } i=0 \textrm{ or } j=0,\\
&\frac{s_p(i)}{p-1} && \textrm{ else}.\end{aligned}\right.
\end{equation}
It is known and easy to see that the cover $\pi$ is ramified exactly over $0$ and $\infty$ with decomposition groups $\FF_q^\times$ and $G$, respectively. Similarly to formula~(\ref{EpsilonConstantFormulaAgain}) we therefore obtain
\[\varepsilon(\chi^{i,j}) \sim q^{-1}\; \tau(\chi^{i,j}_{k(0)}) \;\varepsilon(\chi^{i,j}_{\tilde{\infty}}, \psi_\infty, {\rm d}x_\infty) .\]

It follows from Satz~V(3.4) in \cite{Ne} (see also the proof of Proposition~\ref{composition}) that the homomorphism $\FF_q^\times = k(0)^\times \rightarrow G_{\tilde{0}}=\FF_q^\times$ induced by the norm residue symbol is the identity map. The Gauss sum $\tau(\chi^{i,j}_{k(0)})$ is therefore the Gauss sum corresponding to the character $\omega^i$ of $\FF_q^\times$ if $i\ \in \{1, \ldots, q-2\}$ and trivial if $i=0$. Hence its $p$-adic valuation is  $\frac{s_p(i)}{p-1}$ by Stickelberger's formula (see the version given in the proof of Theorem~1.3).

If $j=0$, the character $\chi^{i,j}$ is tamely ramified also above $\infty$. The homomorphism $\FF_q^\times= k(\infty)^\times \rightarrow G_{\tilde{\infty},0}/G_{\tilde{\infty},1} =\FF_q^\times$ induced by the norm residue symbol is the inverse map. Similarly to the previous paragraph, we therefore obtain $v_p(j_p(\varepsilon(\chi^{i,0}_{\tilde{\infty}}, \psi_\infty, {\rm d}x_\infty)))= v_p(j_p(\tau(\chi^{i,0}_{k(\infty)}))) = \frac{s_p(q-1-i)}{p-1}$. Using the elementary and easy-to-verify formula $s_p(i) + s_p(p^f-1-i) = (p-1)f$, we finally obtain $v_p(j_p(\varepsilon(\chi^{i,j}))) =0$ if $j=0$, as claimed.

In the case $j \in \{1, \ldots, p-1\}$, we first note that, above $\infty$, the first ramification group is $\FF_p$ and the smallest integer $i$ such that the $i^\textrm{th}$ higher ramification group of the sub-extension $L_\mathrm{w}/\FF_q(t)$ in lower or upper numbering vanishes is $2$ (see Example~2.5 in \cite{Ko}).
Hence, by Herbrand's Theorem (see Satz~II(10.9) in \cite{Ne}), the Artin conductor of $\chi^{i,j}$ at~$\infty$ is equal to~$2$ and the valuation of the local epsilon constant at $\infty$ is equal to $f$ by Lemma~A.1. Putting everything together we obtain $v_p(j_p(\varepsilon(\chi^{i,j}))) = \frac{s_p(i)}{p-1}$, as claimed.
\end{proof}

\textbf{{Remark A.4}}. The example constructed above has the following natural sheaf-theoretic interpretation and has actually been found this way. This interpretation moreover allows a cohomological proof of Proposition~A.3 which in particular is of geometric and global nature and does not use the product formula for epsilon constants. 
Although the rigid cohomology would be even more natural, we explain all this via the \'etale $\ell$-adic cohomology, for it is more widely known. We refer the reader to the \emph{expos\'e} [Sommes trig.] in \cite{DeligneSGAfour_and_half} or to \cite{Mi} for more details.

Let $\ell$ be a prime different from $p$ and let $j_\ell\colon\bar{\QQ} \hookrightarrow \bar{\QQ}_l$ be an embedding.
The character $\omega\colon \FF_q^\times \rightarrow \bar{\QQ}_p^\times$ (resp.
$\varphi\colon \FF_p \rightarrow \bar{\QQ}_p^\times$) factors through $j_p\colon\bar{\QQ} \hookrightarrow \bar{\QQ}_p$, so it makes sense to consider its composition with $j_\ell$,
which defines
a rank-one smooth $\ell$-adic sheaf   $\mathcal{K}$ on $\GG_m$ (resp., $\mathcal{L}$ on $\AA^1_{\FF_q}$), trivialized by the Kummer (resp., Artin--Schreier) covering.
By construction
$\mathcal{K}$ is tame at $0$ and $\infty$ and $\mathcal{L}$  is ramified at $\infty$ with Artin conductor $2$.
For $i \in \{0, \ldots, q-2\}$ and $j \in \{0, \ldots, p-1\}$, we set $\mathcal{M}^{i,j}:= \mathcal{K}^{\otimes i} \otimes \mathcal{L}^{\otimes j}|_{\GG_m}$ which is a smooth sheaf on $\GG_m$.
Then $\pi:X \rightarrow \PP^1_{\FF_q}$ is the smallest Galois cover of $\PP^1_{\FF_q}$, \'etale over $\GG_m$, such that  $\pi_{|\GG_m}^*\mathcal{M}^{1,1}$ is trivial.

Let us prove (\ref{ExamplePAdicValuation}).  The epsilon constant $j_\ell(\varepsilon(\chi^{i,j}))$ is equal to the global epsilon constant of the $\ell$-adic sheaf $u_*(\mathcal{M}^{i,j})$ where $u: \GG_m \hookrightarrow \PP^1_{\FF_q}$ denotes the inclusion, see~VI.13.6 in \cite{Mi}.
We first consider the main case $i\in \{1, \ldots, q-2\}$ and $j \in \{1, \ldots,  p-1\}$. We then have $u_*(\mathcal{M}^{i.j}) = u_!(\mathcal{M}^{i,j})$ because the character~$\omega^i\varphi^j$ is not trivial and ramified at $0$ and $\infty$, so
\[\varepsilon(u_*(\mathcal{M}^{i,j})) = \det (-F, H^*_\textrm{\'et,c}(\GG_m \otimes \bar{\FF}_q, \mathcal{M}^{i,j}))^{-1}\]
where $F$ denotes the geometric Frobenius relative to $\FF_q$ on $\GG_m \otimes \bar{\FF}_q$, see Theorem~VI.13.3 in \cite{Mi}. The $\ell$-adic cohomology group $H^k_\textrm{\'et,c}$ vanishes if $k \not= 1$ and has dimension  one if $k=1$ by the Grothendieck-Ogg-Shafarevic formula.
Hence the determinant of $-F$ is just the trace of $-F$ on $H^1_\textrm{\'et,c}$ which, by the Grothendieck trace formula 
is exactly the Gauss sum corresponding to the character~$\omega^i$. Formula~(\ref{ExamplePAdicValuation}) now follows from Stickelberger's formula as in the proof above. In the cases $i\not= 0, j=0$ and $i=0, j\not=0$, formula~(\ref{ExamplePAdicValuation}) follows from the fact that the $\ell$-adic cohomology groups $H^k_\textrm{\'et,c}(\GG_m \otimes \bar{\FF}_q, \mathcal{K}^{\otimes i})$ and $H^k_\textrm{\'et,c}(\AA^1_{\FF_q} \otimes \bar{\FF}_q, \mathcal{L}^{\otimes j})$, respectively, vanish for {\em all} $k$, again by the Grothendieck-Ogg-Shafarevic formula. Finally, the case $i=0, j=0$ is trivial.\\

{\bf Remark A.5}. Proposition~A.3 also shows that the class $j_p^{-1}(E(G,X)) \in K_0(\bar{\QQ}[G])_\QQ$ depends on $j_p$ (if $q > 3$); in other words, $E(G,X)$ does not descend to a canonical epsilon representation inside $K_0(\bar{\QQ}[G])_\QQ$. Indeed, if $j'_p:\bar{\QQ} \hookrightarrow \bar{\QQ}_p$ is another embedding, then $j'_p j_p^{-1}$ sends $[\omega]$ to $[\omega^{i'}]$ for some $i'$ coprime to $q-1$. If $q > 3$, it is easy to construct a $j'_p$ such that $s_p(i') \not= s_p(1) =1$ and hence that $j_p^{-1}(E(G,X)) \not= (j'_p)^{-1}(E(G,X))$.\\

\begin{center} {\bf Second Example} \end{center}

We finally construct a Galois cover $\pi: X \rightarrow \PP^1_{\FF_2}$ for which the denominator $2$ does occur in $E(G,X)$. Let $X$ be the elliptic curve given by the non-singular Weierstrass equation $v^2+v = u^3+u^2$ over $\FF_2$. We now define $\pi: X \rightarrow \PP^1_{\FF_2}$ to be the cover corresponding to the field extension $\FF_2(u,v)/\FF_2(t)$ where $t=u^2+u$. Thus, $\FF_2(u,v)/\FF_2(u)/\FF_2(t)$ is a tower of Artin-Schreier extensions. For the reader with the relevant background we mention that this tower can be seen to be associated with the Artin-Schreier-Witt extension corresponding to the equation $(u,v)^F - (u,v) = (t,0)$ of Witt vectors of length $2$ where $F$ denotes the Frobenius. (This second example has actually been found this way.) We see that $\pi$ is of degree four and that the Galois group~$G$ is generated by the automorphism~$\sigma$ of order four given by $\sigma(u) = u+1$ and $\sigma(v) = v + u +1$. Furthermore, $\pi$ is unramified above $\AA^1_{\FF_2} \subset \PP^1_{\FF_2}$ and totally ramified above $\infty \in \PP^1_{\FF_2}$.
For the unique point $\widetilde{\infty} \in X$ above $\infty$ we then have $\mathrm{ord}_{\widetilde{\infty}}(u) = -2$ and hence $\mathrm{ord}_{\widetilde{\infty}}(v) = -3$. Therefore, $\pi:= vu^{-2}$ is a uniformiser at $\widetilde{\infty}$ and we compute
\[\mathrm{ord}_{\widetilde{\infty}}(\sigma^2(\pi) - \pi) = \mathrm{ord}_{\widetilde{\infty}}((v+1)u^{-2} - vu^{-2}) = \mathrm{ord}(u^{-2}) = 4\]
and
\begin{align*}\mathrm{ord}_{\widetilde{\infty}}&(\sigma(\pi) - \pi) =  \mathrm{ord}_{\widetilde{\infty}}\left(\frac{v+u+1}{(u+1)^2} - \frac{v}{u^2}\right) \\
&=\mathrm{ord}_{\widetilde{\infty}}\left(\frac{u^3+u^2+v}{(u+1)^2 u^2}\right) = -6-(-4)-(-4) =2.
\end{align*}
For the ramification groups at $\widetilde{\infty}$ we therefore obtain
\[G_0 = G_1 = \langle \sigma \rangle, \quad G_2 = G_3 = \langle \sigma^2 \rangle, \quad G_4=G_5= \ldots = \langle \mathrm{id} \rangle\]
and hence
\[G^1= \langle \sigma \rangle, \quad G^2 = \langle \sigma^2 \rangle, \quad G^3 = G^4 = \dots = \langle \mathrm{id} \rangle.\]
Now let $\chi: G \hookrightarrow \bar{\QQ}^\times$ be a faithful character. Then $\mathrm{a}(\chi_{\widetilde{\infty}})=3$ and hence
$\varepsilon(\chi_{\widetilde{\infty}}, \psi_\infty, \mathrm{d}x_\infty)^2 \sim 2^3$
by Lemma~A.1; thus, the $2$-adic valuation $v_2(j_2(\varepsilon(\chi_{\widetilde{\infty}}, \psi_\infty, \mathrm{d}x_\infty)))$ is equal to $\frac{3}{2} \not\in \ZZ$. As in Theorem~A.2 and Proposition~A.3, this implies that $E(G,X)$ does not belong to $K_0(\bar{\QQ}_2[G])$.\\

{\bf Remark A.6}. Quicker, but less explicit and building on more theory, is the following approach to showing the existence of an example such that $E(G,X) \not\in K_0(\bar{\QQ}_2[G])$. Pick a local character $\rho_\infty: G_{\FF_2((t))} \rightarrow \bar{\QQ}^\times$ whose image is finite and whose Artin conductor is odd and bigger than one. (This can be done explicitly as above or using local class field theory.) Then the Katz-Gabber local-to-global extension \cite{Ka} gives a smooth $\ell$-adic sheaf $\mathcal{L}$ ($\ell \not= 2$) on $\GG_m/\FF_2$ which is tame at $0$ and has monodromy $\rho_\infty \otimes \QQ_l$ at~$\infty$. In particular, $\mathcal{L}$ has finite global monodromy. Now, as in Remark~A.4, we define~$\pi: X \rightarrow \PP^1_{\FF_2}$ as the smallest Galois cover which, over $\GG_m$, is \'etale and such that $\pi_{|{\GG_m}}^*(\mathcal{L})$ is trivial. Finally, by using the product formula as above, one shows that $E(G,X) \not\in K_0(\bar{\QQ}_2[G])$. (Note that, since $p=2$ and since $\mathcal{L}$ is tame at $0$, the $2$-adic valuation of the local epsilon constant at $0$ is integral.)

{\footnotesize }

\Addresses
\end{document}